\newtheorem{theorem}{Theorem}
\newtheorem{corollary}{Corollary}
\newtheorem{lemma}{Lemma}
\newtheorem{definition}{Definition}
\newtheorem{example}{Example}
\newtheorem{condition}{Condition}
\newtheorem{assumption}{Assumption}
\newtheorem{remark}{Remark}
\DeclareMathOperator*{\argmin}{arg\,min}
\DeclareMathOperator*{\argmax}{arg\,max}
\newcommand{\rdec}[1]{{\color{blue}#1}}
\newcommand{\rdec}[1]{#1}
\newcommand{\revq}[1]{{\color{blue}#1}}
\newcommand{\revq}[1]{#1}
\newcommand{\newt}[1]{{\color{blue}#1}}
\newcommand{\newt}[1]{#1}
\newcommand{\com}[1]{\textbf{\color{red}(COMMENT: #1)}} 
\newcommand{\clar}[1]{\textbf{\color{green}(NEED more work: #1)}}
\newcommand{\com}[1]{}
\newcommand{\clar}[1]{}
\newcommand{\comc}[1]{\textbf{\color{red}(COMMENT: #1)}} 
\newcommand{\comc}[1]{}
\newcommand{\aug}[1]{{\color{blue}#1}}
\newcommand{\aug}[1]{#1}
\begin{document}
\title{Learning of Uncontrolled Restless Bandits with Logarithmic Strong Regret}
\author{\IEEEauthorblockN{Cem Tekin,~\IEEEmembership{Member,~IEEE}, Mingyan Liu,~\IEEEmembership{Senior Member,~IEEE}\\
\thanks{C. Tekin is with the Electrical Engineering Department, University of California, Los Angeles, CA, USA, cmtkn@ucla.edu. M. Liu is with the Electrical Engineering and Computer Science Department, University of Michigan, Ann Arbor, MI 48105, USA, mingyan@eecs.umich.edu. Part of this work was done when C. Tekin was a graduate student in the Electrical Engineering and Computer Science Department, University of Michigan.
A preliminary version of this work appeared in Allerton 2011. The work is partially supported by the NSF under grants CIF-0910765 and CNS-1217689, and by the ARO under Grant W911NF-11-1-0532.  }
}}

\maketitle

\begin{abstract}
In this paper we consider the problem of learning the optimal \rdec{dynamic} 
policy for uncontrolled restless bandit problems. In an uncontrolled restless bandit problem, there is a finite set of arms, each of which when \rdec{played} yields a \rdec{non-negative} reward. There is a player who sequentially selects one of the arms at each time step. The goal of the player is to maximize its undiscounted reward over a time horizon $T$. The reward process of each arm is a finite state Markov chain, whose transition probabilities are unknown to the player. State transitions of each arm is independent of the player's actions, thus ``uncontrolled''. 
We propose a learning algorithm with \newt{near-logarithmic regret} uniformly over time with respect to the \rdec{optimal (dynamic) finite horizon policy, referred to as strong regret, to contrast with commonly studied notion of weak regret which is with respect to the optimal (static) single-action policy}. \newt{We also show that when an upper bound on a function of the system parameters is known, our learning algorithm achieves logarithmic regret.} 
Our results extend the literature on optimal adaptive learning of Markov Decision Processes (MDPs) to Partially Observed Markov Decision Processes (POMDPs). \newt{Finally, we provide numerical results on a variation of our proposed learning algorithm and compare its performance and running time with other bandit algorithms.}
\end{abstract}

\begin{IEEEkeywords}
Online learning, restless bandits, POMDPs, regret, exploration-exploitation tradeoff
\end{IEEEkeywords}

 \section{Introduction} \label{sec:introduction}

In an {\em uncontrolled restless bandit problem} (URBP) there is a
set of arms indexed by $1,2,\ldots,K$, whose state process is
discrete and follows a discrete time Markov rule independent of each other. There is a user/player who
chooses one arm at each of the discrete time steps, gets a reward dependent on the state of the arm, and observes
the current state of the selected arm. The control action, i.e., the arm
selection, does not affect the state transition, therefore the underlying system dynamics is uncontrolled. 
{However, judicious arm selections allow the player to obtain high instantaneous reward (exploitation) and decrease the uncertainty about the current state of the system (exploration), and the key in designing an optimal policy lies in a good balance between exploration and exploitation. } 

If the structure of the system, i.e., the state transition
probabilities and the rewards of the arms are known, then the
optimal policy for an infinite horizon problem can be found by using dynamic programming. 
In the case of infinite horizon with discounted reward, stationary
optimal policies can be found by using contraction properties of the dynamic programming operator.
For the infinite horizon average reward case, stationary optimal policies can be found 
under some assumptions on the transition probabilities \cite{platzman1980, hsu2006}. 

In this paper, rather than finding the optimal policy given the structure of the system (referred to as the optimization problem), we consider the {\em learning} version of the problem, where we assume that initially the player has no knowledge on the transition probabilities of the arms. 
%
This type of learning problem arise in many applications. Examples include sequential channel selection in a multi-channel wireless system where a user initially has no information on the channel statistics, and target tracking where initially the statistics of the target's movement is unknown. 
Our goal is to design learning algorithms with the fastest convergence rate, i.e., the minimum regret, where the {\em regret}  of a learning policy at time $t$ is defined as the difference between the total undiscounted reward of
the optimal dynamic policy for the finite $t$-horizon undiscounted reward problem given full statistics of the system model, and that of the learning policy up to time $t$.  
{It should be noted that this is a form of {\em strong regret}, as the comparison benchmark is the optimal dynamic policy, the best causal policy that can be achieved given full statistics of the system.  By contrast, a much more commonly used performance criterion is the {\em weak regret}, which is the difference between a learning policy and the best {\em single-action policy}, a static policy that always plays the same arm.  Also, to determine the best single-action policy one does not need to know the full statistics of the system, but only the average reward of each arm.  For simplicity throughout the paper, the term {\em regret} refers to the {\em strong regret} defined above. 
} 

In this paper, we show that when the transition probability between any two states of the same arm is always positive, \newt{and when the player knows an upper bound on a function of the system parameters, which we will define later,}
an algorithm with logarithmic regret uniform in time for the finite horizon undiscounted reward problem exists. \newt{If such a bound is not known, we show that near-logarithmic regret can be achieved.}
\newt{We would like to note that the iid bandit problem where the rewards for arm $k$ are drawn from a distribution $f_k$ with finite support (such as the Bernoulli bandit problem) is a special case of the URBP. Since it is proven in \cite{lairobbins1985} that iid bandits have a logarithmic lower bound on the regret, this bound also holds for URBP.}

\newt{To the best of our knowledge, this is the first attempt to extend optimal adaptive learning in Markov decision processes (MDPs) to partially observable Markov decision processes (POMDPs), of which the URBP is a sub-class, where there are an uncountably infinite number of information states.
A parallel work \cite{ortner2012regret} considered the idea of state aggregation to solve URBP, and proved a $O(\sqrt{T})$ lower bound on the regret. This bound does not contradict our $O(\log T)$ bound since it is derived for an adversarially selected problem instance. In contrast, the problem instance (including the states, rewards and transition probability matrices of the arms) is fixed in our case, and the constant in the regret that multiplies the logarithmic time order depends on these parameters. Therefore our $O(\log T)$ bound is instance dependent.  Similarly, it is known from \cite{auer2003nonstochastic} that even in the iid bandit problem, for any time horizon $T$ there exists a set of reward distributions such that no algorithm can achieve a regret bound better than $O(\sqrt{T})$, while logarithmic regret upper bounds dependent on instance-specific constants have been proved by many existing works (see, e.g., \cite{auerbianchi2002, anantharamvaraiya1987-1}) in the iid setting.  
}

The remainder of this paper is organized as follows. Related work is given in Section \ref{single:sec:strong-related}. In
Section \ref{single:sec:strong-probform}, we present the problem formulation, notations and some preliminaries, 
including an equivalent countable representation of the information state. 
Then in Section \ref{single:sec:strong-alg} we present a learning algorithm, followed by a finite partition of the information states in Section \ref{single:sec:defnfinpar}, which is used to bound the strong regret of the learning algorithm. 
We analyze the regret of this learning algorithm in Section \ref{single:sec:strong-analysis} and prove that it increases logarithmically in time.  
We then introduce two variants of the learning algorithm aimed at relaxing certain assumptions, in Sections \ref{sec:extensions_explore} and \ref{sec:extensions_finpar}, respectively. \newt{Extensive numerical results on the performance of our online learning algorithms and comparison with other online learning algorithms in prior works is given in Section \ref{sec:num}.}
Section \ref{sec:conclusion} concludes the paper. 

\section{Related Work} \label{single:sec:strong-related}
Related work on multi-armed bandit problems started with the seminal paper
by Lai and Robbins \cite{lairobbins1985}, where asymptotically optimal adaptive
policies for arms with iid reward processes (referred to as the ``iid problem'' for simplicity below) 
were constructed. 
\rdec{These are index policies, and
it was shown that they achieve an asymptotically optimal $O(\log t)$ regret, \newt{for single parameterized bandit problems}, meaning that the regret is optimal both in terms of the logarithmic order in time $t$ (referred to as order optimality), and optimal among all algorithms with logarithmic order in time (referred to as optimality in the constant), asymptotically. } 
Later, Agrawal \cite{agrawal1995} considered the same iid problem and provided sample mean based index policies which are easier to compute, order optimal but not 
optimal in terms of the constant in general. {Multiple simultaneous plays was considered in Anantharam et al \cite{anantharamvaraiya1987-1}, and an asymptotically optimal policy was proposed.}
All the above work assumed parametrized distributions for the reward process of the arms. \newt{The problem class we consider in this paper, i.e., URBP, includes parameterized iid bandits with finite  number of states such as the Bernoulli bandits. Therefore, there exists instances of URBP for which no online learning algorithm can achieve better than logarithmic regret. However, in general proving lower bounds for bandit problems is a challenging task and is out of the scope of this paper. Rather than proving lower bounds on the strong regret, in this paper we focus on proving upper bounds on the strong regret, and evaluating the real time performance of our learning algorithms numerically. 
}
%
\newt{In contrast to the work on parameterized bandits}, Auer et al \cite{auerbianchi2002} proposed sample mean based index policies for the iid problem with logarithmic regret when reward processes have a bounded support.
Their upper bound holds uniformly over time rather than asymptotically but this bound is not asymptotically optimal.   

There have been efforts to extend this type of learning from iid bandits to Markovian bandits. Markovian bandits are further divided into two groups: {\em rested} bandits whereby the state of an arm can only change when it is played or activated, and {\em restless} bandits whereby the state of an arm changes according to different Markovian rules depending on whether it is played or not. Optimization version of the rested bandits, when the player knows the transition probabilities, for a discounted reward criterion was solved by Gittins and Jones \cite{gittins1972dynamic}, while the restless bandits was shown to be intractable by Papadimitriou and Tsitsiklis \cite{papadim1999} even in the optimization version. Nevertheless, heuristics, approximations and exact solutions under different/stronger assumptions on the arms have been studied by many, see e.g., \cite{whittle1988restless, guha2010approximation, ahmad2009optimality}. {The learning version of the rested bandits was considered by Anantharam et. al. \cite{anantharamvaraiya1987-2}, and an asymptotically optimal algorithm was proposed.}
%
Note that when the arms are driven by iid processes, the optimal policy (given all statistics) is a static policy that always plays the arm with the highest expected reward. Thus in this case weak regret coincides with strong regret, and the problem is greatly simplified in that arm selection is based on a function of the sample mean of the sequence of rewards from the arms, but not on the sequence itself.  The same observation holds when the arms evolve according to a rested Markovian process, since the optimal policy \rdec{for average reward} is also a static policy when the time horizon is sufficiently large.

Because of the aforementioned difficulties in efficiently computing the optimal dynamic policy \rdec{for the restless bandits} even when the transition probabilities are known, the performance criteria used in designing computationally efficient learning algorithms for restless bandits is typically the weak regret, by comparing to the best static, or single-action policy, one that always plays the arm with the highest expected reward. 
%
In particular, following the approach by Auer et al \cite{auerbianchi2002}, we in \cite{tekin2010online, tekin2011online} provided policies with uniformly
logarithmic {weak} regret bounds with respect to the best-single arm 
policy for both restless and rested bandit problems and
extended the results to single-player multiple-play and decentralized multi-player models in \cite{tekin2012online}. We achieved this by merging blocks of observed states from the same arm to form a continuous sample path of the underlying Markov process. In a parallel work by Liu et. al. \cite{liuliu2010}, \rdec{a similar result is obtained through deterministic sequencing of exploration and exploitation, in which the player explores or exploits an arm in blocks whose lengths increase geometrically over time.}  
Also related are decentralized multi-player versions of the iid problem under different collision models, see e.g., \cite{tekin2011performance, tekin2012sequencing, liu2010distributed, anandkumar2011distributed}.

There has been relatively less work in designing learning algorithms with stronger regret notions than the weak regret, with the exception of \cite{daigai2011}, in which a bandit problem with identical two-state arms was considered, whereby the optimal policy belongs to a finite set given to the player, 
and \cite{tekin2011approx}, in which we considered a special case of the restless bandits called the {\em feedback} bandits, a more general version of the problem studied in \cite{daigai2011} and proposed a computationally efficient learning algorithm that is approximately optimal compared to the optimal dynamic policy, based on an optimization algorithm proposed by Guha et. al. \cite{guha2010approximation}. 

Outside the bandit literature, there has been a lot of work in adaptive learning of Markov Decision Processes (MDPs) with finite state and action spaces, {where the goal is to learn the optimal policy with the smallest possible strong regret.} 
Burnetas and Katehakis \cite{burnetas1997optimal}
proposed index policies with asymptotically logarithmic regret, where 
the indices are the inflations of right-hand-sides of the estimated average reward optimality equations based on Kullback Leibler (KL) divergence, and showed that these are asymptotically optimal both in terms of the order and the constant. However, they assumed that
the support of the transition probabilities is known. Tewari and
Bartlett \cite{tewari2008optimistic} proposed a learning algorithm that uses
$l_1$ distance instead of KL divergence with the same order of
regret but a larger constant; their proof is simpler than that found in \cite{burnetas1997optimal} and does not require the support of the transition probabilities to be known. Auer and Ortner \cite{auer2009near} proposed
another algorithm with logarithmic regret and reduced computation
for the MDP problem, which solves the average reward optimality
equations only when a confidence interval is halved. In all the above work the MDPs are assumed to be irreducible.

{The rested bandit problem with Markovian reward processes may be viewed as a special case of the MDP problem, and therefore the above literature applies to the rested bandit learning and can be used to obtain strong regret results.  By contrast, the restless bandit problem is a special case of the partially observed MDP (or POMDP) problems, which has an uncountably infinite state space and the irreducibility condition may not hold in general.  The goal of this paper is to develop learning algorithms that achieve logarithmic strong regret for the restless bandit problem.} 

{It is also worth noting the difference between regret learning, the approach taken in this paper, and Q-learning. The learning approach we take in this paper is model-based, in which the player estimates the transition probabilities and exploits the estimates to learn how to play. By contrast, Q-learning is a model-free approach that estimates Q-values for state-action pairs (rather than transition probabilities). However, the convergence guarantees of Q-learning algorithms are weaker than our regret bounds. In general, Q-learning algorithms do not have sublinear regret guarantees, and convergence in terms of average reward only takes place when all state-action pairs are observed infinitely many times. See \cite{watkins1992q, kaelbling1996reinforcement, du1995q} for examples of Q-learning in finite MDPs and \cite{kimura1997reinforcement, james2009sarsalandmark} for Q-learning in POMDPs.
Specifically in \cite{du1995q}, a Q-learning algorithm for the rested bandit problem is proposed , in which estimates of the Gittins indices for the arms are obtained via Q-learning. In \cite{james2009sarsalandmark}, a POMDP problem with {\em landmark states} in which each state results in a unique observation is studied, and under standard stochastic approximation conditions asymptotic convergence of Q-functions to the optimal Q-values was proved.
}


\section{Problem Formulation and Preliminaries} \label{single:sec:strong-probform}
\subsection{Problem Formulation and Notations} 
Consider $K$ mutually independent uncontrolled restless Markovian arms, indexed by the set ${\cal K} = \{1,2,\ldots,K\}$ whose states evolve in discrete time steps $t=1,2,\ldots$ according to a finite-state Markov chain with unknown transition probabilities. 

Let $S^k$ be the state space of arm $k$. For simplicity of presentation, we assume that for state $x \in S^k$, $r^k_{x} = x$, i.e., the state of an arm also represents its reward under that state.  {This is without loss of generality as long as one of the following is true: either the state is perfectly observed when played, or that the reward is perfectly observed when received which uniquely identifies a state for a given arm (i.e., no two states have the same reward).}  
It follows that the state space of the system is the Cartesian product of the state spaces of individual arms, denoted by $\boldsymbol{S}= S^1 \times \ldots \times S^K$.  
Let $p^k_{ij}$ denote the transition probability from state $i$ to state $j$ of arm $k$. The transition probability matrix of arm $k$ is denoted by $P^k$, whose $(i,j)$th element is $p^k_{ij}$. The set of transition probability matrices is denoted by $\boldsymbol{P} = (P^1, \ldots, P^K)$. 
\revq{We assume that $P^k$s are such that each arm is ergodic.} \revq{This implies that}, for each arm there exists a unique stationary distribution which is given by $\boldsymbol{\pi}^k =  (\pi^k_x)_{x \in S^k}$.
At each time step, the state of the system is a $K$-dimensional vector of states of arms which is given by $\boldsymbol{x}=(x^1, \ldots, x^K) \in \boldsymbol{S}$. 

\revq{The following notation will be frequently used throughout the paper.} Let $e^k_x$ represent the unit vector with dimension $|S^k|$, whose $x$th element is $1$, and all other elements are $0$.  
$\mathbb{N}=\{1,2,\ldots\}$ denotes the set of natural numbers, $\mathbb{Z}_+=\{0,1,\ldots\}$ the set of non-negative integers, $( \boldsymbol{v} \bullet \boldsymbol{w})$ the standard inner product of vectors $\boldsymbol{v}$ and $\boldsymbol{w}$, $||\boldsymbol{v}||_1$ and $||\boldsymbol{v}||_\infty$ respectively the $l_1$ and $l_\infty$ norms of vector $\boldsymbol{v}$, and $||P||_1$ the induced maximum row sum norm of matrix $P$. For a vector $\boldsymbol{v}$, $(\boldsymbol{v}_{-u},v')$ denotes the vector whose $u$th element is $v'$, while all other elements are the same as in $\boldsymbol{v}$. For a vector of matrices $\boldsymbol{P}$, $(\boldsymbol{P}_{-u},P')$ denotes the vector of matrices whose $u$th matrix is $P'$, while all other matrices are the same as in $\boldsymbol{P}$. The transpose of a vector $\boldsymbol{v}$ or matrix $P$ is denoted by $\boldsymbol{v}^T$ or $P^T$, respectively. 
In addition, the following quantities frequently appear in this paper: 
%
\begin{itemize}
\item 
$\beta= \sum_{t=1}^{\infty} 1/t^{2}$, $\pi^k_{\min} = \min_{x \in S^k} \pi^k_x$; 
\item 
$\pi_{\min} = \min_{k \in \mathcal{K}} \pi^k_{\min}$; 
\item 
$r_{\max}=\max_{x \in S^k, k \in \mathcal{K}} r^k_x$; 
\item 
$S_{\max}=\max_{k \in \mathcal{K}} |S^k|$. 
\end{itemize}

There is a player who selects one of the $K$ arms at each time step $t$, and gets a {bounded} reward depending on the state of the selected arm at time $t$. {Without loss of generality, we assume that the state rewards are non-negative.} Let {$r^k(t)$ be the random variable} which denotes the reward from arm $k$ at time $t$.
The objective of the player is to maximize the undiscounted sum of the rewards over any finite horizon $T>0$. However, the player does not know the set of transition probability matrices $\boldsymbol{P}$. In addition, at any time step $t$ the player can only observe the state of the arm it selects but not the states of the other arms. Intuitively, in order to maximize its reward, the player needs to explore/sample the arms to estimate their transition probabilities and to reduce the uncertainty about the current state $\boldsymbol{x} \in \boldsymbol{S}$ of the system, while it also needs to exploit the information it has acquired about the system to select arms that yield high rewards.  The exploration and exploitation need to be carefully balanced to yield the maximum reward for the player. In a more general sense, the player is learning to play optimally in an uncontrolled POMDP. 

We denote the set of {all possible stochastic matrices with $|S^k|$ rows and $|S^k|$ columns} by $\Xi^k$, and let 
$\boldsymbol{\Xi} = (\Xi^1, \Xi^2, \ldots, \Xi^K)$.  
Since $\boldsymbol{P}$ is unknown to the player, at time $t$ the player has an estimate of $\boldsymbol{P}$, denoted by $\hat{\boldsymbol{P}}_t \in \boldsymbol{\Xi}$. For two {vectors} of transition probability matrices $\boldsymbol{P}$ and $\tilde{\boldsymbol{P}}$, the distance between them is defined as $||\boldsymbol{P}- \tilde{\boldsymbol{P}}||_1 := \sum_{k=1}^K ||P^k - \tilde{P}^k ||_1$.
%
Let $X^k_t$ be the random variable representing the state of arm $k$ at time $t$. Then, the random vector $\boldsymbol{X}_t = (X^1_t, X^2_t, \ldots, X^K_t)$ represents the state of the system at time $t$. 

{The action space $U$ of the player is equal to ${\cal K}$ since it chooses an arm in ${\cal K}$ at each time step, and the observation space $Y$ of the player is equal to $\cup_{k=1}^K S^k$, since it observes the state of the arm it selects at each time step.}
{Since the player can distinguish different arms, for simplicity we will assume $S^k \cap S^l = \emptyset$ for $k \neq l$, so that these states may be labeled distinctly.} {Let $u_t \in U$ be the arm selected by the player at time $t$, and $y_t \in Y$ be the state/reward observed by the player at time $t$. The history of the player at time $t$ consists of all the actions and observations of the player by time $t$, which is denoted by $\boldsymbol{z}^t = (u_1, y_1, u_2, y_2, \ldots, u_{t}, y_t)$. Let $H^t$ denote the set of histories at time $t$.} 
{An algorithm $\alpha = (\alpha(1), \alpha(2), \ldots)$ for the player, is a sequence of mappings from the set of histories to actions, i.e., $\alpha(t) : H^t \rightarrow U$. Since the history depends on the stochastic evolution of the arms, let $U_t$ and $Y_t$ be the random variables representing the action and the observation at time $t$, respectively.  }
Let $Q_{\boldsymbol{P}}(y|u)$ be the sub-stochastic transition probability matrix such that
\begin{align*}
(Q_{\boldsymbol{P}}(y|u))_{\boldsymbol{x} \boldsymbol{x}'} = P_{\boldsymbol{P}}(\boldsymbol{X}_{t} =\boldsymbol{x}', Y_{t}=y | \boldsymbol{X}_{t-1} = x, U_{t} = u),
\end{align*}
where $P_{\boldsymbol{P}}(.|.)$ denotes the conditional probability with respect to distribution $\boldsymbol{P}$. For URBP, $Q_{\boldsymbol{P}}(y|u)$ is the zero matrix for $y \notin S^u$, and for $y \in S^u$, only nonzero entries of $Q_{\boldsymbol{P}}(y|u)$ are the ones for which $x^u = y$.


Let $\Gamma$ be the set of admissible policies, i.e., policies $\gamma'$ for which $\gamma'(t) : H^t \rightarrow U$. Note that the set of admissible policies include the set of optimal policies which are computed by dynamic programming based on $\boldsymbol{P}$. Let $\psi_0$ be the initial belief of the player, which is a probability distribution over $\boldsymbol{S}$. Since we assume that the player knows nothing about the state of the system initially, $\psi_0$ can be taken as the uniform distribution over $\boldsymbol{S}$.

Let $E^{\boldsymbol{P}}_{\psi,\gamma}[.]$ denote the expectation taken with respect to {an algorithm or policy $\gamma$}, initial state $\psi$, and the set of transition probability matrices $\boldsymbol{P}$.
The performance of an algorithm $\alpha$ can be measured by its strong regret, whose value at time $t$ is the difference between performance of the algorithm and performance of the optimal policy by time $t$. It is given by
\begin{align}
R^{\alpha}(T) = \sup_{\gamma' \in \Gamma} \left( E^{\boldsymbol{P}}_{\psi_0, \gamma'}\left[\sum_{t=1}^T  r^{\gamma'(t)}(t)\right]\right) - E^{\boldsymbol{P}}_{\psi_0, \alpha}\left[\sum_{t=1}^T r^{\alpha(t)}(t)\right]. \label{eqn:probform1}
\end{align}

{It is easy to see that the time average reward of any algorithm with sublinear regret, i.e., regret $O(T^{\rho})$, $\rho <1$, converges to the time average reward of the optimal policy.  For any algorithm with sublinear regret, its regret is a measure of its convergence rate to the average reward. In Section \ref{single:sec:strong-alg}, we will give an algorithm whose regret grows logarithmically in time, which is the best possible rate of convergence.}


 \subsection{Solutions to the Average Reward Optimality Equation} \label{single:sec:strong-aroe}
 
As mentioned earlier, if the transition probability matrices of the arms are known by the player, then the URBP becomes an optimization problem (POMDP) rather than a learning problem.  In this section we discuss the solution approach to this optimization problem.  This approach is then used in subsequent sections by the player in the learning context using {\em estimated} transition probability matrices.  

%
A POMDP problem is often presented using the belief space (or information state), i.e., the set of probability distributions over the state space. For the URBP with the set of transition probability matrices $\boldsymbol{P}$, the belief space is given by 
\begin{align*}
\boldsymbol{\Psi} := \left\{\psi : \psi^T \in \mathbb{R}^{|\boldsymbol{S}|}, \psi_{\boldsymbol{x}} \geq 0, \forall \boldsymbol{x} \in \boldsymbol{S}, \sum_{\boldsymbol{x} \in \boldsymbol{S}} \psi_{\boldsymbol{x}} =1 \right\}, 
\end{align*}
which is the unit simplex in $\mathbb{R}^{|\boldsymbol{S}|}$. Let $\psi_t$ denote the belief of the player at time $t$.
Then the probability that the player observes $y$ given it selects arm $u$ when the belief is $\psi$ is given by 
\begin{align*}
V_{\boldsymbol{P}}(\psi,y,u) := \psi Q_{\boldsymbol{P}}(y|u) \boldsymbol{1},
\end{align*}
where $\boldsymbol{1}$ is the $|\boldsymbol{S}|$ dimensional column vector of $1$s. Given arm $u$ is chosen under belief state $\psi$ and $y$ is observed, the next belief state is 
\begin{align*}
T_{\boldsymbol{P}}(\psi,y,u) := \frac{\psi Q_{\boldsymbol{P}}(y|u)}{V_{\boldsymbol{P}}(\psi,y,u)} ~.
\end{align*}
%
The average reward optimality equation (AROE) is
\begin{align}
g + h(\psi) &= \max_{u \in U} \left\{ \bar{r}(\psi,u) + \sum_{y \in S^u} V_{\boldsymbol{P}}(\psi,y,u) h(T_{\boldsymbol{P}}(\psi,y,u)) \right\}, \label{s:urestls:eqn:AROE}
\end{align}
where $g$ is a constant and $h$ is a function from $\boldsymbol{\Psi} \rightarrow \mathbb{R}$,
\begin{align*}
\bar{r}(\psi,u)=(\psi \bullet r(u))= \sum_{x^u \in S^u} x^u \phi_{u,x^u}(\psi)
\end{align*}
is the expected reward of action $u$ under belief $\psi$, $\phi_{u,x^u}(\psi)$ is the probability that arm $u$ is in state $x^u$ given belief $\psi$, $r(u)=(r(\boldsymbol{x},u))_{\boldsymbol{x} \in S}$ and $r(\boldsymbol{x},u)=x^u$ is the reward when arm $u$ is chosen in state $\boldsymbol{x}$. 

%
\begin{assumption} \label{s:urestls:assump:1}
$p^k_{ij}>0, \forall k \in {\cal K}, i,j \in S^k$.
\end{assumption}
{When Assumption \ref{s:urestls:assump:1} holds}, the existence of a bounded, convex continuous solution to (\ref{s:urestls:eqn:AROE}) is guaranteed. 
\newt{The set of Markov chains for which Assumption \ref{s:urestls:assump:1} holds is a subset of the class of aperiodic Markov chains. From any periodic or aperiodic Markov chain, we can obtain a Markov chain which belongs to this class by adding a small uniform perturbation to the state transition probabilities.}

Let $V$ denote the space of bounded real-valued functions on $\boldsymbol{\Psi}$. Next, we define the undiscounted dynamic programming operator $F: V \rightarrow V$. Let $v \in V$, we have
\begin{align}
(Fv)(\psi) = \max_{u \in U} \left\{ \bar{r}(\psi,u) + \sum_{y \in S^u} V_{\boldsymbol{P}}(\psi,y,u) v(T_{\boldsymbol{P}}(\psi,y,u)) \right\}. \label{s:urestls:eqn:DPOP}
\end{align}
{In the following lemma, we give some of the properties of the solutions to the average reward optimality equation and the dynamic programming operator defined above.} 
\begin{lemma} \label{s:urestls:lemma:ACOEexistence}
Let $h_+ = h-\inf_{\psi \in \boldsymbol{\Psi}} (h(\psi))$, $h_- = h-\sup_{\psi \in \boldsymbol{\Psi}} (h(\psi))$ and 
\begin{align*}
h_{T,\boldsymbol{P}}(\psi) = \sup_{\gamma \in \Gamma} \left( E^{\boldsymbol{P}}_{\psi, \gamma}\left[\sum_{t=1}^T r^\gamma(t)\right]\right).
\end{align*}
{Given that Assumption \ref{s:urestls:assump:1} is true,} the following holds:
\begin{enumerate}[{S}-1]
\item  Consider a sequence of functions $v_0, v_1, v_2, \ldots$ in $V$ such that $v_0=0$, and $v_l = F v_{l-1}$, $l=1,2,\ldots$. This sequence converges uniformly to a convex continuous function $v^*$ for which $Fv^* = v^* + g$ where $g$ is a finite constant. In terms of (\ref{s:urestls:eqn:AROE}), this result means that there exists a finite constant $g_{\boldsymbol{P}}$ and a bounded convex continuous function $h_{\boldsymbol{P}}: \boldsymbol{\Psi} \rightarrow \mathbb{R}$ which is a solution to (\ref{s:urestls:eqn:AROE}).
\item $h_{\boldsymbol{P}-}(\psi) \leq h_{T,\boldsymbol{P}}(\psi)-T g_{\boldsymbol{P}} \leq h_{\boldsymbol{P}+}(\psi)$, $\forall \psi \in \boldsymbol{\Psi}$.
\item $h_{T,\boldsymbol{P}}(\psi) = T g_{\boldsymbol{P}} + h_{\boldsymbol{P}}(\psi) + O(1)$ as $T \rightarrow \infty$.
\end{enumerate}
\end{lemma}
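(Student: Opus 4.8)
The plan is to establish S-1 first and then deduce S-2 and S-3 from it by soft arguments. I read S-1 as the statement that the \emph{centered} iterates $v_l-v_l(\psi_0)$ converge uniformly to a convex continuous $v^\ast=h_{\boldsymbol P}$ solving $Fv^\ast=v^\ast+g_{\boldsymbol P}$, i.e.\ a solution of (\ref{s:urestls:eqn:AROE}); this is the content needed downstream, since $F$ in (\ref{s:urestls:eqn:DPOP}) has no genuine bounded fixed point. Before anything else I would record three elementary properties of $F$: it is monotone, nonexpansive in $\|\cdot\|_\infty$, and commutes with constants, $F(v+c\boldsymbol 1)=Fv+c$ (because $\sum_{y\in S^u}V_{\boldsymbol P}(\psi,y,u)=1$). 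Assuming S-1 supplies $g_{\boldsymbol P}$ and $h_{\boldsymbol P}$ with $Fh_{\boldsymbol P}=h_{\boldsymbol P}+g_{\boldsymbol P}$, induction with the constant-shift property gives $F^Th_{\boldsymbol P}=h_{\boldsymbol P}+Tg_{\boldsymbol P}$. Since $h_{\boldsymbol P}-\sup_\psi h_{\boldsymbol P}\le 0\le h_{\boldsymbol P}-\inf_\psi h_{\boldsymbol P}$ pointwise and $v_T=F^T0=h_{T,\boldsymbol P}$, applying the monotone map $F^T$ to this sandwich yields $h_{\boldsymbol P-}+Tg_{\boldsymbol P}\le h_{T,\boldsymbol P}\le h_{\boldsymbol P+}+Tg_{\boldsymbol P}$, which is exactly S-2; S-3 is then immediate, as $h_{T,\boldsymbol P}-Tg_{\boldsymbol P}-h_{\boldsymbol P}$ is trapped in $[-\sup_\psi h_{\boldsymbol P},-\inf_\psi h_{\boldsymbol P}]$ uniformly in $T$.

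For S-1 I would first check that $F$ preserves convexity and continuity, so that a uniform limit inherits these. Each summand $\psi\mapsto V_{\boldsymbol P}(\psi,y,u)\,v(T_{\boldsymbol P}(\psi,y,u))$ is the perspective of $v$ composed with the linear maps $\psi\mapsto \psi Q_{\boldsymbol P}(y|u)$ and $\psi\mapsto V_{\boldsymbol P}(\psi,y,u)$, hence convex whenever $v$ is; adding the linear term $\bar r(\psi,u)$ and taking a finite maximum preserves convexity, so every $v_l$ is convex (starting from $v_0=0$), and a uniform limit of convex continuous functions is convex continuous. Assumption \ref{s:urestls:assump:1} removes any boundary degeneracy here: since all entries of $P^u$ are positive, $V_{\boldsymbol P}(\psi,y,u)=(\psi^uP^u)_y\ge p_{\min}:=\min_{k,i,j}p^k_{ij}>0$, so $T_{\boldsymbol P}$ is well defined and the perspective is continuous on all of $\boldsymbol\Psi$.

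The heart of the argument, and the step I expect to be hardest, is the convergence of the value iteration. Here I would exploit the special structure of URBP: the arms are independent and only the selected arm is observed, so the belief factorizes, $\psi=\bigotimes_k\psi^k$, and one step sends $\psi$ to $T_{\boldsymbol P}(\psi,y,u)=e^u_y\otimes\bigotimes_{k\ne u}\psi^kP^k$ with observation probability $V_{\boldsymbol P}(\psi,y,u)=(\psi^uP^u)_y$. In words, the observed arm's marginal is reset to the vertex $e^u_y$ independently of the past, while every unobserved marginal is advanced by a positive matrix $P^k$ whose Dobrushin coefficient $\tau_k\le 1-|S^k|p_{\min}<1$ is strictly less than one under Assumption \ref{s:urestls:assump:1} (consistent with the geometric ergodicity in Lemma \ref{intro:lemma:ergodic}). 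Feeding this into the Bellman recursion for $v_l(\psi_1)-v_l(\psi_2)$, with a common maximizing action $u_1$, splits the difference into a reward term bounded by $r_{\max}\|\psi_1^{u_1}-\psi_2^{u_1}\|_1$, a term bounded by the Lipschitz modulus of $v_{l-1}$ times $\|\bigotimes_{k\ne u_1}\psi_1^kP^k-\bigotimes_{k\ne u_1}\psi_2^kP^k\|_1\le\tau_{\max}\sum_{k\ne u_1}\|\psi_1^k-\psi_2^k\|_1$ (using the measure-valued counterpart of Lemma \ref{s:urestls:lemma:sumbound} to pass from the product belief to the sum of per-arm discrepancies), and a term bounded by $\mathrm{sp}(v_{l-1})$ times $\|\psi_1^{u_1}P^{u_1}-\psi_2^{u_1}P^{u_1}\|_1\le\tau_{\max}\|\psi_1^{u_1}-\psi_2^{u_1}\|_1$, where $\tau_{\max}=\max_k\tau_k<1$. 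Iterating this over a block of $n$ steps, after which each arm's marginal discrepancy is either $0$ (the arm was reobserved) or at most $\tau_k^n$ times the diameter, produces a genuine contraction of the Lipschitz moduli once $\tau_{\max}^n$ is small enough, hence a uniform bound on $\mathrm{sp}(v_l)$ and geometric decay of $\mathrm{sp}(v_{l+1}-v_l)$.

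Finally I would close S-1 by relative value iteration. Writing $h_l=v_l-v_l(\psi_0)$, the estimate $\|h_{l+1}-h_l\|_\infty\le\mathrm{sp}(v_{l+1}-v_l)$ is summable, so $h_l$ is Cauchy in $\|\cdot\|_\infty$ and converges to a convex continuous $h_{\boldsymbol P}$; the scalars $g_l=v_{l+1}(\psi_0)-v_l(\psi_0)=(Fh_l)(\psi_0)$ converge to $g_{\boldsymbol P}=(Fh_{\boldsymbol P})(\psi_0)$ by nonexpansiveness, and passing to the limit in $h_{l+1}=Fh_l-g_l$ gives $Fh_{\boldsymbol P}=h_{\boldsymbol P}+g_{\boldsymbol P}$, the desired solution of (\ref{s:urestls:eqn:AROE}) and the identity $Fv^\ast=v^\ast+g$ of S-1. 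The main obstacle is the crux in the previous paragraph: the two belief trajectories being compared are driven by different observation laws, so the merging must be carried through the value recursion (where the differing laws appear precisely as the $\|\psi_1^{u_1}P^{u_1}-\psi_2^{u_1}P^{u_1}\|_1$ term) and the Lipschitz/span bookkeeping must be iterated over $n$ steps, since a single step need not contract. Everything else—the convexity/continuity of the perspective and the monotonicity/shift manipulations for S-2 and S-3—is routine once this uniform geometric forgetting, which is exactly what Assumption \ref{s:urestls:assump:1} buys, is in hand.
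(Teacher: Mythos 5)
Your proposal takes a genuinely different route from the paper. The paper does not prove convergence of value iteration at all: its proof of S-1 consists of verifying two sufficient conditions from \cite{platzman1980} --- reachability, which is nearly trivial here with $\xi=1$ because the dynamics are uncontrolled, so $\sup_{\gamma\in\Gamma}P(\boldsymbol{X}_1=\boldsymbol{x}\,|\,\psi_0)\geq \min_{\boldsymbol{x},\boldsymbol{x}'}P(\boldsymbol{X}_{t+1}=\boldsymbol{x}'\,|\,\boldsymbol{X}_t=\boldsymbol{x})>0$ under Assumption \ref{s:urestls:assump:1}, and subrectangularity of the matrices $Q(y|u)$ (zero matrix for $y\notin S^u$, all entries strictly positive for $y\in S^u$), which implies Platzman's detectability condition --- and then citing Platzman's Theorem 4 for S-1 and his Lemma 1 for S-2, with S-3 obtained exactly as you obtain it, from S-2 plus boundedness. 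Your derivation of S-2 from S-1 via monotonicity, nonexpansiveness and the constant-shift identity $F(v+c\boldsymbol{1})=Fv+c$, applied to the sandwich $h_{\boldsymbol{P}-}\leq 0\leq h_{\boldsymbol{P}+}$ with $F^T0=h_{T,\boldsymbol{P}}$, is correct and is precisely the content of Platzman's Lemma 1, independently reproved; likewise your convexity-preservation argument via perspective functions, and your reading of S-1 as convergence of the \emph{centered} iterates is the right interpretation (literally, $v_l\approx lg_{\boldsymbol{P}}$ diverges). What your route buys is a self-contained argument with explicit forgetting rates ($\tau_{\max}<1$ from the Dobrushin coefficients, which positivity guarantees); what the paper's citation buys is brevity and a statement valid on the whole simplex with no extra bookkeeping.

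There is one concrete overreach in the hard step. The factorization $\psi=\bigotimes_k\psi^k$ holds only on the product sub-family of $\boldsymbol{\Psi}$: that family is forward-invariant and contains every reachable belief $\psi_{\boldsymbol{P}}(\boldsymbol{s},\boldsymbol{\tau})$ after the initialization round, but the lemma (and the algorithm, which solves the estimated AROE for all $\psi\in\Psi$) concerns the full simplex, where a correlated prior stays correlated across the unobserved arms after the update $T_{\boldsymbol{P}}(\psi,y,u)$, so the per-arm marginal Dobrushin bookkeeping does not directly apply. To close this you must either restrict all statements to the reachable product beliefs (sufficient for the regret analysis downstream) or run the merging argument on joint beliefs using positivity of every entry of $Q(y|u)$ for $y\in S^u$ --- e.g., via Birkhoff contraction in the Hilbert projective metric --- which is exactly the subrectangularity property the paper invokes. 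Relatedly, your block-contraction scheme must simultaneously establish a uniform bound on $\mathrm{sp}(v_l)$ and geometric decay of $\mathrm{sp}(v_{l+1}-v_l)$, since the observation-mismatch terms are weighted by spans of later iterates; this self-referential bookkeeping is nontrivial and is the part of the argument for which the paper simply defers to Platzman's theorem.
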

\begin{proof}
Sufficient conditions for the existence of a bounded convex continuous solution to the AROE are investigated in \cite{platzman1980}. According to Theorem 4 of \cite{platzman1980}, if reachability and detectability conditions are satisfied then S-1 holds. Below, we directly prove that reachability condition in \cite{platzman1980} is satisfied. To prove that detectability condition is satisfied, we show another condition, i.e., subrectangular substochastic matrices, holds which implies the detectability condition.
 
We note that $P(\boldsymbol{X}_{t+1} = \boldsymbol{x}' | \boldsymbol{X}_{t} = \boldsymbol{x}  ) > 0$, $\forall \boldsymbol{x}, \boldsymbol{x}' \in \boldsymbol{S}$ since by Assumption \ref{s:urestls:assump:1}, $p^k_{ij} > 0$ $\forall i,j \in S^k, \forall k \in {\cal K}$.

\begin{condition}\label{cond:platzman2}
(Reachability) There is a $\rho < 1$ and an integer $\xi$ such that {for all $\boldsymbol{x} \in \boldsymbol{S}$}
\begin{align*}
\sup_{\gamma \in \Gamma} \max_{0 \leq t \leq \xi} P(\boldsymbol{X}_t = \boldsymbol{x} | \psi_0) \geq 1 - \rho, ~~ \forall \psi_0 \in \boldsymbol{\Psi}.
\end{align*}
\end{condition}
Set $\rho = 1 -\min_{\boldsymbol{x}, \boldsymbol{x}'} P(\boldsymbol{X}_{t+1} = \boldsymbol{x}' | \boldsymbol{X}_{t} = \boldsymbol{x}  )$, $\xi = 1$. Since the system is uncontrolled, state transitions are independent of the arm selected by the player. Therefore,
\begin{align*}
\sup_{\gamma \in \Gamma}  P(\boldsymbol{X}_1 = \boldsymbol{x} | \psi_0) &= P(\boldsymbol{X}_1 = \boldsymbol{x} | \psi_0) \\
& \geq \min_{\boldsymbol{x}, \boldsymbol{x}'} P(\boldsymbol{X}_{t+1} = \boldsymbol{x}' | \boldsymbol{X}_{t} = \boldsymbol{x}  ) = 1 - \rho.
\end{align*}
\begin{condition}\label{cond:platzman3}
(Subrectangular matrices) For any substochastic matrix $Q(y|u), y \in Y, u \in U$, and for any $i,i',j,j' \in \boldsymbol{S}$,
\begin{align*}
(Q(y|u))_{ij} >0 \textrm{ and } (Q(y|u))_{i'j'} > 0 ~~\Rightarrow (Q(y|u))_{ij'} >0 \textrm{ and } (Q(y|u))_{i'j} > 0.
\end{align*}
\end{condition}
$Q(y|u)$ is subrectangular for $y \notin S^u$ since it is the zero matrix. For $y \in S^u$ all entries of $Q(y|u)$ is positive since $P(\boldsymbol{X}_{t+1} = \boldsymbol{x}' | \boldsymbol{X}_{t} = \boldsymbol{x}  ) > 0$, $\forall \boldsymbol{x}, \boldsymbol{x}' \in \boldsymbol{S}$.

S-2 holds by Lemma 1 in \cite{platzman1980}, and S-3 is a consequence of S-2 and the boundedness property in S-1. 
\end{proof}

 \subsection{Countable Representation of the Information State} \label{single:sec:strong-countable}
 The belief space is uncountable. Since the problem we consider is a learning problem, it is natural to assume that the player does not have an initial belief about the state of the system.  However, in a learning context there is no loss of generality in adopting an initial belief formed by playing each arm at least once.  Assume that the initial $K$ steps are such that the player selects arm $k$ at the $k$th step. Then the POMDP for the player can be written as a countable-state MDP.  In this case a more succinct way of representing the information state at time $t$ is given by  
\begin{align*}
(\boldsymbol{s}_t, \boldsymbol{\tau}_t) = ( (s^1_t, s^2_t \ldots, s^K_t), (\tau^1_t, \tau^2_t \ldots, \tau^K_t)),
\end{align*}
where $s^k_t$ and $\tau^k_t$ are the last observed state of arm $k$ and how long ago (from $t$) the last observation of arm $k$ was made, respectively.
Note that the countable state MDP obtained this way is a subset of the POMDP for the bandit problem in which the player can only be in one of the countably many points in the belief space $\boldsymbol{\Psi}$ at any time step $t$. Our approach is to exploit the continuity property of the AROE to bound the regret of the player. In order to do this we need to work with both methods of state representation. 
We thus make a distinction between $\psi_t$, which is a probability distribution over the state space, and $(\boldsymbol{s}_t, \boldsymbol{\tau}_t)$ which is a sufficient statistic for the player to calculate $\psi_t$ when $\boldsymbol{P}$ is given.  Subsequently we will call $\psi \in \boldsymbol{\Psi}$, the {\em belief} or {\em belief state}, and $(\boldsymbol{s}, \boldsymbol{\tau})$ the {\em information state}.\footnote
{We note that in the POMDP literature these two terms are generally used interchangeably.}

{The contribution of the initial $K$ steps to the regret is at most $K r_{\max}$, which we will subsequently ignore in our analysis}. We will only analyze the time steps after this initialization, and set $t=0$ upon the completion of the initialization phase. The initial information state of the player can be written as  $(\boldsymbol{s}_0, \boldsymbol{\tau}_0)$. Let ${\cal C}$ be the set of all possible information states that the player can be in. Since the player selects a single arm at each time step, at any time$t$, $\tau^k_t=1$ for the last selected arm $k$ (at $t-1$).

The player can compute its belief state $\psi_t \in \Psi$ by using its transition probability estimates $\hat{\boldsymbol{P}}_t$ together with the information state $(\boldsymbol{s}_t, \boldsymbol{\tau}_t)$. We let $\psi_{\boldsymbol{P}}(\boldsymbol{s}_t, \boldsymbol{\tau}_t)$ be the belief that corresponds to information state $(\boldsymbol{s}_t, \boldsymbol{\tau}_t)$ when the set of transition probability matrices is $\boldsymbol{P}$. The player knows the information state exactly, but it only has an estimate of the belief that corresponds to the information state, because it does not know the transition probabilities. The true belief computed with the knowledge of exact transition probabilities and information state at time $t$ is denoted by $\psi_t$, while the estimated belief computed with estimated transition probabilities and information state at time $t$ is denoted by $\hat{\psi}_t$.

When the belief is $\psi$ and the set of transition probability matrices is $\boldsymbol{P}$, the set of optimal actions which are the maximizers of (\ref{s:urestls:eqn:AROE}) is denoted by $O(\psi;\boldsymbol{P})$. When the information state is $(\boldsymbol{s}_t, \boldsymbol{\tau}_t)$, and the set of transition probability matrices is $\boldsymbol{P}$, \revq{we denote the set of optimal actions by $O((\boldsymbol{s}, \boldsymbol{\tau}); \boldsymbol{P}) := O(\psi_{\boldsymbol{P}}((\boldsymbol{s}, \boldsymbol{\tau}));\boldsymbol{P})$.}


 \section{Average Reward with Estimated Probabilities (AREP) Algorithm} \label{single:sec:strong-alg}
 \begin{figure}[h!]
\hspace{-0.15in}
\fbox {
\begin{minipage}{\columnwidth}
\flushleft{Average Reward with Estimated Probabilities (AREP)}
{\fontsize{10}{10}\selectfont
\begin{algorithmic}[1]
\STATE{Initialize: $f(t)$ given for $t \in \{1,2,\ldots \}$, $t=1$, $N^k_{i,j}=0, C^k_i=0$, $\forall k \in {\cal K}, i,j \in S^k$. Play each arm once to set the initial information state $(\boldsymbol{s}_0, \boldsymbol{\tau}_0)$. Pick $\alpha(0)$ randomly.}
\WHILE {$t\geq 1$}
\STATE{$\hat{p}^k_{ij} = ( I(N^k_{i,j}=0) + N^k_{i,j})/(|S^k| I(C^k_i=0) +C^k_i)$}
%
\STATE{$W=\{ k \in {\cal K}: \textrm{ there exists } i \in S^k \textrm{ such that } C^k_i < f(t) \}$.}
\IF{$W \neq \emptyset$}
\STATE{EXPLORE}
\IF{$\alpha(t-1) \in W$}
\STATE{$\alpha(t)=\alpha(t-1)$}
\ELSE
\STATE{select $\alpha(t) \in W$ arbitrarily}
\ENDIF
\ELSE
\STATE{EXPLOIT}
\STATE{solve $\hat{g_t} + \hat{h_t}(\psi) = \max_{u \in U} \{ \bar{r}(\psi,u) + \sum_{y \in S^u} V(\psi,y,u) \hat{h}_t(T_{\hat{\boldsymbol{P}}_t}(\psi,y,u)) \}, \forall \psi \in \Psi$.}
\STATE{Let $\hat{\psi_t}$ be the estimate of the belief at time $t$ based on $(\boldsymbol{s}_t, \boldsymbol{\tau}_t)$ and $\hat{\boldsymbol{P}}_t$.}
\STATE{compute the indices of all actions at $\hat{\psi}_t$:}
\STATE{$\forall u \in U$, ${\cal I}_t (\hat{\psi}_t, u)= \bar{r}(\hat{\psi}_t,u) + \sum_{y \in S^u} V(\hat{\psi}_t,y,u) \hat{h}_t(T_{\hat{\boldsymbol{P}}_t}(\hat{\psi}_t,y,u))$.}
\STATE{Let $u^*$ be the arm with the highest index (arbitrarily select one if there is more than one such arm).}
\STATE{$\alpha(t)=u^*$.}
\ENDIF
\STATE{Receive reward $r^{\alpha(t)}(t)$, i.e., state of $\alpha(t)$ at $t$}
\STATE{Compute $(\boldsymbol{s}_{t+1}, \boldsymbol{\tau}_{t+1})$}
\IF{$\alpha(t-1) = \alpha(t)$}
\FOR{$i,j \in S^{\alpha(t)}$}
\IF{State $j$ is observed at $t$, state $i$ is observed at $t-1$}
\STATE{$N^{\alpha(t)}_{i,j}=N^{\alpha(t)}_{i,j}+1$, $C^{\alpha(t)}_i=C^{\alpha(t)}_i+1$.}
\ENDIF
\ENDFOR
\ENDIF
\STATE{$t:=t+1$}
\ENDWHILE
\end{algorithmic}
}
\end{minipage}
} \caption{Pseudocode for the Average Reward with Estimated Probabilities (AREP) algorithm.} \label{s:urestls:fig:adaptive}
\end{figure}

In this section we propose the algorithm {\em Average Reward with Estimated Probabilities} (AREP) given in Fig. \ref{s:urestls:fig:adaptive}, as a learning algorithm for the player. AREP consists of exploration and exploitation phases. In the exploration phase the player plays each arm for a certain amount of time to form estimates of the transition probabilities, while in the exploitation phase the player selects an arm according to the optimal policy based on the estimated transition probabilities. 
At each time step, the player decides if it is an exploration phase or an exploitation phase based on the accuracy of the transition probability estimates. Let $N^k(t)$ be the number of times arm $k$ is selected by time $t$, $N^k_{i,j}(t)$ be the number of times a transition from state $i$ to state $j$ of arm $k$ is observed by the player by time $t$, and $C^k_i(t)$ be the number of times a transition from state $i$ of arm $k$ to any state of arm $k$ is observed by time $t$. Clearly,
\begin{align*}
C^k_i(t) = \sum_{j \in S^k} N^k_{i,j}(t).
\end{align*}
Let $f(t)$ be a non-negative, increasing function which sets a condition on the accuracy of the estimates. If $C^k_i(t) < f(t)$ for some $k \in {\cal K}$, $i \in S^k$, the player explores at time $t$. Otherwise, the player exploits at time $t$. 
In an exploration step, in order to update the estimate of $p^k_{ij}, j \in S^k$, the player keeps playing arm $k$ until state $i$ is observed, and then plays arm $k$ one more time to observe the  state following $i$.  \rdec{Note that in between the player can update other estimates depending on what states are observed.} 
Then the player forms the following sample mean estimates of the transition probabilities:
\begin{align*}
\hat{p}^k_{ij,t} := \frac{N^k_{i,j}(t)}{C^k_i(t)} ~, i,j \in S^k . 
\end{align*}
%
%
%

If AREP is in the exploitation phase at time $t$, then the player first computes $\hat{\psi_t}$, the estimated belief at time $t$, using the set of estimated transition probability matrices $\hat{\boldsymbol{P}}_t$. Then, it solves the AROE using $\hat{\boldsymbol{P}}_t$, to which the solution is given by $\hat{g}_t$ and $\hat{h}_t$.  For now we will ignore complexity issues and assume the player can compute the solution at every time step.  \rdec{More is discussed in the Conclusion.} 
This solution is used to compute the indices (given on line 17 of AREP) as
\begin{align*}
{\cal I}_t (\hat{\psi}_t, u)= \bar{r}(\hat{\psi}_t,u) + \sum_{y \in S^u} V(\hat{\psi}_t,y,u) \hat{h}_t(T_{\hat{\boldsymbol{P}}_t}(\hat{\psi}_t,y,u)),
\end{align*}
for each action $u \in U$ at estimated belief $\hat{\psi_t}$. ${\cal I}_t (\hat{\psi}_t, u)$ represents the advantage of choosing action $u$ starting from information state $\hat{\psi}_t$, i.e, the sum of gain and bias. After computing the indices for each action, the player selects the action with the highest index. In case of a tie, one of the actions with the highest index is randomly selected. Note that it is possible to update the state transition probabilities even in the exploitation phase given that the arms selected at times $t-1$ and $t$ are the same. Thus $C^k_i(t)$ may also increase in an exploitation phase, and the number of explorations may be smaller than the number of explorations needed in the worst case, where the estimates are only updated during exploration steps. 

In subsequent sections we will bound the strong regret of AREP by bounding the number of times a suboptimal arm selection is made at any information state. Since there are infinitely many information states, our approach to bounding the sum of the number of suboptimal plays is to introduce a finite partition of the space of information states. We do this in the next section. 
For the remainder of the paper we will denote AREP by $\alpha$.

 \section{Finite Partitions of the Information State} \label{single:sec:defnfinpar}
 
Note that even if the player knows the optimal policy as a function of the belief state for any time horizon $T$, it may not be able to play optimally because it does not know the exact belief $\psi_t$ at time $t$.  One way to ensure that the player plays optimally in this case is to show that there exists an $\epsilon>0$ such that if $||\psi_t - \hat{\psi}_t||_1 < \epsilon$, the optimal actions in $\hat{\psi}_t$ belong to a subset of the set of optimal actions in $\psi_t$. This is indeed the case, and we prove it by exploiting the continuity of the solution to (\ref{s:urestls:eqn:AROE}) under Assumption \ref{s:urestls:assump:1}.


\subsection{Grouping the information states}

We start by introducing a finite partition of the set of information states ${\cal C}$. 
\begin{definition}\label{defn:partition}
Let $\tau_{\textrm{tr}} >0$ be an integer which denotes a threshold in time lag. This threshold is used to group all information states of an arm which has not been played for more than this threshold as a single group.  Consider a vector $\boldsymbol{i} = (i^1, \ldots, i^K)$ such that either $i^k=\tau_{\textrm{tr}}$ or $i^k=(s_i^k,\tau_i^k), \tau_i^k < \tau_{\textrm{tr}}, s_i^k \in S^k$. \rdec{For a finite $\tau_{\textrm{tr}}$ there are only a finite number of such vectors. Each vector defines a set of information states; it contains either a single  information state or infinitely many information states of the arms for which $i^k = \tau_{\textrm{tr}}$.  Together these vectors form a finite partition of ${\cal C}$, and we will call $\boldsymbol{i}$ a {\em partition vector}. 
%
Let ${\cal G}_{\tau_{\textrm{tr}}}$ denote the partition formed by $\tau_{\textrm{tr}}$, 
and let ${\cal M}(\boldsymbol{i}) := \{k: i^k =\tau_{\textrm{tr}}\}$ be the set of arms that are played at least $\tau_{\textrm{tr}}$ time steps ago, while $\overline{{\cal M}}(\boldsymbol{i}) := {\cal K}- {\cal M}(\boldsymbol{i})$. } 
Vector $\boldsymbol{i}$ represents the following set in the partition ${\cal G}_{\tau_{\textrm{tr}}}$: 
\begin{eqnarray}
%
\rdec{G_{\boldsymbol{i}} = \{(\boldsymbol{s}, \boldsymbol{\tau}) \in {\cal C}: s^k = s_i^k, \tau^k = \tau_i^k, \forall k\in \overline{{\cal M}}(\boldsymbol{i}), s^k \in S^k, \tau^k \geq \tau_{\textrm{tr}}, \forall k \in {\cal M}(\boldsymbol{i}) \}. 
} 
\end{eqnarray} 
\end{definition} 

Let $A(\tau_{\textrm{tr}})$ be the number of sets in partition ${\cal G}_{\tau_{\textrm{tr}}}$. Re-index the sets in ${\cal G}_{\tau_{\textrm{tr}}}$ as $G_1, G_2, \ldots, G_{A(\tau_{\textrm{tr}})}$. 
%
%
For a set $G_l \in {\cal G}_{\tau_{\textrm{tr}}}$, given a set of transition probability matrices $\boldsymbol{P}$, we define its center as follows. If $G_l$ only contains a single information state, then the belief corresponding to that information state is the center of $G_l$. If $G_l$ contains infinitely many information states, then the center belief of $G_l$ is the belief in which all arms for which $i^k = \tau_{\textrm{tr}}$ are in their stationary distribution based on $\boldsymbol{P}$. In both cases, the center belief of $G_l$ is denoted by $\psi^*(G_l; \boldsymbol{P})$. \newt{Let $\boldsymbol{s}_{l}$ be the information state corresponding to the center belief of $G_l$. Although $\psi^*(G_l; \boldsymbol{P})$ depends on $\boldsymbol{P}$, $\boldsymbol{s}_{l}$ does not depend on $\boldsymbol{P}$.}
\rdec{Since each arm is ergodic, when we map a set $G_l$ with infinitely many information states to the belief space using $\psi_{\boldsymbol{P}}$, for any $\delta>0$, only a finite number of information states in $G_l$ will lie outside the radius-$\delta$ ball around the center belief. } 

Let $O^*(G_l; \boldsymbol{P})$ be the set of optimal actions at the center belief. 
Note that as $\tau_{\textrm{tr}}$ increases, the number of sets with infinitely many elements increases, as does the 
number of sets with a single information state. 
The points in the belief space corresponding to these sets are shown in Figure \ref{fig:finite_partition}.  Below is an example of of such a finite partition of ${\cal C}$ with $\tau_{\textrm{tr}}=3$ when $K=2$.

\begin{example} \label{ex:afinitepart}
Let $K=2$, $S^1 = \{0,2\}$, $S^2 = \{1\}$ and $\tau_{\textrm{tr}}=3$. {For convenience we will rewrite $(\boldsymbol{s}, \boldsymbol{\tau}) = ((s^1, \tau^1), (s^2, \tau^2))$.} Then the partition formed by $\tau_{\textrm{tr}}$, i.e., ${\cal G}_{\tau_{\textrm{tr}}}$ contains the following sets:
\begin{align*}
G_1 &= \left\{ \left((0, 1), (1,2) \right)  \right\}, ~~~
G_2 = \left\{ \left((2, 1), (1,2) \right)  \right\}, \\
G_3 &= \left\{ \left((0, 2), (1,1) \right)  \right\}, ~~~
G_4 = \left\{ \left((2, 2), (1,1) \right)  \right\},\\
%
G_5 &= \left\{ \left((0, 1), (1,3) \right),~ \left((0, 1), (1,4)\right), \ldots \right\},\\
G_6 &=  \left\{ \left((2, 1), (1,3) \right),~ \left((2, 1), (1,4)\right), \ldots    \right\},\\
G_7 &= \left\{ \left((0, 3), (1,1) \right),~ \left((2, 3), (1,1) \right),~ \left((0, 4), (1,1)\right),~ \left((2, 4), (1,1) \right), \ldots    \right\}
\end{align*}
\end{example}

\begin{figure}
\begin{center}
\includegraphics[width=3.5in]{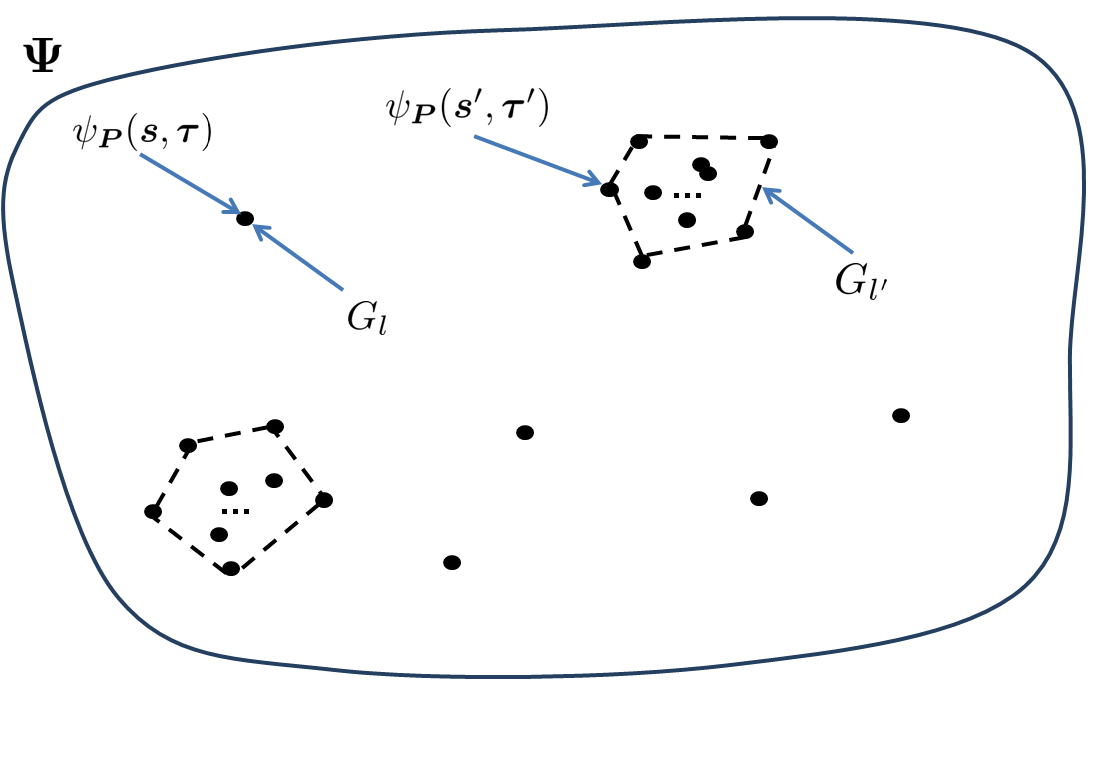}
\end{center}
\caption{Partition of ${\cal C}$ on $\boldsymbol{\Psi}$ based on $\boldsymbol{P}$ and $\tau_{\textrm{tr}}$. $G_l$ is a set with a single information state and $G_{l'}$ is a set with infinitely many information states.}
\label{fig:finite_partition}
\end{figure}

\subsection{Characterizing the set of optimal actions}

Next we define extensions of the sets $G_l$ on the belief space. For a set $B \in \boldsymbol{\Psi}$ let $B(\epsilon)$ be the {\em $\epsilon$-extension} of that set, i.e., 
\begin{align*}
B(\epsilon) = \{ \psi \in \boldsymbol{\Psi}: \psi \in B \textrm{ or } d_1(\psi,B) < \epsilon\},
\end{align*}
where $d_1(\psi,B)$ is the minimum $l_1$ distance between $\psi$ and any element of $B$. The $\epsilon$-extension of $G_l \in {\cal G}_{\tau_{\textrm{tr}}}$ corresponding to $\boldsymbol{P}$ is the $\epsilon$-extension of the convex-hull of the points $\psi_{\boldsymbol{P}}(\boldsymbol{s}, \boldsymbol{\tau})$ such that $(\boldsymbol{s}, \boldsymbol{\tau}) \in G_l$. Let $J_{l,\epsilon}$ denote the $\epsilon$-extension of $G_l$. Examples of $J_{l,\epsilon}$ on the belief space are given in Figure \ref{fig:epsilon_extension}.

\begin{figure}
\begin{center}
\includegraphics[width=3.5in]{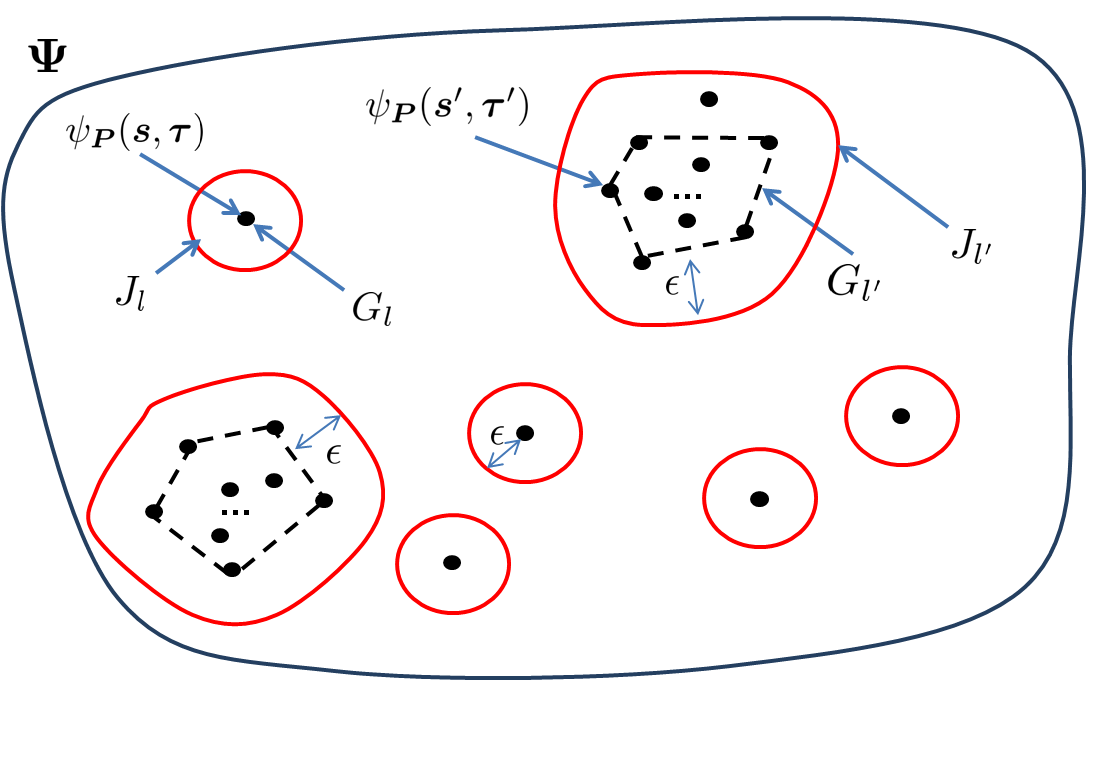}
\caption{$\epsilon$-extensions of the sets in ${\cal G}_{\tau_{\textrm{tr}}}$ on the belief space.}
\end{center}
\label{fig:epsilon_extension}
\end{figure}

Let the diameter of a set $B$ be the maximum distance between any two elements of \revq{that set}.  We note that when $\tau_{\textrm{tr}}$ increases, the diameter of the convex-hull of the points of an infinite-set in $ {\cal G}_{\tau_{\textrm{tr}}}$ decreases. 
The following lemma shows that when $\tau_{\textrm{tr}}$ is chosen large enough, there exists $\epsilon > 0$ such for all $G_l \in {\cal G}_{\tau_{\textrm{tr}}}$, we have non-overlapping $\epsilon$-extensions in which only a subset of the actions in $O^*(G_l; \boldsymbol{P})$ is optimal. 

\begin{lemma}\label{lemma:subsetoptimal}
For any $\boldsymbol{P}$ for which Assumption 1 holds, $\exists$ $\tau_{\textrm{tr}} > 0$ and $\epsilon>0$ such that for all $G_l \in {\cal G}_{\tau_{\textrm{tr}}}$, its $\epsilon$-extension $J_{l,\epsilon}$ has the following properties:
\begin{enumerate}[i]
\item For any $\psi \in J_{l,\epsilon}$, $O(\psi; \boldsymbol{P}) \subset O^*(G_l; \boldsymbol{P})$.
\item For $l \neq l'$, $J_{l,\epsilon} \cap J_{l',\epsilon} =\emptyset$.
\end{enumerate}
\end{lemma}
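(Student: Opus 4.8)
The plan is to use the continuity of the AROE solution $h_{\boldsymbol{P}}$ from Lemma \ref{s:urestls:lemma:ACOEexistence} to show that the optimal-action correspondence is upper semicontinuous, and then to couple this with the shrinking diameters of the infinite sets and the finiteness of $\mathcal{G}_{\tau^0}$. Writing $f_u(\psi) := \bar r(\psi,u) + \sum_{y\in S^u} V_{\boldsymbol{P}}(\psi,y,u)\, h_{\boldsymbol{P}}(T_{\boldsymbol{P}}(\psi,y,u))$, so that $O(\psi;\boldsymbol{P})=\arg\max_u f_u(\psi)$, I would first check that each $f_u$ is continuous on $\Psi$: the maps $\bar r(\cdot,u)$ and $V_{\boldsymbol{P}}(\cdot,y,u)$ are linear, and under Assumption \ref{s:urestls:assump:1} one has $V_{\boldsymbol{P}}(\psi,y,u)>0$ for all $\psi\in\Psi$ and $y\in S^u$ (every such observation has positive probability), so $T_{\boldsymbol{P}}$ is continuous and, composed with the continuous $h_{\boldsymbol{P}}$, yields continuity of $f_u$. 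Upper semicontinuity then follows by a margin argument: at any $\psi^*$ the gap $\Delta(\psi^*)=\min_{u\notin O(\psi^*;\boldsymbol{P})}(\max_v f_v(\psi^*)-f_u(\psi^*))$ is strictly positive, and continuity of the finitely many $f_u$ produces a radius $\epsilon(\psi^*)>0$ such that $O(\psi;\boldsymbol{P})\subseteq O(\psi^*;\boldsymbol{P})$ whenever $\|\psi-\psi^*\|_1<\epsilon(\psi^*)$, since a strictly suboptimal action at $\psi^*$ stays suboptimal throughout that ball.

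To prove (i), I note that $\psi_{\boldsymbol{P}}$ maps each $G_l$ to a convex region $R_l\ni\psi^*(G_l)$ of diameter $d_l$, with $d_l=0$ for single-information-state sets, so that $J_{l,\epsilon}\subseteq B(\psi^*(G_l),\,d_l+\epsilon)$ and the upper-semicontinuity bound gives $O(\psi;\boldsymbol{P})\subseteq O^*(G_l;\boldsymbol{P})$ on $J_{l,\epsilon}$ as soon as $d_l+\epsilon<\epsilon(\psi^*(G_l))$. Since $\mathcal{G}_{\tau^0}$ is a finite partition, $\epsilon_{\min}:=\min_l\epsilon(\psi^*(G_l))>0$; invoking the shrinking-diameter property recorded before the lemma I choose $\tau^0$ large enough that every infinite set has $d_l<\epsilon_{\min}/2$, and then any $\epsilon<\epsilon_{\min}/2$ secures (i) for all sets at once.

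For (ii), the finitely many centers $\psi^*(G_l)$ of this fixed partition are distinct, so $\delta:=\min_{l\neq l'}\|\psi^*(G_l)-\psi^*(G_{l'})\|_1>0$, and shrinking $\epsilon$ below $(\delta-\max_l d_l)/2$ keeps the extensions $J_{l,\epsilon}$ pairwise disjoint. The final $\epsilon$ is then the minimum of the thresholds coming from the two parts.

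I expect the main obstacle to be the coupling between $\tau^0$ and $\epsilon$: enlarging $\tau^0$ shrinks the infinite sets but simultaneously refines the partition, introducing single-information-state centers that may lie near a decision boundary and thereby depress $\epsilon_{\min}$. The delicate point is to guarantee that the infinite sets' diameters fall below the semicontinuity radii at their (near-stationary) centers; here I would lean on the two facts highlighted before the lemma—that the number of infinite sets is constant in $\tau^0$ and that their centers concentrate at the stationary-distribution beliefs—so that their radii stay bounded below while their diameters vanish, whereas the finitely many single-information-state sets are accommodated automatically by taking $\epsilon$ small. Confirming distinctness of the centers under Assumption \ref{s:urestls:assump:1}, which is what makes $\delta>0$ in (ii), is the remaining routine verification.
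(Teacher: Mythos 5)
Your proof is correct and follows essentially the same route as the paper's: a continuity-plus-margin argument at each center $\psi^*(G_l;\boldsymbol{P})$ exploiting the strictly positive suboptimality gap (the paper works inside balls $B_{2\epsilon}(\psi^*(G_l;\boldsymbol{P}))$ with a half-gap bound), then choosing $\tau^0$ so the diameters of the infinitely-many-state sets fall below $\epsilon$ and shrinking $\epsilon$ to get disjoint extensions. If anything, you are more explicit than the paper about the coupling between $\tau^0$ and $\epsilon$ (new single-information-state centers appearing as $\tau^0$ grows), a point the paper's own proof passes over silently by fixing $\epsilon$ first and then picking $\tau^0$.
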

\begin{proof}
For $G_l \in {\cal G}_{\tau_{\textrm{tr}}}$ consider {its center} $\psi^*(G_l; \boldsymbol{P})$. For any $\psi \in \boldsymbol{\Psi}$ the suboptimality gap is defined as
\begin{align}
\Delta(\psi, \boldsymbol{P}) &= \max_{u \in U} \left\{ \bar{r}(\psi,u) + \sum_{y \in S^u} V_{\boldsymbol{P}}(\psi,y,u) h(T_{\boldsymbol{P}}(\psi,y,u))  \right\} \notag \\
&- \max_{u \in U - O(\psi; \boldsymbol{P})} \left\{ \bar{r}(\psi,u) + \sum_{y \in S^u} V_{\boldsymbol{P}}(\psi,y,u) h(T_{\boldsymbol{P}}(\psi,y,u)) \right\}. \label{eqn:beliefsuboptimality}
\end{align}
Since $r, h, V$ and $T$ are continuous in $\psi$, we can find an $\epsilon>0$ such that for any $\psi \in B_{2 \epsilon}(\psi^*(G_l; \boldsymbol{P}))$ and for all $u \in U$, 
\begin{align}
& \left| \bar{r}(\psi^*(G_l; \boldsymbol{P}),u) + \sum_{y \in S^u} V_{\boldsymbol{P}}(\psi^*(G_l; \boldsymbol{P}),y,u) h(T_{\boldsymbol{P}}(\psi^*(G_l; \boldsymbol{P}),y,u)) \right. \notag \\
& \left. - \bar{r}(\psi,u) + \sum_{y \in S^u} V_{\boldsymbol{P}}(\psi,y,u) h(T_{\boldsymbol{P}}(\psi,y,u)) \right| 
< \Delta(\psi^*(G_l; \boldsymbol{P}), \boldsymbol{P})/2,
\end{align}
and $B_{2 \epsilon}(\psi^*(G_l; \boldsymbol{P})) \cap B_{2 \epsilon}(\psi^*(G_{l'}; \boldsymbol{P})) = \emptyset$ for $l \neq l'$.
Therefore, any action $u$ which is not in $O^*(G_l; \boldsymbol{P})$ cannot be optimal for any $\psi \in B_{2 \epsilon}(\psi^*(G_l; \boldsymbol{P}))$.
Since the diameter of the convex-hull of the sets that contains infinitely many information states decreases with $\tau_{\textrm{tr}}$, there exists $\tau_{\textrm{tr}}>0$ such that for any $G_l \in {\cal G}_{\tau_{\textrm{tr}}}$, the diameter of the convex-hull $J_{l,0}$ is less than $\epsilon$. Let $\tau_{\textrm{tr}}$ be the smallest integer such that this holds. Then, the $\epsilon$-extension of the convex hull $J_{l,\epsilon}$ is included in the ball $B_{2 \epsilon}(\psi^*(G_l; \boldsymbol{P}))$ for all $G_l \in {\cal G}_{\tau_{\textrm{tr}}}$. This concludes the proof. 
\end{proof}

\begin{remark}\label{remark:unique}
According to Lemma \ref{lemma:subsetoptimal}, although we can find an $\epsilon$-extension in which a subset of $O^*(G_l; \boldsymbol{P})$ is optimal for any $\psi, \psi' \in J_{l,\epsilon}$, the set of optimal actions for $\psi$ may be different from the set of optimal actions for $\psi'$. Note that the player's estimated belief $\hat{\psi}_t$ is different from the true belief $\psi_t$. If no matter how close $\hat{\psi}_t$ is to $\psi_t$, their respective sets of optimal actions are different, then the player can make a suboptimal decision even if it knows the optimal policy.
Thus in this case the performance loss of the player, which can be bounded by the number of suboptimal decisions, may grow linearly over time. 
This turns out to be a major challenge.  In this paper, we present two different approaches that lead to performance loss (regret) growing logarithmically in time. The first approach is based on an assumption about the structure of the optimal policy, while the second approach is to construct an algorithm that will almost always choose near-optimal actions, whose sub-optimality can be controlled by a function of the time horizon $T$. 
\rdec{We shall take the first approach below and the second approach in Section \ref{sec:extensions_finpar}.} 
\end{remark}

\begin{assumption} \label{s:urestls:assump:esuboptimality}
There exists $\tau_{\textrm{tr}} \in \mathbb{N}$ such that for any $G_l \in {\cal G}_{\tau_{\textrm{tr}}}$, there exists $\epsilon>0$ such that the same subset of $O^*(G_l; \boldsymbol{P})$ is optimal for any $\psi \in J_{l,\epsilon} - \psi^*(G_l; \boldsymbol{P})$.
\end{assumption}

When this assumption is correct, if $\psi_t$ and $\hat{\psi}_t$ are sufficiently close to each other, then the player will always chose an optimal arm. Assume that this assumption is false. Consider the {\em stationary} information states for which $\tau^k = \infty$ for some arm $k$. Then for any $\tau_{\textrm{tr}} > 0$, there exists a set $G_l \in {\cal G}_{\tau_{\textrm{tr}}}$ and a sequence of information states $(\boldsymbol{s}, \boldsymbol{\tau})_n$ , $n=1,2,\ldots$, such that $\psi_{\boldsymbol{P}}((\boldsymbol{s}, \boldsymbol{\tau})_n)$ converges to $\psi^*(G_l; \boldsymbol{P})$ but there exists infinitely many $n$'s for which $O((\boldsymbol{s}, \boldsymbol{\tau})_n; \boldsymbol{P}) \neq O((\boldsymbol{s}, \boldsymbol{\tau})_{n+1}; \boldsymbol{P})$.

For simplicity of analysis, we focus on the following version of Assumption \ref{s:urestls:assump:esuboptimality}, although our results in Section \ref{single:sec:strong-analysis} will also hold when Assumption \ref{s:urestls:assump:esuboptimality} is true.

\begin{assumption} \label{s:urestls:assump:esuboptimality2}
There exists $\tau_{\textrm{tr}} \in \mathbb{N}$ such that for any $G_l \in {\cal G}_{\tau_{\textrm{tr}}}$, a single action is optimal for $\psi^*(G_l ; \boldsymbol{P})$.
\end{assumption}

\newt{In the next lemma, we show that when the set of transition probability matrices, i.e., $\boldsymbol{P}$, is drawn from a continuous distribution with a bounded density function (which is unknown to the player), before the play begins, then Assumption \ref{s:urestls:assump:esuboptimality2} will hold with probability one. In other words, the set of the set of transition probability matrices for which Assumption \ref{s:urestls:assump:esuboptimality2} does not hold is a measure zero subset of the set $\boldsymbol{\Xi}$.

\begin{lemma} \label{lemma:measurezero}
Let $f_{\boldsymbol{\Xi}}(.)$ be the density function of the distribution from which $\boldsymbol{P}$ is drawn as an instance of the bandit problem. Let $\sup_{\boldsymbol{P} \in \boldsymbol{\Xi}} f_{\boldsymbol{\Xi}}(\boldsymbol{P}) \leq f_{\max} < \infty$. Then, Assumption \ref{s:urestls:assump:esuboptimality2} holds with probability 1. In other words, when $w$ is a realization of transition probability matrices, and ${\cal A}$ be the event that Assumption \ref{s:urestls:assump:esuboptimality2} holds, then $P(w \in {\cal A}) =1 $.
\end{lemma}
\begin{proof}
If Assumption \ref{s:urestls:assump:esuboptimality2} does not hold for some $\boldsymbol{P} \in \boldsymbol{\Xi}$, this means that there exists $G_l \in {\cal G}_{\tau_{\textrm{tr}}}$ such that for some distinct arms $u$ and $u'$ we have
\begin{align}
 \bar{r}_{\boldsymbol{P}}(\boldsymbol{s}_l,u) + \sum_{y \in S^u} V_{\boldsymbol{P}}(\boldsymbol{s}_l,y,u) h_{\boldsymbol{P}}(T_{\boldsymbol{P}}(\boldsymbol{s}_l,y,u)) =  \bar{r}(\boldsymbol{s}_l,u') + \sum_{y \in S^{u'}} V_{\boldsymbol{P}}(\boldsymbol{s}_l,y,u') h_{\boldsymbol{P}}(T_{\boldsymbol{P}}(\boldsymbol{s}_l,y,u')), \label{eqn:continuation}
\end{align}
where $\bar{r}_{\boldsymbol{P}}(\boldsymbol{s}_l,u)$, $V_{\boldsymbol{P}}(\boldsymbol{s}_l,y,u)$ and $h_{\boldsymbol{P}}(T_{\boldsymbol{P}}(\boldsymbol{s}_l,y,u))$ are equivalents of the expressions given in Section \ref{single:sec:strong-aroe} for the case when we derive the optimal policy with respect to the countable information state instead of the belief state, where $s_l$ is the center information state of $G_l$. 

In order to have $P(w \in {\cal A}) < 1$, the integral of $f_{\boldsymbol{\Xi}}(.)$ over the subset of $\boldsymbol{\Xi}$ for which Assumption \ref{s:urestls:assump:esuboptimality2} does not hold should be positive. This means that there should be at least one $\boldsymbol{P}$ for which Assumption \ref{s:urestls:assump:esuboptimality2} does not hold for the $\epsilon$ neighborhood of $\boldsymbol{P}$, for some $\epsilon>0$. Next, we will show that for any $\epsilon>0$, there exists $\boldsymbol{P}'$ such that $||\boldsymbol{P}' - \boldsymbol{P} || \leq \epsilon$ and Assumption \ref{s:urestls:assump:esuboptimality2} holds for $\boldsymbol{P}'$. 
Let $ \epsilon_{p,\min} = \min_{k \in {\cal K}, i,j \in S^k} p^k_{i,j}$ and $\epsilon_s = \min\{ \epsilon/(2S_{\max}), \epsilon_{p, \min}/2 \}$.  For an arm $k$ let $\bar{x}_k$ be the the state with the highest reward and $\underline{x}_k$ be state with the lowest reward. We form $\boldsymbol{P}'$ from $\boldsymbol{P}$ as follows: For all arms other than $u$ and $u'$, state transition probabilities of $\boldsymbol{P}'$ is the same as $\boldsymbol{P}$. For arm $u$, let $(p')^u_{j,\underline{x}_u} = (p)^u_{j,\underline{x}_u} - \epsilon_s$ and $(p')^u_{j,\bar{x}_u} = (p)^u_{j,\bar{x}_u} + \epsilon_s$ for all $j \in S^u$. For arm $u'$ let $(p')^{u'}_{j,\bar{x}_{u'}} = (p)^{u'}_{j,\bar{x}_{u'}} - \epsilon_s$ and $(p')^{u'}_{j,\underline{x}_{u'}} = (p)^{u'}_{j,\underline{x}_{u'}} + \epsilon_s$ for all $j \in S^{u'}$. By construction we have $||\boldsymbol{P}' - \boldsymbol{P} || \leq \epsilon$. 

Due to the special assignment of probabilities in $\boldsymbol{P}'$, for any information state $(\boldsymbol{s}, \boldsymbol{\tau})$, we have $\bar{r}_{\boldsymbol{P}'}((\boldsymbol{s}, \boldsymbol{\tau}),u) > \bar{r}_{\boldsymbol{P}}((\boldsymbol{s}, \boldsymbol{\tau}),u)$ and  $\bar{r}_{\boldsymbol{P}'}((\boldsymbol{s}, \boldsymbol{\tau}),u') < \bar{r}_{\boldsymbol{P}}((\boldsymbol{s}, \boldsymbol{\tau}),u')$. Since arms evolve independently of each other, the continuation value of choosing arm $u$ in information state $(\boldsymbol{s}, \boldsymbol{\tau})$ under $\boldsymbol{P}'$ is greater than or equal to the continuation value of choosing arm $u$ under $\boldsymbol{P}$, while the continuation value of choosing arm $u'$ in information state $(\boldsymbol{s}, \boldsymbol{\tau})$ under $\boldsymbol{P}'$  is less than or equal to the continuation value of choosing arm $u'$ under $\boldsymbol{P}$. These and (\ref{eqn:continuation}) together implies that 
\begin{align*}
 \bar{r}_{\boldsymbol{P}'}(\boldsymbol{s}_l,u) + \sum_{y \in S^u} V_{\boldsymbol{P}'}(\boldsymbol{s}_l,y,u) h_{\boldsymbol{P}'}(T_{\boldsymbol{P}'}(\boldsymbol{s}_l,y,u)) >  \bar{r}(\boldsymbol{s}_l,u') + \sum_{y \in S^{u'}} V_{\boldsymbol{P}'}(\boldsymbol{s}_l,y,u') h_{\boldsymbol{P}'}(T_{\boldsymbol{P}'}(\boldsymbol{s}_l,y,u')).
\end{align*}

Note that for $\boldsymbol{P}'$, although the tie between arms $u$ and $u'$ is broken in favor of arm $u$ for center belief $s_l$, there can be another center belief $s_{l'}$ of $G_{l'} \in {\cal G}_{\tau_{\textrm{tr}}}$ for which two different arms $a$ and $a'$ became optimal in $\boldsymbol{P}'$, while only one arm is optimal for $s_{l'}$ in $\boldsymbol{P}$. If such a thing happens, then we can define a new transition probability matrix $\boldsymbol{P}''$ from $\boldsymbol{P}$ by subtracting and adding $\epsilon_s/2$ similar to the construction of $\boldsymbol{P}'$. This will both break the ties between $u$, $u'$ and $a$, $a'$ in favor of $u$ and $a$. Since the number of arms and the number of center beliefs in ${\cal G}_{\tau_{\textrm{tr}}}$ is finite, after repeating this procedure for a finite number of times we will find a $\tilde{\boldsymbol{P}}$ for which none of the center beliefs have more than one optimal arm such that $||\tilde{\boldsymbol{P}} - \boldsymbol{P} || \leq \epsilon$.
\end{proof}

Lemma \ref{lemma:measurezero} implies that even though there might exist some $\boldsymbol{P}$ for which Assumption \ref{s:urestls:assump:esuboptimality2} does not hold, for a perturbation of $\boldsymbol{P}$ Assumption \ref{s:urestls:assump:esuboptimality2} will hold.  Assumption \ref{s:urestls:assump:esuboptimality2} may not hold for some symmetric setting such as the case when there are identical arms. We illustrate this in the next example. 
\begin{example}
Consider 3 arms, with $S^k = \{0,1\}$ and $p^k_{01} := p_{01}$, $p^k_{10} := p_{10}$ and $ p_{01} +  p_{10}< 1$ for $k=\{1,2,3  \}$. Under this setting, it is shown in \cite{ahmad2009optimality} that the following myopic policy is optimal: At each time step, select the arm with the highest probability of being in state $1$. 
Consider the center belief $(\boldsymbol{\pi}^1, (p_{00}, p_{01}),\boldsymbol{\pi}^3)$ which occurs right after arm $2$ is played and state $0$ is observed. Since the arms are symmetric we have $\boldsymbol{\pi}^1 = \boldsymbol{\pi}^3$. Since the myopic policy is optimal it would be optimal to select either of arm $1$ and $3$ in the above center belief. Therefore Assumption \ref{s:urestls:assump:esuboptimality2} does not hold for this case.  
\end{example}
\begin{remark}
Although Assumption \ref{s:urestls:assump:esuboptimality2} does not hold in the example, 
note however the center belief $(\boldsymbol{\pi}^1, (p_{00}, p_{01}),\boldsymbol{\pi}^3)$ can only be reached under the following condition: (1) if arms 1 and 3 start with the same initial belief and are never played, or (2) arms 1 and 3 have not been played for infinitely long.  This is because only one arm can be played at a time, which results in asynchronous update of the belief states once an arm is played. Therefore in practice this center belief will never be reached in finite time.  
Therefore, in reality AREP never computes the values of its indices at this center belief. But AREP might be required to compute the value of its indices near this center belief.
\end{remark}
}

\begin{corollary} \label{s:urestls:corr:esuboptimality2}
Let $\tau_{\textrm{tr}} \in \mathbb{N}$ be the minimum integer such that Assumption \ref{s:urestls:assump:esuboptimality2} holds. Then, there exists $\bar{\epsilon}>0$, depending on $\tau_{\textrm{tr}}$, such that for all $\epsilon \leq \bar{\epsilon}$ and any $\psi \in J_{l,\epsilon}$, a single action is optimal. Also for any $\epsilon > \bar{\epsilon}$, there exists a $J_{l,\epsilon}$ and $\psi, \psi' \in J_{l,\epsilon}$ such that different actions are optimal at $\psi$ and $\psi'$.
\end{corollary}
\begin{proof}
This result follows from Assumption \ref{s:urestls:assump:esuboptimality2} and Lemma \ref{lemma:subsetoptimal}.
\end{proof}

\begin{remark}\label{remark:measure1}
Although we do not know of a way to check if Assumption \ref{s:urestls:assump:esuboptimality2} holds given a set of transition probability matrices $\boldsymbol{P}$, we \rdec{conjecture} that it holds for a large set of $\boldsymbol{P}$s \rdec{for the following reason}. 
%
%
This is because the player's selection does not affect state transitions of the arms; it only affects the player's information state. 
Moreover, each arm evolves independently from each other.  If $\boldsymbol{P}$ is arbitrarily selected from $\boldsymbol{\Xi}$, and the state rewards $r^k_x$, $x \in S^k$ are arbitrarily selected from $[0, r_{\max}]$, then at any information state $(\boldsymbol{s}, \boldsymbol{\tau}) \in {\cal C}$, the probability that the reward distribution of two arms are the same will be zero.  We therefore claim that Assumption \ref{s:urestls:assump:esuboptimality2} holds with probability one if the arm rewards and $\boldsymbol{P}$ are chosen from the uniform distribution on $\boldsymbol{\Psi} \times [0, r_{\max}]$. In other words, the set of arm rewards and transition probabilities for which Assumption \ref{s:urestls:assump:esuboptimality2} does not hold is a measure zero subset of $\boldsymbol{\Psi} \times [0, r_{\max}]$.
\end{remark}

 \subsection{Implications of Assumptions 1, 2 and 3 for the iid bandit problem}
\aug{
To facilitate the understanding of Assumptions 1, 2 and 3, we explain their meanings for the iid bandit problem which is a special case of the URBP. 
In the URBP we consider $S^k$ is finite for each $k \in {\cal K}$ and the arm rewards are deterministic functions of the states. Hence, for the iid bandit problem which is a special case of the URBP we assume that
the reward of arm $k$ is drawn independently from a distribution $Q^k$ over a finite set $S^k$.
Assumption 1 says that for any arm $k$ and any state $i \in S^k$, every other state $j \in S^k$ is reachable in one time slot. This assumption is automatically satisfied in the iid setting for any arm reward distribution $Q^k$.\footnote{If there is a state $j \in S^k$ for which probability of reaching that state from some state $i \in S^k$ in one step is zero, then due to the iid assumption, probability of reaching state $j$ in one step from any other state must be zero, and hence the probability that state $j$ appears at any time slot is zero. Such states can be discarded since they will not appear with probability one.}

Assumption 2 says that for any set of arm reward distributions $\boldsymbol{Q} = (Q^1, \ldots, Q^K)$, there exists $\delta>0$ such that for any other set of arm reward distributions $\tilde{\boldsymbol{Q}} = (\tilde{Q}^1, \ldots, \tilde{Q}^K)$ such that $|| Q^k - \tilde{Q}^k  ||_1 \leq \delta$ for all $k \in {\cal K}$, the set of optimal actions (actions with the highest expected rewards) when the set of arm reward distributions is $\tilde{\boldsymbol{Q}} $ is the same subset of the set of optimal actions when the set of arm reward distributions is $\boldsymbol{Q}$. 
For the iid problem expected reward of arm $k$ under $Q^k$ is given as
\begin{align}
\mu_k(Q^k) := \sum_{x \in S} x Q^k(x) .     \notag
\end{align}
Let 
\begin{align}
{\cal K}^*(\boldsymbol{Q}) := \argmax_{k \in {\cal K}} \mu_k(Q^k)   ,   \notag
\end{align}
be the set of optimal arms under $\boldsymbol{Q}$,
$\mu^*(\boldsymbol{Q}) := \max_{k \in {\cal K}} \mu_k(Q^k)$ and
\begin{align}
\Delta(\boldsymbol{Q}) =  \mu^*(\boldsymbol{Q}) - \max_{k \in {\cal K} - {\cal K}^*(\boldsymbol{Q}) }  \mu_k(Q^k). \notag
\end{align}
Clearly if $|\mu_k(Q^k) - \mu_k(\tilde{Q}^k)| <\Delta(\boldsymbol{Q})/2 $ for all $k \in {\cal K}$, we have ${\cal K}^*(\tilde{\boldsymbol{Q}}) \subset {\cal K}^*(\boldsymbol{Q})$.
Assuming that states (arm rewards) are in $[0,1]$, this holds when 
\begin{align}
|Q^k(x) - \tilde{Q}^k(x)| \leq \Delta(\boldsymbol{Q})/ (2 S_{\max}) ,  \label{eqn:valass2iid}
\end{align}
for all $k \in {\cal K}$ and $x \in S^k$.
Recall that the belief vector $\psi$ is an $\boldsymbol{S}$ dimensional vector whose $\boldsymbol{x}$th component corresponds to the probability that the joint state is $\boldsymbol{x}$. For the iid setting the belief at time $t$ is a constant, since the reward distribution at time $t+1$ is independent from the reward distribution at time $t$.
Hence under $\boldsymbol{Q}$ the $\boldsymbol{x}$th component of the belief vector is equal to
\begin{align}
\psi_{\boldsymbol{x}}(\boldsymbol{Q})  = \prod_{k \in {\cal K}} Q^k(x_k) .   \notag
\end{align}
Let $\epsilon = \Delta(\boldsymbol{Q})/ (2 S_{\max}^K )$.
Since
\begin{align}
||\psi(\boldsymbol{Q})  - \psi(\tilde{\boldsymbol{Q}}) ||_1
& = \sum_{\boldsymbol{x} \in \boldsymbol{S}} \left|\prod_{k \in {\cal K}} Q^k(x_k)  - \prod_{k \in {\cal K}} \tilde{Q}^k(x_k) \right| ,
\end{align}
$||\psi(\boldsymbol{Q})  - \psi(\tilde{\boldsymbol{Q}}) ||_1 \leq \epsilon $ implies that $| \psi_{\boldsymbol{x}}(\boldsymbol{Q})   - \psi_{\boldsymbol{x}}(\tilde{\boldsymbol{Q}}) | \leq \epsilon$ for all $\boldsymbol{x} \in \boldsymbol{S}$.
We also have for any $x \in S^k$
\begin{align}
Q^k(x) = \sum_{\boldsymbol{x} \in \boldsymbol{S}: x_k = x} \psi_{\boldsymbol{x}}(\boldsymbol{Q})    .    \notag
\end{align}
Hence we have for any $k \in {\cal K}$, $x \in S^k$
\begin{align}
 |Q^k(x) - \tilde{Q}^k(x)| \leq  \sum_{\boldsymbol{x} \in \boldsymbol{S}: x_k = x}  | \psi_{\boldsymbol{x}}(\boldsymbol{Q}) - \psi_{\boldsymbol{x}}(\tilde{\boldsymbol{Q}}) |    \leq  \Delta(\boldsymbol{Q})/ (2 S_{\max} ) ,\notag
\end{align}
which is equivalent to (\ref{eqn:valass2iid}).
Hence, ${\cal K}^*(\tilde{\boldsymbol{Q}}) \subset {\cal K}^*(\boldsymbol{Q})$ holds in the iid setting with $\epsilon = \Delta(\boldsymbol{Q})/ (2 S_{\max}^K )$ (given in Assumption 2) for any set of arm reward distributions $\boldsymbol{Q}$.
However, Assumption 2 requires that for any $\boldsymbol{Q}$, ${\cal K}^*(\tilde{\boldsymbol{Q}})$ must be the same for all $\tilde{\boldsymbol{Q}}$ such that $||\psi(\boldsymbol{Q})  - \psi(\tilde{\boldsymbol{Q}}) ||_1 \leq \epsilon$. This can only hold in the iid setting when there is a unique optimal arm for $\boldsymbol{Q}$, which is given by Assumption 3.

The iid setting in which there are multiple optimal arms also exhibits a special symmetric structure. As we proved in Lemma 3, such symmetric structures appear only on a measure zero subset of the set of arm reward distributions. 
}
 \section{Analysis of the Regret of AREP} \label{single:sec:strong-analysis}
 
In this section we show that when $\boldsymbol{P}$ is such that Assumptions \ref{s:urestls:assump:1} and \ref{s:urestls:assump:esuboptimality2} hold and when using AREP with $f(t) = L \log t$ with $L$ sufficiently large (i.e., $L \geq C(\boldsymbol{P})$, a constant dependent  on $\boldsymbol{P}$), the regret due to explorations is logarithmic in time, while the regret due to all other terms are finite, independent of $t$. 
Note that since the player does not know $\boldsymbol{P}$, it cannot know how large it should chose $L$. For simplicity we assume that the player starts with an $L$ that is large enough without knowing $C(\boldsymbol{P})$. We also prove a near-logarithmic regret result in Section \ref{sec:extensions_explore} when the player sets $f(t) = L(t) \log t$, where $L(t)$ is a positive increasing function over time such that $\lim_{t \rightarrow \infty} L(t) = \infty$.

In what follows, we first provide an upper bound on the regret as the summation of a number of components. We then proceed to bound these individual terms separately. 
%
%
%

 \subsection{An Upper Bound on the Regret}  \label{single:sec:strong-regretbound}
 For any admissible policy $\alpha$, the regret with respect to the optimal $T$ horizon policy is given in (\ref{eqn:probform1}), which we restate below:
\begin{align*}
\sup_{\gamma \in \Gamma} \left( E^{\boldsymbol{P}}_{\psi_0, \gamma}\left[\sum_{t=1}^T  r^{\gamma(t)}(t)\right]\right) - E^{\boldsymbol{P}}_{\psi_0, \alpha}\left[\sum_{t=1}^T r^{\alpha(t)}(t)\right].
\end{align*}
We first derive the regret with respect to the optimal policy as a function of the number of suboptimal plays. Before proceeding, we introduce expressions to compactly represent the RHS of the AROE. Let
\begin{align}
{\cal L}(\psi,u,h,\boldsymbol{P}) &:= \bar{r}(\psi,u) + (V(\psi,.,u) \bullet h(T_{\boldsymbol{P}}(\psi,.,u))) \notag \\
{\cal L}^*(\psi,\boldsymbol{P}) &:= \max_{u \in U} {\cal L}(\psi,u,h_{\boldsymbol{P}},\boldsymbol{P}) \notag \\
\Delta(\psi,u;\boldsymbol{P}) &:= {\cal L}^*(\psi,\boldsymbol{P}) - {\cal L}(\psi,u,h_{\boldsymbol{P}},\boldsymbol{P}) ~,  \label{s:urestls:eqn:suboptimality}
\end{align}
where the last one denotes the degree of suboptimality of action $u$ at information state $\psi$ when the set of transition probability matrices is $\boldsymbol{P}$. 

From Proposition 1 in \cite{burnetas1997optimal}, we have for all $\gamma \in \Gamma$
\begin{align}
R^\gamma_{(\psi_0;\boldsymbol{P})}(T) = \sum_{t=1}^{T} E^{\boldsymbol{P}}_{\psi_0,\gamma}[\Delta(\psi_t, U_t; \boldsymbol{P})] \rdec{+ \bar{C}_{\boldsymbol{P}}}, \label{eqn:defnregret}
\end{align}
for some constant $\bar{C}_{\boldsymbol{P}}$ dependent on $\boldsymbol{P}$, and $U_t$ is the random variable denoting the arm selected by the player at time $t$ which depends on the policy used. 
We have used the subscript $(\psi_0;\boldsymbol{P})$ to denote the dependence of regret on the initial belief and the transition probabilities. We assume that initially all the arms are sampled once thus the initial belief is $\psi_0 = \psi_{\boldsymbol{P}}((\boldsymbol{s}_0, \boldsymbol{\tau}_0))$. 
For the true set of transition probability matrices $\boldsymbol{P}$, let $\tau_{\textrm{tr}}$ and $\bar{\epsilon}$ be the numbers given in Corollary \ref{s:urestls:corr:esuboptimality2}.
Denote the $\bar{\epsilon}$-extension of the set $G_l \in {\cal G}_{\tau_{\textrm{tr}}}$ by $J_{l, \bar{\epsilon}}$. Note that at any $t$, the belief $\psi_t \in J_{l, \bar{\epsilon}}$ for some $l$. For simplicity of notation, when $\bar{\epsilon}$ is clear from the context we will re-write $J_{l, \bar{\epsilon}}$ as $J_l$.
Let
\begin{align*}
\bar{\Delta}(J_l, u; \boldsymbol{P}) := \sup_{\psi \in J_l} \Delta(\psi, u; \boldsymbol{P}).
\end{align*}
Note that if $U_t \in O(\psi_t;\boldsymbol{P})$ then $\Delta(\psi_t, U_t; \boldsymbol{P})=0$; otherwise $U_t \notin O(\psi_t; \boldsymbol{P})$, and then $\Delta(\psi_t, U_t; \boldsymbol{P}) \leq \bar{\Delta}(J_l, U_t; \boldsymbol{P})$ with probability one. Let
\begin{align*}
N_T(J_l,u) := \sum_{t=1}^{T} I(\psi_t \in J_l, U_t =u).
\end{align*}
%
\begin{lemma} \label{s:urestls:eqn:regret}
For any admissible policy $\gamma$, 
\begin{align*}
R^\gamma_{(\psi_0;\boldsymbol{P})}(T) \leq \sum_{l=1}^{A(\tau_{\textrm{tr}})} \sum_{u \notin O(J_l;\boldsymbol{P})} E^{\boldsymbol{P}}_{\psi_0,\gamma} [N_T(J_l,u)] \bar{\Delta}(J_l, u; \boldsymbol{P})+ \bar{C}_{\boldsymbol{P}}. 
\end{align*}
\end{lemma}
\begin{proof}
\begin{align}
R^\gamma_{(\psi_0;\boldsymbol{P})}(T) 
&\leq  \sum_{t=1}^{T} E^{\boldsymbol{P}}_{\psi_0,\gamma} \left[\sum_{l=1}^{A(\tau_{\textrm{tr}})} \sum_{u \notin O(J_l;\boldsymbol{P})} I(\psi_t \in J_l, U_t =u) \bar{\Delta}(J_l, u; \boldsymbol{P}) \right] + \bar{C}_{\boldsymbol{P}} \notag \\
&= \sum_{l=1}^{A(\tau_{\textrm{tr}})} \sum_{u \notin O(J_l;\boldsymbol{P})} E^{\boldsymbol{P}}_{\psi_0,\gamma} \left[\sum_{t=1}^{T} I(\psi_t \in J_l, U_t =u) \right] \bar{\Delta}(J_l, u; \boldsymbol{P}) + \bar{C}_{\boldsymbol{P}} \notag \\
&= \sum_{l=1}^{A(\tau_{\textrm{tr}})} \sum_{u \notin O(J_l;\boldsymbol{P})} E^{\boldsymbol{P}}_{\psi_0,\gamma} [N_T(J_l,u)] \bar{\Delta}(J_l, u; \boldsymbol{P}) + \bar{C}_{\boldsymbol{P}}. \notag
\end{align}
\end{proof}

Now consider AREP, which is denoted by $\alpha$. We will upper bound $N_T(J_l,u)$ for suboptimal actions $u$ by a sum of expressions which we will then bound individually. Let ${\cal E}_t$ be the event that AREP is in an exploitation step at time $t$ and 
%
${\cal F}_t(\epsilon) := \left\{ \left\|\hat{h}_t - h_{\boldsymbol{P}}\right\|_{\infty} \leq \epsilon \right\}$.
%
For an event ${\cal F}$, denote its complement by ${\cal F}^c$. 
\aug{For any $\epsilon > 0$}, consider the following random variables which count the number of times a certain event has happened by time $T$ (the dependence on $T$ is dropped from the notation for convenience).
\begin{align}
D_{1,1}(\epsilon,J_l,u) &:= \sum_{t=1}^{T} I \left(\hat{\psi}_t \in J_l, U_t=u, {\cal E}_t, {\cal F}_t(\epsilon) \right), \notag \\
D_{1,2}(\epsilon) &:= \sum_{t=1}^{T} I({\cal E}_t, {\cal F}^c_t(\epsilon)), \notag \\
D_1(\epsilon,J_l,u) &:= D_{1,1}(\epsilon,J_l,u) + D_{1,2}(\epsilon), \notag \\
D_{2,1}(\epsilon) &:= \sum_{t=1}^{T} I(|| \psi_t-\hat{\psi}_t ||_1 > \epsilon, {\cal E}_t), \notag \\
D_{2,2}(\epsilon, J_l) &:= \sum_{t=1}^{T} I(|| \psi_t-\hat{\psi}_t ||_1 \leq \epsilon,   
\hat{\psi}_t \notin J_l,\psi_t \in J_l, {\cal E}_t), \notag \\
D_2(\epsilon, J_l) &:= D_{2,1}(\epsilon) + D_{2,2}(\epsilon, J_l). \notag
\end{align}
\begin{lemma} \label{s:urestls:lemma:regretbound}
For any $\boldsymbol{P}$ satisfying Assumption \ref{s:urestls:assump:esuboptimality2}, we have 
\begin{align}
E^{\boldsymbol{P}}_{\psi_0,\gamma} [N_T(J_l,u)] &\leq E^{\boldsymbol{P}}_{\psi_0,\gamma}[D_1(\epsilon,J_l,u)] 
+ E^{\boldsymbol{P}}_{\psi_0,\gamma}[D_2(\epsilon, J_l)]   + E^{\boldsymbol{P}}_{\psi_0,\gamma} \left[\sum_{t=1}^{T} I({\cal E}^c_t) \right]. \label{s:urestls:eqn:regretbound}
\end{align}
\end{lemma}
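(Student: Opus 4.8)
The plan is to prove the bound pointwise in $t$ on every sample path and then sum over $t$ and apply the expectation $E^{\boldsymbol{P}}_{\psi_0,\gamma}$ (with $\gamma = \alpha$, so that the events $E_t$, $F_t$ and the index condition refer to ALA's behaviour). Concretely, I would establish for each fixed $t$ the pointwise inequality
\begin{align*}
I(\psi_t \in J_l, U_t = u) \leq \ & I(E^C(t)) + I(|| \psi_t - \hat{\psi}_t ||_1 > \epsilon, E_t) \\
& + I(|| \psi_t - \hat{\psi}_t ||_1 \leq \epsilon, \hat{\psi}_t \notin J_l, \psi_t \in J_l, E_t) + I(E_t, F^C_t) \\
& + I(\hat{\psi}_t \in J_l, U_t = u, E_t, F_t, {\cal I}(\hat{\psi}_t,u) \geq {\cal L}^*(\hat{\psi}_t,\boldsymbol{P}) - 2\epsilon) \\
& + I(\hat{\psi}_t \in J_l, U_t = u, E_t, F_t, {\cal I}(\hat{\psi}_t,u) < {\cal L}^*(\hat{\psi}_t,\boldsymbol{P}) - 2\epsilon),
\end{align*}
whose six right-hand summands are exactly the $t$-th terms of $\sum_t I(E^C(t))$, $D_{2,1}$, $D_{2,2}$, $D_{1,3}$, $D_{1,1}$, and $D_{1,2}$, respectively.

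The pointwise inequality follows from an exhaustive case analysis organized as a decision tree. Suppose the left side equals $1$, so $\psi_t \in J_l$ and $U_t = u$. At time $t$ ALA either explores or exploits. If it explores, then $I(E^C(t)) = 1$ and the inequality holds. If it exploits, $E_t$ holds, and I branch on $|| \psi_t - \hat{\psi}_t ||_1$: if this exceeds $\epsilon$, the second indicator fires; otherwise the two beliefs are within $\epsilon$, and I branch on whether $\hat{\psi}_t \in J_l$. If $\hat{\psi}_t \notin J_l$, then combined with $\psi_t \in J_l$ and closeness the third indicator fires. If $\hat{\psi}_t \in J_l$, I branch on $F_t$: under $F^C_t$ the fourth indicator fires, and under $F_t$ exactly one of the last two indicators fires according to whether ${\cal I}(\hat{\psi}_t,u)$ meets the threshold ${\cal L}^*(\hat{\psi}_t,\boldsymbol{P}) - 2\epsilon$. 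In every leaf at least one right-hand term equals $1$, so the pointwise inequality holds unconditionally.

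Summing over $t = 0, \ldots, T-1$ and taking $E^{\boldsymbol{P}}_{\psi_0,\gamma}$ regroups the six families of indicators into $E^{\boldsymbol{P}}_{\psi_0,\gamma}[D_1(T,\epsilon,J_l,u)]$, $E^{\boldsymbol{P}}_{\psi_0,\gamma}[D_2(T,\epsilon, J_l)]$, and $E^{\boldsymbol{P}}_{\psi_0,\gamma}[\sum_{t=0}^{T-1} I(E^C(t))]$, giving (\ref{s:urestls:eqn:regretbound}). There is no genuine analytic obstacle here; the content is combinatorial bookkeeping. The one point requiring care is that $N_T(J_l,u)$ is phrased with the true belief $\psi_t \in J_l$ while the $D_{1,\cdot}$ terms are phrased with the estimated belief $\hat{\psi}_t \in J_l$, so the argument must not split $\{\psi_t \in J_l\}$ directly. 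The reconciliation is precisely that the two ``disagreement'' events -- the beliefs being far apart, or being close yet landing on opposite sides of the boundary of $J_l$ -- are absorbed into $D_{2,1}$ and $D_{2,2}$. I would therefore take care to verify that the chosen branching (explore/exploit, far/close, in/out of $J_l$, $F_t$/$F^C_t$, index threshold) is genuinely exhaustive and that each leaf is dominated by the matching summand, which is what makes the decomposition tight.
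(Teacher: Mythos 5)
Your proposal is correct and takes essentially the same route as the paper: the paper's proof performs exactly your exhaustive split (explore/exploit via $E_t$ versus $E^C_t$, then $\hat{\psi}_t \in J_l$ versus $\hat{\psi}_t \notin J_l$ reconciled through the far/close events of $D_{2,1}$ and $D_{2,2}$, then $F_t$ versus $F^C_t$ and the index threshold yielding $D_{1,1}$, $D_{1,2}$, $D_{1,3}$), merely written as a chain of summed indicator inequalities rather than a single pointwise decision tree, and concludes by taking expectations just as you do.
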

\begin{proof} 
\begin{align*}
&N_T(J_l,u) = \sum_{t=1}^{T} (I(\psi_t \in J_l, U_t =u, {\cal E}_t) 
+ I(\psi_t \in J_l, U_t =u, {\cal E}^c_t) )\\
&\leq \sum_{t=1}^{T} I(\psi_t \in J_l, \hat{\psi}_t \in J_l, U_t =u, {\cal E}_t) 
+ \sum_{t=1}^{T} I(\psi_t \in J_l, \hat{\psi}_t \notin J_l, U_t =u, {\cal E}_t) \\
&+  \sum_{t=1}^{T} I({\cal E}^c_t) \\
&\leq \sum_{t=1}^{T} I(\hat{\psi}_t \in J_l, U_t=u, {\cal E}_t) 
+ \sum_{t=1}^{T} I(\psi_t \in J_l, \hat{\psi}_t \notin J_l, {\cal E}_t) +  \sum_{t=1}^{T} I({\cal E}^c_t) \\
&\leq D_{1,1}(\epsilon,J_l,u)  + D_{1,2}(\epsilon)
+ D_{2,1}(\epsilon) + D_{2,2}(\epsilon, J_l) \\
&+  \sum_{t=1}^{T} I({\cal E}^c_t).
\end{align*}
The result follows from taking the expectation on both sides.
\end{proof}

 \subsection{Bounding the Expected Number of Explorations} \label{single:sec:strong-explorations}
 The following lemma bounds the number of explorations by time $T$.

\begin{lemma} \label{s:urestls:lemma:subsection1}
\begin{align}
E^{\boldsymbol{P}}_{\psi_0,\alpha} \left[ \sum_{t=1}^{T} I({\cal E}^c_t) \right] \leq \left(\sum_{k=1}^K |S^k| \right) L \log T (1+T_{\max}), \label{s:urestls:eqn:subsection1}
\end{align}
where $T_{\max} = \max_{k \in {\cal K}, i,j \in S^k} E[T^k_{ij}] +1$ and $T^k_{ij}$ is the hitting time of state $j$ of arm $k$ starting from state $i$ of arm $k$. Since all arms are ergodic $E[T^k_{ij}]$ is finite for all $k \in {\cal K}, i,j \in S^k$.
\end{lemma}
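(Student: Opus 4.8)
The plan is to bound the total number of exploration steps by charging each one to the particular counter $C^k_i$ that the player is currently trying to bring above the exploration threshold $f(t)=L\log t$, and then to bound separately the number of increments each counter needs and the expected number of steps spent per increment.

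First I would observe that ALA explores at time $t$ (event $E^C_t$) precisely when $W\neq\emptyset$, i.e.\ when some pair $(k,i)$ has $C^k_i(t)<f(t)=L\log t\le L\log T$ for every $t\le T$. Since $f$ is increasing, once a counter satisfies $C^k_i\ge L\log T$ it can never again trigger exploration before the horizon $T$. Because $C^k_i$ increases by one each time a transition out of state $i$ of arm $k$ is recorded, the number of such recordings needed to silence the pair $(k,i)$ is at most $\lceil L\log T\rceil$, which I absorb into the $L\log T$ factor. Increments that happen during exploitation (possible whenever the same arm is played at $t-1$ and $t$) can only reduce the count required during exploration, so it suffices to bound the exploration steps in the worst case in which every increment is obtained through exploration.

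Next I would decompose the exploration steps into episodes, each devoted to incrementing a single undersampled counter $(k,i)$: during such an episode the player plays arm $k$ continuously, waits until state $i$ is observed, and takes one further step to record a transition out of $i$ and increment $C^k_i$. The length of one episode is thus $1$ plus the number of plays needed to hit state $i$ from the current state of arm $k$. This hitting time is a stopping time for the ergodic chain $P^k$, so by the strong Markov property and Wald's identity its expectation is at most $\max_{i'\in S^k}E[T^k_{i'i}]\le T_{\max}-1$; bounding the hitting time crudely by $T_{\max}$ gives an expected episode length of at most $1+T_{\max}$. Finiteness of $E[T^k_{ij}]$ follows from ergodicity of each arm, as stated.

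Combining the two bounds, the expected number of exploration steps charged to a fixed counter $(k,i)$ is at most $L\log T\,(1+T_{\max})$, and summing over the $\sum_{k=1}^K|S^k|$ counters yields the claimed bound. The main obstacle I anticipate is making the charging argument fully rigorous: the number of episodes is itself random and path-dependent, and several pairs in $W$ may be serviced by plays of the same arm, so some care is needed (applying optional stopping / Wald's identity episode by episode, together with the observation that each play of arm $k$ increments exactly one counter $C^k_{\cdot}$) to ensure that waiting steps are not double counted and that the expected per-increment cost is controlled uniformly across episodes.
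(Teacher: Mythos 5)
Your proposal is correct and follows essentially the same argument as the paper's (two-sentence) proof: at most $L\log T$ exploration-driven updates per counter $C^k_i$, each costing an expected $1+T_{\max}$ steps (hitting time to state $i$ plus one transition-recording step), summed over the $\sum_{k=1}^K |S^k|$ counters. The episode-by-episode charging scheme and the appeal to the strong Markov property that you spell out are exactly the details the paper leaves implicit, so there is no substantive difference in approach.
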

\begin{proof} 
Assume that state $i$ of arm $k$ is {\em under-sampled}, i.e., $C^k_i(t) < L \log t$. Since arms are  ergodic, if the player keeps playing arm $k$, the expected number of time steps until a transition out of state $i$ is observed is at most $(1+T_{\max})$. If by time $T$, transitions out of state $i$ of arm $k$ is observed at least $L \log T$ times, for all states $i$ of all arms $k$, then the player will not explore at time $T$. Therefore there can be at most $\sum_{k=1}^K \sum_{i \in S^k} L \log T$ such transitions by time $T$ that take place in an exploration step. In the worst-case each of these transitions takes $(1+T_{\max})$ expected time steps. 
%
\end{proof}
 \subsection{Bounding $E^{\boldsymbol{P}}_{\psi_0,\alpha}[D_1(\epsilon,J_l,u)]$ for a suboptimal action $u \notin O(J_l; \boldsymbol{P})$} \label{single:sec:strong-para2}
 
\rdec{We begin with the following lemma, based on the Chernoff-Hoeffding bound, that shows that the probability that an estimated transition probability is significantly different from the true transition probability given AREP is in an exploitation phase is very small.} 

\rdec{
\begin{lemma} \label{s:urestls:lemma:largedev}
For any $\epsilon' > 0$, for a player using AREP with constant $L \geq 1/ (\epsilon')^2$, we have
\begin{align*}
P\left(|\hat{p}^k_{ij,t} - p^k_{ij}| > \epsilon', {\cal E}_t \right) 
:= P\left( \{|\hat{p}^k_{ij,t} - p^k_{ij}| > \epsilon' \} \cap {\cal E}_t \right) 
\leq \frac{ 2}{t^2},
\end{align*}
for all $t>0$, $i, j \in S^k$, $k \in {\cal K}$.
\end{lemma}
\begin{proof}
See Appendix \ref{s:urestls:app:lemmalargedev}.
\end{proof}
} 
 
We next bound $E^{\boldsymbol{P}}_{\psi_0,\alpha}[ D_{1,1}(\epsilon,J_l,u)]$ for any suboptimal $u$. Let 
\begin{align*}
\underline{\Delta}(J_l;\boldsymbol{P}) &:= \min_{\psi \in J_l, u \notin O(J_l; \boldsymbol{P})} \Delta (\psi, u; \boldsymbol{P}).
\end{align*}
By Corollary \ref{s:urestls:corr:esuboptimality2}, $\underline{\Delta}(J_l;\boldsymbol{P})>0$ for all $l=1,\ldots, A(\tau_{\textrm{tr}})$. Let 
\begin{align}
\underline{\Delta} := \min_{l=1,\ldots, A(\tau_{\textrm{tr}})} \underline{\Delta}(J_l;\boldsymbol{P}). \label{eqn:underlinedelta}
\end{align}
In the following lemma we show that when the transition probability estimates are sufficiently accurate and the estimated solution to the AROE is sufficiently close to the true solution, a suboptimal action cannot be chosen by the player.
\begin{lemma}  \label{s:urestls:lemma:subsec30}
Let $\delta_e>0$ (depending on $\tau_{\textrm{tr}}$) be the greatest real number such that 
\begin{align*}
|| \hat{\boldsymbol{P}}_t - \boldsymbol{P}||_1 < \delta_e &\Rightarrow 
\left| {\cal L}(\psi, u, h_{\boldsymbol{P}}, \boldsymbol{P} ) - {\cal L}(\psi, u, h_{\boldsymbol{P}}, \hat{\boldsymbol{P}}_t ) \right| \leq \underline{\Delta}/4, 
\end{align*}
for all $\psi \in \boldsymbol{\Psi}$. Such $\delta_e$ exists because $T_{\boldsymbol{P}}(\psi,y,u)$ is continuous in $\boldsymbol{P}$, and $h_{\boldsymbol{P}}(\psi)$ is continuous in $\psi$.
Then for a player using AREP with \aug{$L \geq K^2 S^4_{\max}/ \delta_e^2$}, for any suboptimal action $u \notin O(J_l; \boldsymbol{P})$, we have
\begin{align*}
E^{\boldsymbol{P}}_{\psi_0,\alpha}[ D_{1,1}(\epsilon,J_l,u)] \leq \aug{2 K S^2_{\max} \beta},
\end{align*}
for $\epsilon < \underline{\Delta}/4$, where $\beta= \sum_{t=1}^{\infty} 1/t^{2}$. 
\end{lemma}
\begin{proof}
See Appendix \ref{s:urestls:app:lemma:subsec30}.
\end{proof}

Next we bound $E^{\boldsymbol{P}}_{\psi_0,\alpha} [D_{1,2}(\epsilon)]$. To do this we introduce the following lemma which implies that when the estimated transition probabilities get close to the true values, the difference between the solutions to the AROE based on the estimated and true values diminishes. 
\begin{lemma} \label{s:urestls:lemma:subsec34}
For any $\epsilon>0$, there exists $\varsigma(\epsilon) >0$ depending on $\epsilon$ such that if $\left\|P^k - \hat{P}^k\right\|_1 < \varsigma(\epsilon), \forall k \in {\cal K}$ then $\left\|h_{\boldsymbol{P}} - h_{\hat{\boldsymbol{P}}} \right\|_\infty < \epsilon$.
\end{lemma}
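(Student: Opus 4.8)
The plan is to reduce the continuity of the optimal bias $h_{\boldsymbol{P}}$ to two ingredients: (i) continuity of the bias $h_{\boldsymbol{P},\gamma}$ of a \emph{fixed} stationary policy $\gamma$ as a function of $\boldsymbol{P}$, and (ii) the fact that, for $\hat{\boldsymbol{P}}$ sufficiently close to $\boldsymbol{P}$, an optimal policy for $\hat{\boldsymbol{P}}$ is also optimal for $\boldsymbol{P}$, so that both $h_{\boldsymbol{P}}$ and $h_{\hat{\boldsymbol{P}}}$ are fixed-policy biases of one common policy $\gamma_0$. Granting (i) and (ii), choose $\varsigma$ small enough that (ii) holds and that the fixed-policy bound of (i) is below $\epsilon$; then $\|h_{\boldsymbol{P}}-h_{\hat{\boldsymbol{P}}}\|_\infty=\|h_{\boldsymbol{P},\gamma_0}-h_{\hat{\boldsymbol{P}},\gamma_0}\|_\infty<\epsilon$. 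Throughout I fix a common normalization of the AROE solutions for $\boldsymbol{P}$ and $\hat{\boldsymbol{P}}$ (for instance centering each bias against the stationary distribution of its induced chain, which is the normalization delivered by the series representation below), since otherwise a differing additive constant would make the sup-norm comparison meaningless.

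For (i), fix $\gamma$ and let ${\cal P}_{\boldsymbol{P},\gamma}$ be the Markov kernel it induces on the information states ${\cal C}$, with stationary distribution $\pi^*_{\boldsymbol{P},\gamma}$, reward vector $\tilde r_{\boldsymbol{P},\gamma}$ (whose entries $\bar r(\psi_{\boldsymbol{P}}(i),\gamma(i))$ depend on $\boldsymbol{P}$ through the belief map $\psi_{\boldsymbol{P}}$), and gain $g_{\boldsymbol{P},\gamma}=\pi^*_{\boldsymbol{P},\gamma}\tilde r_{\boldsymbol{P},\gamma}^T$. Write the bias as the Poisson series
\begin{align*}
h_{\boldsymbol{P},\gamma}(i)=\sum_{t=0}^{\infty}\left(e_i {\cal P}^{t}_{\boldsymbol{P},\gamma}-\pi^*_{\boldsymbol{P},\gamma}\right)\tilde r_{\boldsymbol{P},\gamma}^T .
\end{align*}
I would split the sum at an index $N$. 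The tail $\sum_{t\geq N}$ is bounded by $r_{\max}\sum_{t\geq N} C\rho^{t}$ using uniform ergodicity of the induced chain, and the same bound holds for $\hat{\boldsymbol{P}}$, so choosing $N$ large makes both tails below $\epsilon/3$. For the finite part, the difference between the $\boldsymbol{P}$ and $\hat{\boldsymbol{P}}$ versions is controlled termwise: $\|e_i{\cal P}^{t}_{\boldsymbol{P},\gamma}-e_i{\cal P}^{t}_{\hat{\boldsymbol{P}},\gamma}\|_1$ is bounded by the perturbation estimate of Lemma~\ref{intro:lemma:ergodic}, the reward vectors satisfy $\|\tilde r_{\boldsymbol{P},\gamma}-\tilde r_{\hat{\boldsymbol{P}},\gamma}\|_\infty\to 0$ by continuity of $\psi_{\boldsymbol{P}}$ in $\boldsymbol{P}$, and $|g_{\boldsymbol{P},\gamma}-g_{\hat{\boldsymbol{P}},\gamma}|$ is small because $\pi^*_{\boldsymbol{P},\gamma}$ is Lipschitz in $\boldsymbol{P}$ (again Lemma~\ref{intro:lemma:ergodic}). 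Hence for $\varsigma$ small the finite part and the gain term each contribute less than $\epsilon/3$, giving $\|h_{\boldsymbol{P},\gamma}-h_{\hat{\boldsymbol{P}},\gamma}\|_\infty<\epsilon$.

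For (ii), set $\delta_d=\min\{g_{\boldsymbol{P}}-g_{\boldsymbol{P},\gamma}:\gamma\text{ not optimal for }\boldsymbol{P}\}$. This minimum is over the finitely many behaviorally distinct candidate policies: by the finite partition ${\cal G}_{\tau^0}$ together with Assumption~\ref{s:urestls:assump:esuboptimality2}, beyond the transient coordinates the decision is fixed on finitely many regions, so only finitely many gains are achievable and $\delta_d>0$. Shrinking $\varsigma$ further so that gains move by less than $\delta_d/2$ under the perturbation (as in the gain estimate above), any policy optimal for $\hat{\boldsymbol{P}}$ is $\delta_d$-optimal for $\boldsymbol{P}$, hence optimal for $\boldsymbol{P}$; this produces the common $\gamma_0$ and closes the argument.

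The main obstacle I anticipate is the uniform control demanded in step (i): I need geometric ergodicity of the induced information-state chain with constants $C,\rho$ that are uniform over the entire $\varsigma$-ball of transition matrices \emph{and} over all initial beliefs $\psi\in\boldsymbol{\Psi}$, because the lemma asks for a supremum over the whole simplex rather than over ${\cal C}$. Assumption~\ref{s:urestls:assump:1} is exactly what makes this feasible: it forces every entry of the one-step state-transition kernel to be bounded below by some $\eta>0$, a minorization condition that persists on a small neighborhood of $\boldsymbol{P}$ and drives the belief filter to forget its initialization at a common geometric rate. The secondary delicate point is the attainment and strict positivity of the gain gap $\delta_d$ over an a priori infinite policy class, which is precisely where the finiteness of ${\cal G}_{\tau^0}$ and the single-optimal-action structure of Assumption~\ref{s:urestls:assump:esuboptimality2} must be used.
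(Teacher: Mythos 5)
Your proposal follows a genuinely different route from the paper's final proof, and it contains two genuine gaps, both concentrated exactly where you flag "obstacles" but then assert rather than prove resolution. First, step (i)'s Poisson-series representation of $h_{\boldsymbol{P},\gamma}$ requires the policy-induced chain ${\cal P}_{\boldsymbol{P},\gamma}$ on the countable information-state space ${\cal C}$ to be positive recurrent with a stationary distribution $\pi^*_{\boldsymbol{P},\gamma}$ and uniform-ergodicity constants valid across the $\varsigma$-ball — and this does \emph{not} follow from Assumption \ref{s:urestls:assump:1}. In an uncontrolled problem the action only gathers information, so an optimal stationary policy may simply never sample a dominated arm; then $\tau^k$ grows without bound along every trajectory, the chain on ${\cal C}$ has no stationary distribution, and neither your series nor the perturbation bound of Lemma \ref{intro:lemma:ergodic} (which presupposes uniform ergodicity of the \emph{unperturbed} kernel) applies. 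Your minorization argument addresses a different, easier issue: Assumption \ref{s:urestls:assump:1} does make the belief of an unobserved arm, $e^k_{s^k}(P^k)^{\tau^k}$, forget its initialization geometrically, but that says nothing about recurrence of the induced chain over $(\boldsymbol{s},\boldsymbol{\tau})$. Second, in step (ii) the positivity of the gain gap $\delta_d$ rests on there being "finitely many behaviorally distinct candidate policies," which you justify by Assumption \ref{s:urestls:assump:esuboptimality2}; but that assumption constrains optimality regions for the \emph{true} $\boldsymbol{P}$ only, and nothing shows that the optimal policies of every nearby $\hat{\boldsymbol{P}}$ are measurable with respect to the fixed partition ${\cal G}_{\tau^0}$ built from $\boldsymbol{P}$. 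Without that, the candidate class is infinite and the infimum defining $\delta_d$ could be zero. (A third, smaller debt: identifying the AROE solution $h_{\boldsymbol{P}}$ with the fixed-policy bias $h_{\boldsymbol{P},\gamma_0}$ on all of $\boldsymbol{\Psi}$ needs a verification argument for this belief-MDP, not just the normalization you fix.)

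The paper's actual proof sidesteps every policy-dependent ergodicity question. It rewrites the AROE for $\hat{\boldsymbol{P}}$ as an AROE with the \emph{true} kernels $V_{\boldsymbol{P}}, T_{\boldsymbol{P}}$ and perturbed rewards $\bar{r}+q$ where $q(\boldsymbol{P},\hat{\boldsymbol{P}},\psi,u)\to 0$ uniformly as $\hat{\boldsymbol{P}}\to\boldsymbol{P}$, runs value iteration from $v_0=0$ for both reward functions, uses the non-expansiveness estimate (Equation 2.27 of the Platzman reference) to bound $\sup_{\psi}|v_{l,\hat{\boldsymbol{P}}}(\psi)-v_{l,\boldsymbol{P}}(\psi)|$ by $2q_{\max}$ uniformly in $l$, and concludes via the uniform convergence guaranteed by S-1 of Lemma \ref{s:urestls:lemma:ACOEexistence} and a three-term estimate; only Assumption \ref{s:urestls:assump:1} is used, and the supremum over the whole simplex comes for free. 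It is worth knowing that your route is precisely the one sitting in a commented-out block of the paper's source (fixed-policy biases $h_{P,\gamma}$, a restricted class $\Gamma'(P)$ of policies inducing uniformly ergodic kernels, and a gain gap $\delta_d$): there the authors had to posit, as an explicit extra assumption, that optimal policies of all nearby transition matrices lie in $\Gamma'(\hat{P})\cap\Gamma'(P)$ — exactly the two points you left unproven — and the dangling assumption citation in Theorem \ref{s:urestls:theorem:main} is the residue of that abandoned draft. So your approach can be made rigorous, but only at the price of an additional structural assumption on the induced dynamics; the perturbed-reward argument buys the lemma from Assumption \ref{s:urestls:assump:1} alone.
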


\begin{proof}
See Appendix \ref{s:urestls:app:subsec34}.
\end{proof}
The following lemma bounds $E^{\boldsymbol{P}}_{\psi_0,\alpha} [D_{1,2}(\epsilon)]$. 
\begin{lemma} \label{s:urestls:lemma:subsec35}
For any $\epsilon>0$, let $\varsigma(\epsilon) > 0$ be such that Lemma \ref{s:urestls:lemma:subsec34} holds. Then for a player using AREP with \aug{$L \geq S^4_{\max}/ (\varsigma(\epsilon))^2$}, we have
\begin{align}
E^{\boldsymbol{P}}_{\psi_0,\alpha} [D_{1,2}(\epsilon)] \leq 2K S^2_{\max}  \beta . 
\end{align}
\end{lemma}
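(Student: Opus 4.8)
The plan is to reduce the event $F^C_t$ to a deviation event on the transition probability estimates and then apply the large deviation bound of Lemma \ref{s:urestls:lemma:largedev}, in exactly the same manner as the proofs of Lemmas \ref{s:urestls:lemma:subsec21} and \ref{s:urestls:lemma:subsec32}. The key observation is that $\hat{h}_t$ is, by construction, the solution $h_{\hat{\boldsymbol{P}}_t}$ of the AROE computed with the estimated transition matrices $\hat{\boldsymbol{P}}_t$, so Lemma \ref{s:urestls:lemma:subsec34} is directly applicable to control $\|\hat{h}_t - h_{\boldsymbol{P}}\|_\infty$ in terms of $\|\hat{\boldsymbol{P}}_t - \boldsymbol{P}\|_1$.

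First I would invoke Lemma \ref{s:urestls:lemma:subsec34} with the given $\epsilon$ to obtain the $\varsigma>0$ for which $\|P^k - \hat{P}^k_t\|_1 < \varsigma$ for all $k \in {\cal K}$ forces $\|h_{\boldsymbol{P}} - \hat{h}_t\|_\infty < \epsilon$. Taking the contrapositive, the event $F^C_t = \{\|\hat{h}_t - h_{\boldsymbol{P}}\|_\infty > \epsilon\}$ is contained in $\{\|P^k - \hat{P}^k_t\|_1 \geq \varsigma \textrm{ for some } k \in {\cal K}\}$. Hence $I(E_t, F^C_t) \leq I(E_t, \|P^k - \hat{P}^k_t\|_1 \geq \varsigma \textrm{ for some } k)$, and summing over $t$ and taking expectations gives
\begin{align*}
E^{\boldsymbol{P}}_{\psi_0,\alpha}[D_{1,3}(T,\epsilon)] \leq \sum_{t=0}^{T-1} P\left(\|P^k - \hat{P}^k_t\|_1 \geq \varsigma \textrm{ for some } k \in {\cal K},\, E_t\right).
\end{align*}

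Next I would convert the matrix-norm deviation into per-entry deviations. Since $\|P^k - \hat{P}^k_t\|_1$ is the maximum absolute row sum, the inequality $\|P^k - \hat{P}^k_t\|_1 \geq \varsigma$ implies that some row has $l_1$ mass at least $\varsigma$, and therefore some entry satisfies $|\hat{p}^k_{ij,t} - p^k_{ij}| \geq \varsigma/S_{\max} \geq \varsigma/S^2_{\max}$. A union bound over $k \in {\cal K}$ and over $(i,j) \in S^k \times S^k$ then gives
\begin{align*}
E^{\boldsymbol{P}}_{\psi_0,\alpha}[D_{1,3}(T,\epsilon)] \leq \sum_{t=0}^{T-1} \sum_{k=1}^K \sum_{(i,j) \in S^k \times S^k} P\left(|\hat{p}^k_{ij,t} - p^k_{ij}| \geq \frac{\varsigma}{S^2_{\max}},\, E_t\right).
\end{align*}
For $L \geq C_{\boldsymbol{P}}(\varsigma/S^2_{\max})$, which is exactly the stated hypothesis, each summand is at most $2/t^2$ by Lemma \ref{s:urestls:lemma:largedev}. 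Since the number of index pairs is $\sum_{k=1}^K |S^k|^2 \leq K S^2_{\max}$, summing over $t$ and using $\sum_{t \geq 1} 1/t^2 = \beta$ yields the claimed bound $2 K S^2_{\max} \beta$.

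I expect no serious obstacle in this argument; it is a routine deviation bound once Lemma \ref{s:urestls:lemma:subsec34} is in hand, and the bulk of the conceptual work (the perturbation continuity of $h$ in $\boldsymbol{P}$) has already been discharged there. The only points requiring mild care are the correct application of the contrapositive of Lemma \ref{s:urestls:lemma:subsec34} to establish the event inclusion, the bookkeeping that produces the constant $\varsigma/S^2_{\max}$ so that it matches the hypothesis on $L$, and the minor technicality of the $t=0$ term in the sum (handled, as in the companion lemmas, by absorbing it into the initialization or starting the count at $t=1$).
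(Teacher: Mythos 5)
Your proposal is correct and follows essentially the same route as the paper's own proof: the contrapositive of Lemma \ref{s:urestls:lemma:subsec34} gives the event inclusion $\{E_t, F^C_t\} \subset \{\|P^k - \hat{P}^k_t\|_1 \geq \varsigma \textrm{ for some } k, E_t\}$, followed by a per-entry union bound at threshold $\varsigma/S^2_{\max}$ and an application of Lemma \ref{s:urestls:lemma:largedev} to obtain the $2KS^2_{\max}\beta$ bound. Your added remarks (identifying $\hat{h}_t$ with $h_{\hat{\boldsymbol{P}}_t}$, and justifying the row-sum-to-entry reduction) only make explicit steps the paper leaves implicit.
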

\begin{proof} 
See Appendix \ref{s:urestls:app:lemma:subsec35}.
\end{proof}

 \subsection{Bounding $E^{\boldsymbol{P}}_{\psi_0,\alpha}[D_2(\epsilon, J_l)]$} \label{single:sec:strong-para1}
 \begin{lemma} \label{s:urestls:lemma:subsec21}
For a player using AREP with exploration constant \aug{$L \geq (K S^2_{\max} |S^1| \ldots |S^K| C_1(\boldsymbol{P}))^2/ \epsilon^2$}, we have
\begin{align}
E^{\boldsymbol{P}}_{\psi_0,\alpha}[D_{2,1}(\epsilon)] \leq 2K S^2_{\max} \beta, \notag
\end{align}
where $C_1(\boldsymbol{P}) = \max_{k \in {\cal K}} C_1(P^k,\infty)$ and $C_1(P^k, t)$ is a constant that can be found in Lemma \ref{intro:lemma:ergodic} in Appendix \ref{s:urestls:app:lemmalargedev1}. 
\end{lemma}
\begin{proof} 
See Appendix \ref{s:urestls:app:lemma:subsec21}.
\end{proof}

Next we will bound $E^{\boldsymbol{P}}_{\psi_0,\alpha}[D_{2,2}(\epsilon, J_l)]$. 

\begin{lemma} \label{s:urestls:lemma:subsec22}
Let $\tau_{\textrm{tr}}$ be such that Assumption \ref{s:urestls:assump:esuboptimality2} holds. Then for $\epsilon < \bar{\epsilon}/2$, where $\bar{\epsilon}$ is given in Corollary \ref{s:urestls:corr:esuboptimality2}, $E^{\boldsymbol{P}}_{\psi_0,\alpha}[D_{2,2}(\epsilon,J_l)]=0, l=1,\ldots, A(\tau_{\textrm{tr}})$.
\end{lemma}
\begin{proof} 
By Corollary \ref{s:urestls:corr:esuboptimality2}, any $\psi_t \in J_l$ is at least $\bar{\epsilon }$ away from the boundary of $J_l$. Thus given $\hat{\psi}_t$ is at most $\epsilon$ away from $\psi_t$, it is at least $\bar{\epsilon}/2$ away from the boundary of $J_l$.
\end{proof} 
 \subsection{Logarithmic regret upper bound} \label{single:sec:strong-para3}
 \begin{theorem} \label{s:urestls:theorem:main}
\aug{
Assume that \revq{Assumptions \ref{s:urestls:assump:1} and \ref{s:urestls:assump:esuboptimality2} are true}. Let $\tau_{\textrm{tr}}$ be the minimum threshold, and $\bar{\epsilon}$ be the number given in Corollary \ref{s:urestls:corr:esuboptimality2}. 
Let
\begin{align}
\epsilon = \min \left\{ \frac{\underline{\Delta}}{8}, \frac{\bar{\epsilon}}{4}  \right\}  ,    \notag
\end{align}
where $\underline{\Delta}$ is given in (\ref{eqn:underlinedelta}). Let
\begin{align}
C(\boldsymbol{P}) := \max \left\{ \frac{K^2 S^4_{\max}}{\delta^2_e} , \frac{S^4_{\max}}{\varsigma(\epsilon)^2}, 
\frac{ ( K S^2_{\max} |S^1| \ldots |S^K| C_1(\boldsymbol{P})  )^2     }{\epsilon^2}   \right\}   ,       \notag
\end{align}
where $\delta^2_e >0$ is the constant given in Lemma \ref{s:urestls:lemma:subsec30}, $\varsigma(\epsilon)$ is the constant given in Lemma \ref{s:urestls:lemma:subsec35} and $C_1(\boldsymbol{P})$ is the constant given in Lemma \ref{s:urestls:lemma:subsec21}.
For a player using AREP with $L \geq C(\boldsymbol{P})$, 
for any arm (action) $u \in U$ which is suboptimal for the belief vectors in $J_l$, we have
\begin{align}
E^{\boldsymbol{P}}_{\psi_0, \alpha} [N_T(J_l,u)] \leq  \left(\sum_{k=1}^K |S^k| \right) L \log T (1+T_{\max}) + 6 K S^2_{\max} \beta ~. \notag
\end{align}
}
Therefore,
\begin{align}
R^{\alpha}_{\psi_0; \boldsymbol{P}}(T) &\leq \left(\left(\sum_{k=1}^K |S^k| \right)L \log T (1+T_{\max}) + 6 K S^2_{\max} \beta\right) \notag \\
&\times \sum_{l=1}^{A(\tau_{\textrm{tr}})} \sum_{u \notin O(J_l;\boldsymbol{P})}  \bar{\Delta}(J_l, u; \boldsymbol{P}) + \bar{C}_{\boldsymbol{P}}. \notag
\end{align}
When the arm rewards are in $[0,1]$, strong regret at time $T$ given as $R^{\alpha}_{\psi_0; \boldsymbol{P}}(T)$ can also be upper bounded by
\begin{align*}
\left(\left(\sum_{k=1}^K |S^k| \right)L \log T (1+T_{\max}) +6 K S^2_{\max}  \beta\right) (K A(\tau_{\textrm{tr}})) + \bar{C}_{\boldsymbol{P}} ~.
\end{align*}
\end{theorem}
\begin{proof} 
\aug{
In order for the bound in Lemma \ref{s:urestls:lemma:subsec30} to hold it is sufficient that $\epsilon < \underline{\Delta}/4$. In order for the bound in Lemma \ref{s:urestls:lemma:subsec22} to hold it is sufficient that
$\epsilon < \bar{\epsilon}/2$. These two conditions are satisfied when $\epsilon = \min \left\{ \frac{\underline{\Delta}}{8}, \frac{\bar{\epsilon}}{4}  \right\}$. 
In order for the bound in Lemma \ref{s:urestls:lemma:subsec30} to hold it is sufficient that $L \geq K^2 S^4_{\max}/ \delta_e^2$. Similarly the sufficient condition for Lemma \ref{s:urestls:lemma:subsec35} is $L \geq S^4_{\max}/ (\varsigma(\epsilon))^2$, and Lemma \ref{s:urestls:lemma:subsec21} is $L \geq (K S^2_{\max} |S^1| \ldots |S^K| C_1(\boldsymbol{P}))^2/ \epsilon^2$.
}

The regret bound follows from combining the results of Lemmas \ref{s:urestls:eqn:regret}, \ref{s:urestls:lemma:subsection1}, \ref{s:urestls:lemma:subsec30}, \ref{s:urestls:lemma:subsec35}, \ref{s:urestls:lemma:subsec21} and \ref{s:urestls:lemma:subsec22}. 
\end{proof} 

\begin{remark}
Our regret bound depends on $A(\tau_{\textrm{tr}})$. However, the player does not need to know the value of $\tau_{\textrm{tr}}$ for which Corollary \ref{s:urestls:corr:esuboptimality2} is true. It only needs to choose $L$ large enough so that the number of exploration steps is sufficient to ensure a bounded number of errors in exploitation steps. 
\aug{
In the next section we will propose an extension to AREP such that the player can achieve near-logarithmic in time regret without knowing the sufficient condition on $L$.
}
\end{remark}

\subsection{A comment on the worst-case regret bound of AREP}
\aug{
Theorem 1 gives a logarithmic in time regret bound for AREP. This bound depends on the true set of transition probability matrices $\boldsymbol{P}$ since the sufficient condition on $L$, $A(\tau_{\textrm{tr}})$ and $\bar{C}_{\boldsymbol{P}}$ given in Theorem 1 depend on $\boldsymbol{P}$. This type of regret bounds are called instance (distribution) dependent regret bounds. Bounds on regret that hold independent of $\boldsymbol{P}$ are called worst-case (distribution-free) bounds. There exists algorithms for URBP with $\tilde{O}(\sqrt{T})$ distribution-free regret bounds \cite{ortner2012regret}. However, it is an open question if an algorithm can achieve both $O(\log T)$ instance-dependent and  $\tilde{O}(\sqrt{T})$ distribution-free regret bound for URBP. 

Indeed, it is proven to be very difficult to find a general distribution-free regret bound for AREP. One of the reasons is that
the numbers $\epsilon$, $\varsigma(\epsilon)$, $C_1(\boldsymbol{P})$ and $\delta^2_e$ in Theorem 1 depend on the true set of transition probabilities $\boldsymbol{P}$ and rewards, but they don't have closed form expressions as functions of transition probabilities and rewards. This is due to the fact that existence of these constants are proven using the continuity property of the solutions to the average reward optimality equation in URBP. However, there is no analytical expression for the exact form of the solution in the theory of finite probabilistic systems \cite{platzman1980}. 
}
 \section{AREP with an Adaptive Exploration Function} \label{sec:extensions_explore}
 
In this section, we consider an adaptive exploration function for AREP, by which the player can achieve near-logarithmic regret without knowing how large it should chose the exploration constant $L$, which depends on $\boldsymbol{P}$. 
%
%
First note that the analysis in Section \ref{single:sec:strong-analysis} holds when AREP is run with a sufficiently large exploration constant $L\geq C(\boldsymbol{P})$ such that in each exploitation step the estimated transition probabilities $\hat{\boldsymbol{P}}$ is close enough to $\boldsymbol{P}$ to guarantee that all regret terms in (\ref{s:urestls:eqn:regretbound}) is finite except that due to explorations. 
%
Practically, since the player does not know the transition probabilities initially, it may be unreasonable to assume that it can check if $L \geq C(\boldsymbol{P})$.  One possible solution is to assume that the player knows \newt{a compact set} $\tilde{\boldsymbol{\Xi}} \subset \boldsymbol{\Xi}$, the set of transition probability matrices where $\boldsymbol{P}$ lies in. 
If this is the case, then it can compute $\tilde{C} = \newt{\max}_{\tilde{\boldsymbol{P}} \in \tilde{\boldsymbol{\Xi}}} C(\tilde{\boldsymbol{P}})$, and choose $L > \tilde{C}$. 

In this section, we present another exploration function for AREP such that the player can achieve near-logarithmic regret even without knowing $C(\boldsymbol{P})$ or $\tilde{C}$. Let $f(t) = L(t) \log t$ where $L(t)$ is an increasing function such that $L(1)=1$ and $\lim_{t \rightarrow \infty} L(t) = \infty$. The intuition behind this exploration function is that after some time $T_0$, $L(t)$ will be large enough so that the estimated transition probabilities are sufficiently accurate, and the regret due to incorrect calculations becomes a constant independent of time. 

\begin{theorem}
When $\boldsymbol{P}$ is such that Assumptions \ref{s:urestls:assump:1} and \ref{s:urestls:assump:esuboptimality2} hold, if the player uses AREP with $f(t) = L(t) \log t$, for some increasing $L(t)$ such that $L(1)=1$ and $\lim_{t \rightarrow \infty} L(t) = \infty$, then there exists $\tau_{\textrm{tr}}(\boldsymbol{P}) > 0$, $T_0(L, \boldsymbol{P})>0$ such that the strong regret is upper bounded by
\begin{align*}
R^{\alpha}_{\psi_0; \boldsymbol{P}}(T) & \leq r_{\max} \left( T_0(L, \boldsymbol{P}) + \left(\sum_{k=1}^K |S^k| \right) L(T) \log T (1+T_{\max}) \right. \\
&\left. +6 K S^2_{\max}  \beta 
\left( \sum_{l=1}^{A(\tau_{\textrm{tr}})} \sum_{u \notin O(J_l;\boldsymbol{P})}  \bar{\Delta}(J_l, u; \boldsymbol{P})\right) \right)  \\ 
&\leq r_{\max} \left( T_0(L, \boldsymbol{P}) + \left(\sum_{k=1}^K |S^k| \right) L(T) \log T (1+T_{\max}) \right. \\
& \left. + 6 K S^2_{\max} \beta 
 (\tau_{\textrm{tr}})^M  \left(\sum_{k=1}^K |S^k| \right) \max_{l \in \{1,\ldots, A(\tau_{\textrm{tr}}) \}}  \bar{\Delta}(J_l, u; \boldsymbol{P})
 \right) + \bar{C}_{\boldsymbol{P}} ~. 
\end{align*}
\end{theorem}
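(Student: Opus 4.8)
The plan is to exploit the monotonicity of $L(t)$ to reduce the adaptive-exploration algorithm to the fixed-$L$ analysis of Section \ref{single:sec:strong-analysis} after a finite burn-in period. The key observation is that every large-deviation estimate used in the proof of Theorem \ref{s:urestls:theorem:main} (Lemmas \ref{s:urestls:lemma:largedev1} and \ref{s:urestls:lemma:largedev}) only requires the exploration constant to exceed a fixed threshold $C_{\boldsymbol{P}}(\epsilon)$; since $L(t)\to\infty$, this threshold is eventually met, and from that point on the fixed-$L$ bounds apply verbatim.

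First I would define the burn-in time. Collecting the finitely many constants $C_{\boldsymbol{P}}(\cdot)$ that appear as sufficient lower bounds on $L$ in Lemmas \ref{s:urestls:lemma:subsec21}, \ref{s:urestls:lemma:subsec31}, \ref{s:urestls:lemma:subsec32} and \ref{s:urestls:lemma:subsec35}, let $L_0(\boldsymbol{P})$ be their maximum, and let $T_0(L,\boldsymbol{P})$ be the smallest integer such that $L(t)\geq L_0(\boldsymbol{P})$ for all $t\geq T_0(L,\boldsymbol{P})$; this is finite because $L(t)$ is increasing with $\lim_{t\to\infty}L(t)=\infty$. The regret accumulated over the first $T_0(L,\boldsymbol{P})$ steps is at most $r_{\max}T_0(L,\boldsymbol{P})$, since each per-step suboptimality gap is bounded by $r_{\max}$.

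Next I would re-derive the large-deviation bounds for $t\geq T_0(L,\boldsymbol{P})$. Whenever ALA exploits at such $t$ we have $C^k_i(t)\geq f(t)=L(t)\log t\geq L_0(\boldsymbol{P})\log t$, so the Chernoff-Hoeffding argument of Lemmas \ref{s:urestls:lemma:largedev1} and \ref{s:urestls:lemma:largedev} yields $P(|\hat{p}^k_{ij,t}-p^k_{ij}|>\epsilon, E_t)\leq 2/t^2$ for $t\geq T_0(L,\boldsymbol{P})$. Substituting these into the proofs of Lemmas \ref{s:urestls:lemma:subsec21}, \ref{s:urestls:lemma:subsec22}, \ref{s:urestls:lemma:subsec31}, \ref{s:urestls:lemma:subsec32} and \ref{s:urestls:lemma:subsec35}, the corresponding $D$-terms contribute tail sums $\sum_{t\geq T_0} 2KS^2_{\max}/t^2\leq 2KS^2_{\max}\beta$, exactly as in the fixed-$L$ case, giving $E^{\boldsymbol{P}}_{\psi_0,\alpha}[D_1+D_2]\leq (8KS^2_{\max}+4/\delta(L))\beta$ with $\delta(L)$ inherited from Lemma \ref{s:urestls:lemma:subsec31}. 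The only term that grows with $T$ is the exploration count of Lemma \ref{s:urestls:lemma:subsection1}: since $L(t)$ is increasing, $f(t)\leq f(T)=L(T)\log T$ for all $t\leq T$, so at most $(\sum_k|S^k|)L(T)\log T$ updates are forced, and multiplying by the expected $(1+T_{\max})$ steps per update gives $E^{\boldsymbol{P}}_{\psi_0,\alpha}[\sum_{t=0}^{T-1}I(E^C_t)]\leq (\sum_k|S^k|)L(T)\log T(1+T_{\max})$.

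Finally I would assemble the pieces. Combining the burn-in bound with Lemma \ref{s:urestls:lemma:regretbound} and (\ref{s:urestls:eqn:regret}) gives the first displayed inequality, the factor $r_{\max}$ arising from bounding every suboptimal play (and every burn-in step) by $r_{\max}$. The second inequality then follows from a purely combinatorial count of $A(\tau^0)$ in Definition \ref{defn:partition}: a partition vector fixes, for each arm, either the ``stationary'' label $\tau^0$ or one of at most $|S^k|(\tau^0-1)$ recent state/age pairs, so after the stated crude bounding $A(\tau^0)\leq(\tau^0)^M(\sum_k|S^k|)$, whence $\sum_l\sum_{u\notin O(J_l;\boldsymbol{P})}\bar{\Delta}(J_l,u;\boldsymbol{P})\leq(\tau^0)^M(\sum_k|S^k|)\max_l\bar{\Delta}(J_l,u;\boldsymbol{P})$. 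I expect the main obstacle to be the clean handling of the burn-in/steady-state split: one must check that none of the constant ($\beta$-order) bounds are inflated by restarting the $1/t^2$ sums at $T_0$ rather than at $1$ (they are not, since $\sum_{t\geq T_0}1/t^2\leq\beta$), and that a single finite $L_0(\boldsymbol{P})$ can be taken to simultaneously validate all of the component lemmas.
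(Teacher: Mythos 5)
Your proposal is correct and takes essentially the same route as the paper's own (much terser) proof: split time at a burn-in instant $T_0(L,\boldsymbol{P})$ after which $L(t)\geq L_0(\boldsymbol{P})$, charge the burn-in at $r_{\max}$ per step, reuse the fixed-$L$ lemmas of Section \ref{single:sec:strong-analysis} verbatim for the constant $\beta$-order terms, and bound the exploration regret by $r_{\max}\left(\sum_{k=1}^K |S^k|\right) L(T)\log T\,(1+T_{\max})$ using monotonicity of $f(t)=L(t)\log t$. You in fact supply details the paper's three-sentence proof omits (the explicit definition of $L_0(\boldsymbol{P})$ as the maximum of the thresholds in Lemmas \ref{s:urestls:lemma:subsec21}, \ref{s:urestls:lemma:subsec31}, \ref{s:urestls:lemma:subsec32} and \ref{s:urestls:lemma:subsec35}, the tail-sum check $\sum_{t\geq T_0} t^{-2}\leq\beta$, and the combinatorial count of $A(\tau^0)$ behind the second displayed inequality, which the paper asserts without any justification at all).
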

\begin{proof}
The regret up to $T_0(L, \boldsymbol{P})$ can be at most $r_{\max} T_0(L, \boldsymbol{P})$. After $T_0(L, \boldsymbol{P})$, since $L(t) \geq C(\boldsymbol{P})$, transition probabilities at exploitation steps sufficiently accurate so that all regret terms in (\ref{s:urestls:eqn:regretbound}) except the regret due to explorations is finite. Since time $t$ is an exploration step whenever $C^k_i(t) < L(t) \log t$, the regret due to explorations is at most
%
$r_{\max} \left(\sum_{k=1}^K |S^k| \right) L(T) \log T (1+T_{\max})$.
%
\end{proof}

\begin{remark}\label{remark:unknowngap}
There is a tradeoff between choosing a rapidly increasing $L(t)$ or a slowly increasing $L(t)$. The regret of AREP up to time $T_0(L, \boldsymbol{P})$ is linear. Since $T_0(L, \boldsymbol{P})$ is a decreasing function in $L(t)$, a rapidly increasing $L(t)$ will have better performance when the time horizon is small. However, in terms of asymptotic performance as $T \rightarrow 
\infty$, $L(t)$ should be a slowly diverging sequence. For example if $L(t) = \log(\log t)$, then the asymptotic regret will be $O(\log(\log t) \log t )$. 
\end{remark}

 \section{AREP with Finite Partitions} \label{sec:extensions_finpar}
 In this section we present a modified version of AREP and prove that it can achieve logarithmic regret without \rdec{Assumption \ref{s:urestls:assump:esuboptimality} or \ref{s:urestls:assump:esuboptimality2}} if the player knows the time horizon $T$. 
We call this variant AREP with finite partitions (AREP-FP).
 
Basically, AREP-FP takes as input the threshold or mixing time $\tau_{\textrm{tr}}$ and then forms the ${\cal G}_{\tau_{\textrm{tr}}}$ partition of the set of information states ${\cal C}$. At each exploitation step (time $t$) AREP-FP solves the estimated AROE based on the transition probability estimate $\hat{\boldsymbol{P}}_t$.  If the information state $(\boldsymbol{s}_t, \boldsymbol{\tau}_t) \in G_l$, the player arbitrarily picks an arm in $O^*(G_l; \hat{\boldsymbol{P}}_t)$, instead of picking an arm in $O(\psi_{\hat{\boldsymbol{P}}_t}((\boldsymbol{s}_t, \boldsymbol{\tau}_t)); \hat{\boldsymbol{P}}_t)$.
%
If the arm selected by the player is indeed in $O(\psi_{\boldsymbol{P}}((\boldsymbol{s}_t, \boldsymbol{\tau}_t)); \boldsymbol{P})$, then it ends up playing optimally at that time step. Else if the selected arm is in $O^*(G_l; \boldsymbol{P})$ but not in $O(\psi_{\boldsymbol{P}}((\boldsymbol{s}_t, \boldsymbol{\tau}_t)); \boldsymbol{P})$ such that $(\boldsymbol{s}_t, \boldsymbol{\tau}_t) \in G_l$, then it plays near-optimally. 
Finally, it plays suboptimally if the selected arm is neither in $O(\psi_{\boldsymbol{P}}((\boldsymbol{s}_t, \boldsymbol{\tau}_t)); \boldsymbol{P})$ nor in $O^*(G_l; \boldsymbol{P})$. By Lemma \ref{lemma:subsetoptimal} we know that when $\tau_{\textrm{tr}}$ is chosen sufficiently large, for any $G_l \in {\cal G}_{\tau_{\textrm{tr}}}$, and $(\boldsymbol{s}, \boldsymbol{\tau})$, $O(\psi_{\boldsymbol{P}}((\boldsymbol{s}, \boldsymbol{\tau})); \boldsymbol{P})$ is a subset of $O^*(G_l; \boldsymbol{P})$. Since the solution to the AROE is a continuous function, by choosing a sufficiently large $\tau_{\textrm{tr}}$ we can control the regret due to near-optimal actions. The regret due to suboptimal actions can be bounded in the same way as in Theorem \ref{s:urestls:theorem:main}. The following theorem gives a logarithmic upper bound on the regret of AREP-FP. 

\begin{theorem} \label{thm:finite_partition}
When the true set of transition probabilities $\boldsymbol{P}$ is such that Assumption \ref{s:urestls:assump:1} is true, for a player using AREP-FP with exploration constant $L$, and threshold $\tau_{\textrm{tr}}$ sufficiently large such that for any $(\boldsymbol{s}, \boldsymbol{\tau}) \in G_l$, $G_l \in {\cal G}_{\tau_{\textrm{tr}}}$, we have $|h_{\boldsymbol{P}}(\psi_{\boldsymbol{P}}((\boldsymbol{s}, \boldsymbol{\tau}))) - h_{\boldsymbol{P}}(\psi^*(G_l; \boldsymbol{P}))|< C/2T$, where $C>0$ is a constant and $T$ is the time horizon, the regret of AREP-FP is upper bounded by
\begin{eqnarray}
 C + \left(L \log T (1+T_{\max}) +6 K S^2_{\max} \beta \right)
\times \sum_{l=1}^{A(\tau_{\textrm{tr}})} \sum_{u \notin O(J_l;\boldsymbol{P})}  \bar{\Delta}(J_l, u; \boldsymbol{P}) + \bar{C}_{\boldsymbol{P}}, \nonumber
\end{eqnarray}
for some $\delta > 0$ which depends on $L$ and $\tau_{\textrm{tr}}$.
\end{theorem}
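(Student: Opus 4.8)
The plan is to start from the exact regret identity (\ref{eqn:defnregret}), $R^{\alpha}_{(\psi_0;\boldsymbol{P})}(T)=\sum_{t=0}^{T-1}E^{\boldsymbol{P}}_{\psi_0,\alpha}[\Delta(\psi_t,U_t;\boldsymbol{P})]$, and to split each summand according to what ALA-FP does at step $t$: (i) exploration steps; (ii) exploitation steps at which the played arm $U_t$ lies in $O^*(G_l;\boldsymbol{P})$, the center-optimal set of the cell $G_l$ containing the \emph{exact} information state; and (iii) exploitation steps at which $U_t\notin O^*(G_l;\boldsymbol{P})$. The reason this decomposition works without Assumption \ref{s:urestls:assump:esuboptimality} is that ALA-FP commits to an arm of $O^*(G_l;\cdot)$ rather than of $O(\psi;\cdot)$, so every exploitation decision is anchored at the single belief $\psi^*(G_l;\cdot)$ instead of the running belief; this removes the oscillation of the optimal action set near the center that forced Assumption \ref{s:urestls:assump:esuboptimality}.

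I would dispatch groups (i) and (iii) with the machinery already built. For (i), Lemma \ref{s:urestls:lemma:subsection1} bounds the expected number of exploration steps by $(\sum_k|S^k|)L\log T(1+T_{\max})$, each contributing at most its gap $\bar\Delta(J_l,\cdot;\boldsymbol{P})$, which produces the logarithmic factor multiplying $\sum_l\sum_u\bar\Delta(J_l,u;\boldsymbol{P})$. For (iii), playing $u\notin O^*(G_l;\boldsymbol{P})$ forces $u\in O^*(G_l;\hat{\boldsymbol{P}}_t)$; since the center suboptimality gap is strictly positive and the AROE solution is continuous in $\boldsymbol{P}$ (Lemma \ref{s:urestls:lemma:ACOEexistence}), this forces $\|\hat{\boldsymbol{P}}_t-\boldsymbol{P}\|_1$ above a fixed threshold. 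I would then reproduce the decomposition of Lemma \ref{s:urestls:lemma:regretbound}, now reading every belief-dependent event at the center, so that the belief error $\|\psi_t-\hat\psi_t\|$ is replaced by $\|\psi^*(G_l;\boldsymbol{P})-\psi^*(G_l;\hat{\boldsymbol{P}}_t)\|_1$, which the perturbation bound of Lemma \ref{intro:lemma:ergodic} controls by $\|\hat{\boldsymbol{P}}_t-\boldsymbol{P}\|_1$. The large-deviation Lemmas \ref{s:urestls:lemma:largedev1} and \ref{s:urestls:lemma:largedev} then make $D_{1,1},D_{1,2},D_{1,3},D_{2,1}$ finite exactly as in Lemmas \ref{s:urestls:lemma:subsec21}, \ref{s:urestls:lemma:subsec31}, \ref{s:urestls:lemma:subsec32}, \ref{s:urestls:lemma:subsec35}, giving the $(8KS^2_{\max}+4/\delta)\beta$ contribution (the $D_{2,2}$ term vanishes because ALA-FP reads the exact cell).

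Group (ii) is the crux, where the new hypothesis enters. For $u\in O^*(G_l;\boldsymbol{P})$, writing $\psi^*:=\psi^*(G_l;\boldsymbol{P})$ and using the AROE together with $u$ attaining ${\cal L}^*$ at $\psi^*$ (so ${\cal L}(\psi^*,u,h_{\boldsymbol{P}},\boldsymbol{P})=g_{\boldsymbol{P}}+h_{\boldsymbol{P}}(\psi^*)$ and ${\cal L}^*(\psi_t,\boldsymbol{P})=g_{\boldsymbol{P}}+h_{\boldsymbol{P}}(\psi_t)$), I obtain
\begin{align*}
\Delta(\psi_t,u;\boldsymbol{P})=\big(h_{\boldsymbol{P}}(\psi_t)-h_{\boldsymbol{P}}(\psi^*)\big)+\big({\cal L}(\psi^*,u,h_{\boldsymbol{P}},\boldsymbol{P})-{\cal L}(\psi_t,u,h_{\boldsymbol{P}},\boldsymbol{P})\big).
\end{align*}
The first bracket is at most $C/2T$ by the theorem's smoothness hypothesis. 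The second is a difference of ${\cal L}(\cdot,u,h_{\boldsymbol{P}},\boldsymbol{P})$ at two beliefs of the same cell; since $\bar r$, $V_{\boldsymbol{P}}$, $T_{\boldsymbol{P}}$ and $h_{\boldsymbol{P}}$ are continuous (Lemma \ref{s:urestls:lemma:ACOEexistence}) and the cell diameter shrinks as $\tau^0\to\infty$, a large enough $\tau^0$ also makes it at most $C/2T$. Hence $\Delta(\psi_t,U_t;\boldsymbol{P})\le C/T$ on every near-optimal step, and summing over at most $T$ such steps bounds group (ii) by $C$; adding the three groups gives the stated bound.

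The main obstacle I expect is the second bracket above: controlling ${\cal L}(\psi^*,u,h_{\boldsymbol{P}},\boldsymbol{P})-{\cal L}(\psi_t,u,h_{\boldsymbol{P}},\boldsymbol{P})$ \emph{uniformly} over the infinitely many information states of a cell. A cell $G_l$ with infinitely many states contains beliefs arbitrarily close to the stationary center but also finitely many farther ones, so I must argue the near-optimal per-step gap is uniformly $O(C/T)$ over the whole cell, using that only finitely many information states lie outside any $\delta$-ball about the stationary center together with continuity and a sufficiently large $\tau^0$. Reconciling the hypothesis, which is stated only as an $h$-smoothness bound, with the need to also control this ${\cal L}$-difference is the delicate point; the rest reuses Theorem \ref{s:urestls:theorem:main} essentially verbatim.
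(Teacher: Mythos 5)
Your proposal follows essentially the same route as the paper's proof: the same three-way split of exploitation steps into optimal ($\Delta=0$), near-optimal ($U_t\in O^*(G_l;\boldsymbol{P})$, each step costing at most $C/T$ and summing to $C$ via the $h$-variation hypothesis), and suboptimal ($U_t\notin O^*(G_l;\boldsymbol{P})$, forced by the non-vanishing gap at the cell centers and bounded by the machinery of Theorem \ref{s:urestls:theorem:main}), plus the logarithmic exploration term from Lemma \ref{s:urestls:lemma:subsection1}. Your explicit decomposition $\Delta(\psi_t,u;\boldsymbol{P})=\bigl(h_{\boldsymbol{P}}(\psi_t)-h_{\boldsymbol{P}}(\psi^*)\bigr)+\bigl({\cal L}(\psi^*,u,h_{\boldsymbol{P}},\boldsymbol{P})-{\cal L}(\psi_t,u,h_{\boldsymbol{P}},\boldsymbol{P})\bigr)$ and the uniformity concern you flag for the second bracket are details the paper's proof passes over silently (it deduces $\Delta\le C/T$ directly from the $h$-hypothesis), and your resolution --- continuity of $\bar r$, $V$, $T_{\boldsymbol{P}}$, $h_{\boldsymbol{P}}$ together with the uniformly shrinking cell diameter for large $\tau^0$ --- is the correct way to fill that in.
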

\begin{proof} 
The regret at time $T$ is upper bounded by Lemma \ref{s:urestls:eqn:regret}. Consider any $t$ which is an exploitation step. Let $l$ be such that $(\boldsymbol{s}_t, \boldsymbol{\tau}_t) \in G_l$. If the selected arm $\alpha(t) \in O(\psi_{\boldsymbol{P}}((\boldsymbol{s}_t, \boldsymbol{\tau}_t)); \boldsymbol{P})$, then an optimal decision is made at $t$, so the contribution to regret in time step $t$ is zero. Next, we consider the case when $\alpha(t) \notin O(\psi_{\boldsymbol{P}}((\boldsymbol{s}_t, \boldsymbol{\tau}_t)); \boldsymbol{P})$. In this case there are two possibilities: either $\alpha(t) \in O^*(G_l; \boldsymbol{P})$ or not. We know that when $O^*(G_l; \hat{\boldsymbol{P}}_t) \subset O^*(G_l; \boldsymbol{P})$ we have $\alpha(t) \in O^*(G_l; \boldsymbol{P})$. Since $|h_{\boldsymbol{P}}(\psi_{\boldsymbol{P}}((\boldsymbol{s}, \boldsymbol{\tau}))) - h_{\boldsymbol{P}}(\psi^*(G_l; \boldsymbol{P}))|< C/2T$ for all $(\boldsymbol{s}, \boldsymbol{\tau}) \in G_l$, we have by (\ref{s:urestls:eqn:suboptimality}),
\begin{align}
\Delta(\psi_t ,\alpha(t) ;\boldsymbol{P}) &= {\cal L}^*(\psi_t,\boldsymbol{P}) - {\cal L}(\psi_t,\alpha(t),h_{\boldsymbol{P}},\boldsymbol{P}) \leq C/T.
\end{align}
Therefore, contribution of a near-optimal action to regret is at most $C/T$. 

Finally, consider the case when $\alpha(t) \notin O^*(G_l; \boldsymbol{P})$. This implies that either the estimated belief $\hat{\psi}_t$ is not close enough to $\psi_t$ or the estimated solution to the AROE, i.e., $\hat{h}_t$, is not close enough to $h_{\boldsymbol{P}}$. Due to the non-vanishing suboptimality gap at any belief vector $\psi^*(G_l; \boldsymbol{P})$, and since decisions of AREP-FP is only based on belief vectors corresponding to $(\boldsymbol{s}, \boldsymbol{\tau}) \in {\cal C}$, the regret due to suboptimal actions can be bounded by Theorem \ref{s:urestls:theorem:main}. We get the regret bound by combining all these results.
\end{proof} 

Note that the regret bound in Theorem \ref{thm:finite_partition} depends on $\tau_{\textrm{tr}}$ which further depends on $T$: $\tau_{\textrm{tr}}$ is chosen so that for every $G_l$ in the partition created by $\tau_{\textrm{tr}}$, the function $h_{\boldsymbol{P}}$ varies by at most $C/2T$. 
Clearly since $h_{\boldsymbol{P}}$ is a continuous function, the variation of $h_{\boldsymbol{P}}$ over $G_l$, i.e., the difference between the maximum and minimum values of $h_{\boldsymbol{P}}$ over $G_l$ decreases with the diameter of $G_l$ on the belief space. Note that there is a term in regret that depends linearly on the number of sets $A(\tau_{\textrm{tr}})$ in the partition generated by $\tau_{\textrm{tr}}$, and $A(\tau_{\textrm{tr}})$ increases proportional to $(\tau_{\textrm{tr}})^K$. This tradeoff is not taken into account in Theorem \ref{thm:finite_partition}. For example, if $(\tau_{\textrm{tr}})^K \geq T$ then the regret bound in Theorem \ref{thm:finite_partition} is of no use. Another approach is to jointly optimize the regret due to suboptimal and near-optimal actions by balancing the number of sets $A(\tau_{\textrm{tr}})$ and the variation of $h_{\boldsymbol{P}}$ on sets in ${\cal G}_{\tau_{\textrm{tr}}}$. For example, given $0<\theta \leq 1$, we can find a $\tau_{\textrm{tr}}(\theta)$ such that for any $(\boldsymbol{s}, \boldsymbol{\tau}) \in G_l$, $G_l \in {\cal G}_{\tau_{\textrm{tr}}(\theta)}$, and $C>0$, we have
\begin{align*}
|h_{\boldsymbol{P}}(\psi_{\boldsymbol{P}}((\boldsymbol{s}, \boldsymbol{\tau}))) - h_{\boldsymbol{P}}(\psi^*(G_l; \boldsymbol{P}))|< \frac{C}{2T^{\theta}}~.
\end{align*} 
Then, the regret due to near-optimal decisions will be proportional to $C T^{1-\theta}$, and the regret due to suboptimal decision will be proportional to $(\tau_{\textrm{tr}}(\theta))^K$. Let 
%
$C = \sup_{\psi \in \boldsymbol{\Psi}} h_{\boldsymbol{P}}(\psi) - \inf_{\psi \in \boldsymbol{\Psi}} h_{\boldsymbol{P}}(\psi)$.
%
Since $T^{1-\theta}$ is decreasing in $\theta$ and $\tau_{\textrm{tr}}(\theta)$ is increasing in $\theta$, there exists $\theta \in [0,1]$, such that 
%
$\theta = \argmin_{\theta' \in [0,1]} |T^{1-\theta} - (\tau_{\textrm{tr}}(\theta))^K|$,
%
%
$|T^{1-\theta} - (\tau_{\textrm{tr}}(\theta))^K| \leq (\tau_{\textrm{tr}}(\theta)+1)^K - (\tau_{\textrm{tr}}(\theta))^K$.
%
If the optimal value of $\theta$ is in $(0,1)$, then given $\theta$, the player can balance the tradeoff, and achieve sublinear regret proportional to $T^{1-\theta}$. However, since the player does not know $\boldsymbol{P}$ initially, it may not know the optimal value of $\theta$. Online learning algorithms for the player to estimate the optimal value of $\theta$ is a future research direction.

%
\section{Numerical Examples and Comparison with Existing Online Learning Algorithms} \label{sec:num}

In this section we compare the performance of AREP with existing online learning algorithms including ABA proposed in \cite{tekin2011approx} and the online version of the myopic policy proposed in \cite{ahmad2009optimality}. 
Although AREP is computationally inefficient, we can solve the AROE approximately by using belief state quantization and relative value iteration. We modify AREP such that instead of solving the AROE at each exploitation step and computing a new estimated optimal policy, it only solves the AROE when the number of observations of transitions out of every state of every arm has increased by $C_{\textrm{inc}}$ percent compared to the last time the AROE is solved. For example, if the minimum number of observations out of any state of any arm was $N_{\textrm{old}}$ the last time AROE is solved, it will be solved again with the new estimated transition probabilities when the minimum number of observations out of any state of any arm exceeds $(1 + C_{\textrm{inc}}/100)N_{\textrm{old}}$. A similar idea is used in \cite{ortner2007logarithmic}, in which the optimal policy is recomputed whenever the confidence interval for transition probabilities is halved. 

We call time steps $t$ in which the AROE is solved by AREP as the {\em computation steps}.
The AROE is solved in the following way. A finite information-state Markov Decision Problem (MDP) is formed based on the estimated transition probabilities $\hat{\boldsymbol{P}}_t$, at a computation step $t$.
The state space of this finite MDP is
\begin{align*}
S^{\textrm{fin}} := \left\{ (\boldsymbol{s}, \boldsymbol{\tau}): s^k \in S^k, \tau^k \in \{1,\ldots, \tau_{\textrm{tr}} \}, \tau^k = 1 \textrm{ for only one } k \in {\cal K} \right\}, 
\end{align*}
and the transition probability matrix
is exactly the same as the transition probability matrix of the original information state MDP,
except when arm $k$ is in state $(s_k, \tau_{\textrm{m}})$ and not played, its state remains $(s_k, \tau_{\textrm{m}})$ instead of evolving to 
$(s_k, \tau_{\textrm{m}}+1)$. Then the optimal average reward policy for this MDP is computed using relative value iteration.
This is in some way similar to AREP-FP but there is no guarantee that this finite state approximation will yield sublinear regret. We call AREP used with this approximation method AREP with finite approximation (AREP-FA).

In this section we will show that AREP-FA achieves results that are much better than that of prior work in online learning, thus even though AREP and AREP-FP may be computationally inefficient, they are practically implementable using approximation algorithms. 
  
Our numerical experiment considers the following setting: $M=2$ (two arms) with state spaces $S^1 = S^2 = \{0,1\}$, and rewards $r^1_0 = r^2_0 =0$, $r^1_1 = r^2_1 =1$. \comc{you originally had in both places $r^1_1=r^2_2=1$; pls double check. CEM: Where? I couldn't remember.} We consider three different sets of transition probability matrices C1, C2 and C3, given in Table \ref{tab:matrices}. In C1 arms are identical and bursty, i.e., $p^k_{01} + p^k_{11} <1$, in C2 the first arm is bursty, while the second arm is not bursty, and in C3, neither arm is bursty. 
We consider the following types of algorithms. 

When the transition probabilities are known, the myopic policy \cite{ahmad2009optimality} chooses the arm with the highest one-step expected reward and is shown to be optimal in terms of the average reward when the arms have only two states and are identical in terms of their state rewards and transition probabilities, and they are bursty. We define the online version of the myopic policy as follows. Similar to AREP, the online myopic policy keeps counters $N^k_{i,j}(t)$ and $C^k_i(t)$, $i,j \in S^k$, $k \in {\cal K}$, which are used to form the transition probability estimates defined in the same way as in Section \ref{single:sec:strong-alg}. 
Whenever there is an arm $k$ and state $i \in S^k$ such that $C^k_i(t) \leq L \log t$, the player explores arm $k$. Otherwise when $C^k_i(t) > L \log t$ for all arms and states, the player chooses the arm with the highest one-step expected reward based on the information state $(\boldsymbol{s}_t, \boldsymbol{\tau}_t)$ and $\hat{\boldsymbol{P}}_t$. 

ABA is an online learning algorithm proposed in \cite{tekin2011approx}, which is a threshold variant of the approximation algorithm proposed in \cite{guha2010approximation} to solve two-state non-identical, bursty restless bandit problems. This algorithm gives an index policy which computes indices for each arm separately based only on the transition probability estimates of that arm and chooses an arm to play by comparing their indices. Given $\epsilon>0$ as an input parameter to the algorithm, it is guaranteed to achieve at least $1/(2+\epsilon)$ of the average expected reward of the optimal policy for two-state, bursty arms.
Unless otherwise stated we assume that the exploration constant $L=30$ for AREP-FA, ABA and the online myopic policy. 

We compute the total reward of AREP-FA, ABA, online myopic policy and the myopic policy with known transition probabilities for $T=20000$. Note that the myopic policy only exploits since it knows the transition probabilities and is thus used as a benchmark.
 We average our results over 100 runs of the algorithms. Average rewards of these algorithms for different transition probabilities are shown in Table \ref{tab:comparison}, when AREP-FA is run with $C_{\textrm{inc}} =5$ and ABA is run with $\epsilon=0.02$. 

We see that for C1, AREP-FA, ABA and online myopic perform roughly equally well and their total reward is very close to the myopic policy which is optimal for this case. 
\newt{It can be verified that Assumption \ref{s:urestls:assump:esuboptimality2} does not hold in this case, since arms are identical. The information state which violates this assumption is $(s,s)(\infty,\infty)$, $s \in \{0,1\}$. This information states is the center information state of a $G_l \in G_{\tau_{\textrm{tr}}}$ 
However, the information state $(s,s)(\infty,\infty)$, $s \in \{0,1\}$ that violates Assumption \ref{s:urestls:assump:esuboptimality2} will never be reached by AREP-FA, thus this assumption will not be violated during the runtime of AREP-FA.}
\comc{maybe need to elaborate in what ways it violates the assumption? as in the counter example? if so please state..} Therefore, although theoretical regret bounds requires this assumption to hold, in this example, AREP-FA performs equally well with the online myopic policy. The difference between the total rewards of the myopic policy and online learning algorithms is due to the exploration steps which are necessary to learn the unknown transition probabilities. 

For C2, since the arms are not identical the myopic policy is suboptimal. In addition, arm 2 is not bursty since $p^2_{01} + p^2_{10} =1$. However, it is almost bursty since a slight decrease in $p^2_{01}$ or $p^2_{10}$ will make it bursty. We see that AREP-FA and ABA perform nearly equally well, while the online myopic policy has a total reward $18\%$ less than AREP-FA and the myopic policy has a total reward $23\%$ less than AREP-FA. 
From the results for C1 and C2 we also see that although ABA is proved to be an approximation algorithm, its actual performance is very close to AREP-FA. It works well for C2 because arm 2 is almost bursty. 

For C3, we see that the online myopic policy performs almost as well as AREP-FA while ABA has a total reward $30\%$ less than AREP-FA. It is expected that ABA should perform poorly since both arms are not bursty, but it came as a surprise that the online myopic policy performs so well. The myopic policy achieves the highest total reward for this case since it knows the transition probabilities and does not need to explore. \newt{We would like to note that for two state arms, bursty and identical arms assumption is a sufficient condition for the optimality of the myopic policy but it is not a necessary condition. There may be other cases in which the myopic policy can also be optimal, maybe including C3. For example, in \cite{liu2013sufficient}, the authors derive sufficient conditions on the optimality of the myopic policy in both finite and infinite horizon discounted cases, both for bursty and non-bursty arms. However, the discussion on the conditions that guarantees optimality of the myopic policy is out of the scope of this paper.}
%
%
%
The results for C1-C3 shows that AREP-FA can potentially significantly outperform the other online learning algorithms.
%

\begin{table}
\vspace{-0.2in}
\centering
\setlength{\tabcolsep}{.3em}
\begin{tabular}{|l|c|c|c|}
\hline
& C1 & C2& C3  \\
\hline
$p^1_{01}$, $p^1_{10}$ &  0.2, 0.2 & 0.05, 0.05 & 0.9, 0.9 \\
\hline
$p^2_{01}$, $p^2_{10}$ & 0.2, 0.2 & 0.5, 0.5 & 0.8, 0.5 \\
\hline
\end{tabular}
\vspace{-0.1in}
\caption{State transition probabilities of the arms. Two bursty arms in C1, one bursty arm in C2, no bursty arm in C3.}
\label{tab:matrices}
\vspace{-0.2in}
\end{table}

\begin{table}
\vspace{-0.2in}
\centering
\setlength{\tabcolsep}{.3em}
\begin{tabular}{|l|c|c|c|c|}
\hline
 & AREP-FA ($C_{\textrm{inc}} = 5$) & ABA & Online myopic & Myopic  \\
\hline
C1 & 12833 & 12784 & 12809 & 13008 \\
\hline
C2 & 13262 & 13220 & 10935 & 10233 \\
\hline
C3 & 14424 & 10076 & 14408 & 14690 \\
\hline
\end{tabular}
\vspace{-0.1in}
\caption{Total rewards of AREP-FA ($C_{\textrm{inc}} = 5$), ABA, online myopic policy and myopic policy (with known transition probabilities) for three different cases C1-C3.}
\label{tab:comparison}
\vspace{-0.2in}
\end{table}
In Table \ref{tab:comp_weak} we compare the time-averaged rewards of AREP-FA with the average reward of the best arm for C1, C2 and C3. Since online learning algorithms proposed in \cite{tekin2012online} and \cite{dai2011non} have sublinear regret with respect to the best arm, and since they learn to play the best arm, their performances will be very poor compared to the performance of AREP-FA. 

\begin{table}
\vspace{-0.2in}
\centering
\setlength{\tabcolsep}{.3em}
\begin{tabular}{|l|c|c|c|}
\hline
& C1 & C2& C3  \\
\hline
AREP-FA ($C_{\textrm{inc}} = 5$) &  0.642 & 0.663 & 0.721 \\
\hline
The best arm & 0.5 & 0.5 & 0.615 \\
\hline
\end{tabular}
\vspace{-0.1in}
\caption{Comparison of the average reward of AREP-FA with the average expected reward of the best arm for cases C1-C3.}
\label{tab:comp_weak}
\vspace{-0.2in}
\end{table}

The number of time steps in which AROE is re-solved by AREP-FA by time $T$ and the total reward of AREP-FA as a function of $C_{\textrm{inc}}$ is given in Table \ref{tab:AREP-FA} for C2.
Although in theory exploration and exploitation should continue indefinitely or until the final time $T$, in order to have sublinear regret with respect to the optimal allocation, from Table \ref{tab:AREP-FA} we see that solving the AROE only when the number of observations that are used to estimate the transition probabilities multiply by some fixed amount works equally well in practice. We see that the performance does not improve as the number of computations increase. 
The reason for this can be that the structure of the optimal policy for the information states that are visited by AREP-FA in run time does not change when $\hat{\boldsymbol{P}}_t$ is slightly different from $\boldsymbol{P}$. By having sufficient exploration steps we guarantee that probability of $\hat{\boldsymbol{P}}_t$ being very different from $\boldsymbol{P}$ is very small each time the AROE is solved.

\begin{table}
\vspace{-0.2in}
\centering
\setlength{\tabcolsep}{.3em}
\begin{tabular}{|l|c|c|c|}
\hline
$C_{\textrm{inc}}$ & 5 & 10 & 50  \\
\hline
Total reward &  13262 & 13229 & 13264 \\
\hline
Average number of computations of AROE & 8.63 & 4.51 & 1.21 \\
\hline
\end{tabular}
\vspace{-0.1in}
\caption{The total reward of AREP-FA and the average number of computations of AROE as a function of $C_{\textrm{inc}}$ for C2.}
\label{tab:AREP-FA}
\vspace{-0.2in}
\end{table}

The average CPU times required to run AREP-FA, ABA and online myopic policy in MATLAB are given in Table \ref{tab:CPU} for different values of $C_{\textrm{inc}}$ for C1. The running time of ABA is the highest since it computes the approximately optimal policy at each exploitation step. From this table we see that although AREP is computationally intractable, it can be practically implemented in much the same way as ABA and the online myopic policy using approximation methods. 

\begin{table}
\vspace{-0.2in}
\centering
\setlength{\tabcolsep}{.3em}
\begin{tabular}{|l|c|c|c|c|c|}
\hline
& AREP-FA ($C_{\textrm{inc}}=5$) & ---- (10) & ---- (50) & ABA & Online myopic policy  \\
\hline
Avg run time (sec) &  2.01 & 1.26 & 0.67 & 17.2 & 0.39 \\
\hline
\end{tabular}
\vspace{-0.1in}
\caption{Average run time of AREP-FA as a function of $C_{\textrm{inc}}$, and the average run times of ABA and the online myopic policy in MATLAB for C1.}
\label{tab:CPU}
\vspace{-0.2in}
\end{table}

Finally, we give the performance of AREP-FA as a function of the exploration constant $L$. Recall that for logarithmic regret bound to hold, $L$ should be chosen large enough for AREP. If no bound on $L$ is known, $L$ should be chosen increasingly over time such that our bounds will hold after some time. However, this increases regret since the number of explorations increases over time. The average reward of AREP-FA as a function of $L$ is given in Table \ref{tab:D} for C2. We see that choosing $L$ very small has a much less negative impact than choosing $L$ very large. This is due to the fact that the transition probabilities can still be learned over exploitation steps, and the loss of reward due to the suboptimal decisions made on exploitation steps as a result of the poorly estimated transition probabilities is not much greater than the loss incurred due to explorations. On the countrary, when $L$ is very large, e.g., $L=300$, there are 13341 exploration steps on average, thus losses from explorations is much more than the gains in exploitation steps. 

\begin{table}
\vspace{-0.2in}
\centering
\setlength{\tabcolsep}{.3em}
\begin{tabular}{|l|c|c|c|c|}
\hline
Exploration constant $L$ & 0.3 & 3 & 30 & 300 \\
\hline
Total reward ($C_{\textrm{inc}}=5)$ & 12264 & 12919 & 13262 & 11463 \\
\hline
\end{tabular}
\vspace{-0.1in}
\caption{The total reward of AREP-FA as a function of $L$ for C2.}
\label{tab:D}
\vspace{-0.2in}
\end{table}

\section{Conclusion} \label{sec:conclusion}
\rdec{We showed for an uncontrolled restless bandit problem there exist online learning algorithms with logarithmic regret uniformly in time with respect to the optimal finite horizon expected total reward policy. This result assumes that the player is able to solve the average reward optimization problem sufficiently often, which can be computationally costly.  In practice, the player can use value iteration and belief state space discretization to obtain an approximate solution.  Furthermore, the player can simply choose to use an explicit/structured policy (e.g., a greedy or one-step lookahead policy) that does not require much computational effort.  In some instances such policies may be optimal or suboptimal with performance guarantees. Provided that the approximate solution holds similar continuity properties, then using AREP (substituting the part solving the AROE with an approximation or an explicit policy) results in a learning algorithm with logarithmic regret w.r.t the chosen substitute.  \newt{We showed via numerical results that the real-time performance of the learning algorithms proposed in this paper is much better than the previous online learning algorithms, and our algorithms can be efficiently implemented using approximation methods.}
}

 \appendices
 \section{Results Regarding Deviations of Estimated Transition Probabilities} \label{s:urestls:app:lemmalargedev1}
 Certain results from the large deviation theory are frequently used, e.g., to relate the accuracy of the player's transition probability estimates to its probability of deviating from the optimal action. We begin with the definition of a uniformly ergodic Markov chain.
\begin{definition} \label{intro:defn:ergodic}
\cite{mitrophanov2005} A Markov chain $X=\{X_t, t \in \mathbb{Z}_+ \}$ on a measurable space $({\cal S}, {\cal B})$, with transition kernel $P(x, {\cal G})$ is uniformly ergodic if there exists constants $\rho<1, C<\infty$ such that for all $x \in {\cal S}$,
\begin{align}
\left\| e_x P^t - \pi \right\| \leq C \rho^t, t \in \mathbb{Z}_+ ~, \label{eqn:ergodicity}
\end{align}
\end{definition}
where $e_x$ is the unit vector indicating that the initial state is $x$, and the total variation norm is used. For finite and countable vectors this corresponds to $l_1$ norm, and the induced matrix norm corresponds to the maximum absolute row sum norm. Clearly, for a finite state Markov chain uniform ergodicity is equivalent to ergodicity. The next is a bound on a perturbation to a uniformly ergodic Markov chain. 

\begin{lemma} \label{intro:lemma:ergodic}
(\cite{mitrophanov2005} Theorem 3.1.) Let $X=\{X_t, t \in \mathbb{Z}_+ \}$ be a uniformly ergodic Markov chain for which (\ref{eqn:ergodicity}) holds. Let $\hat{X}=\{\hat{X}_t, t \in \mathbb{Z}_+ \}$ be the perturbed chain with transition kernel $\hat{P}$. Given the two chains have the same initial distribution, let $\psi_t, \hat{\psi}_t$ be the distribution of $X, \hat{X}$ at time $t$, respectively. Then,
\begin{align}
\left\|\psi_t - \hat{\psi}_t \right\| \leq C_1(P,t) \left\|\hat{P}-P \right\|, \label{eqn:perturbation}
\end{align}
where $C_1(P,t) = \left(\hat{t} + C \frac{\rho^{\hat{t}}-\rho^t}{1-\rho}\right)$ and $\hat{t} = \left\lceil \log_\rho C^{-1} \right\rceil$.
\end{lemma}

Next, the Chernoff-Hoeffding bound is frequently used in our proofs that bounds the difference between the sample mean and the expected reward on distributions with bounded support.
\begin{lemma} \label{intro:lemma:chernoff}
(Chernoff-Hoeffding Bound) Let $X_1,\ldots,X_T$ be random variables with common range [0,1], such that $E[X_t|X_{t-1},\ldots,X_1]=\mu$ for $t=2, 3, \ldots, T$. Let $S_T=X_1+\ldots+X_T$. Then for all $\epsilon \geq 0$
\begin{align*}
P(|S_T - T\mu| \geq \epsilon) \leq 2 e^{\frac{-2\epsilon^2}{T}}.
\end{align*}
\end{lemma}

The following lemma is used to relate the estimate estimated belief state of the player with the true belief state; it gives an upper bound on the difference between the product of two equal-sized sets of numbers in the unit interval, in terms of the sum of the absolute values of the pairwise differences between the numbers taken from each set.
\begin{lemma} \label{s:urestls:lemma:sumbound}
for $\rho_k, \rho_{k}' \in [0,1]$ we have
\begin{align}
|\rho_1 \ldots \rho_K - \rho_{1}' \ldots \rho_{K}'| \leq \sum_{k=1}^K |\rho_k - \rho_{k}'| ~. \label{s:urestls:eqn:sumbound}
\end{align}
\end{lemma}
\begin{proof}
First consider $|\rho_1 \rho_2 - \rho_1' \rho_2'|$ where $\rho_1, \rho_2, \rho_1', \rho_2' \in [0,1]$. Let $\epsilon=\rho_2'-\rho_2$. Then
\begin{align*}
|\rho_1 \rho_2 - \rho_1' \rho_2'| = |\rho_1 \rho_2 - \rho_1'(\rho_2+\epsilon)|
= |\rho_2 (\rho_1 - \rho_1') - \rho_1' \epsilon|
\leq \rho_2 |\rho_1-\rho_1'|+ \rho_1'|\epsilon|. \notag
\end{align*}
But we have
\begin{align}
|\rho_1 - \rho_1'| + |\rho_2 - \rho_2'| &= |\rho_1 - \rho_1'| + |\epsilon|
\geq \rho_2 |\rho_1-\rho_1'|+ \rho_1'|\epsilon|. \notag
\end{align}
Thus
\begin{align}
|\rho_1 \rho_2 - \rho_1' \rho_2'| \leq |\rho_1 - \rho_1'| + |\rho_2 - \rho_2'|. \notag
\end{align}
We now use induction. Clearly (\ref{s:urestls:eqn:sumbound}) holds for $K=1$. Assume it holds for some $K>1$. Then
\begin{align}
|\rho_1 \ldots \rho_{K+1} - \rho_1' \ldots \rho_{K+1}'| &\leq |\rho_1 \ldots \rho_{K} - \rho_1' \ldots \rho_{K}'| + |\rho_{K+1} - \rho_{K+1}'|
\leq \sum_{k=1}^K |\rho_k - \rho_{k}'|. \notag
\end{align}
\end{proof}

 \section{Proof of Lemma \ref{s:urestls:lemma:largedev}} \label{s:urestls:app:lemmalargedev}
 \aug{
Let $L \geq 1/(\epsilon'^2)$.
Let $A^k_{i,t}$ be the random variable which is the next state observed after state $i$ of arm $k$, for the $t$th time the player selects arm $k$ immediately after it had selected arm $k$ and observed state $i$.
Since conditional on being in state $i$, the next state of arm $k$ is drawn from distribution $P^k(\cdot|i)$, the random variables $A^k_{i,1}, A^k_{i,2}, \ldots, A^k_{i,t}$ are i.i.d. with distribution $P^k(\cdot|i)$. 
We have
\begin{align}
\hat{p}^k_{ij,t} = \frac{\sum_{t'=1}^{C^k_i(t)} I \left(  A^k_{i,t'} =j \right)} {C^k_i(t)}  .    \notag
\end{align}
Then using Lemma \ref{intro:lemma:chernoff}, we have
\begin{align}
P\left(|\hat{p}^k_{ij,t} - p^k_{ij}| > \epsilon', {\cal E}_t \right) & = P\left(  \left| \sum_{t'=1}^{C^k_i(t)} I \left(  A^k_{i,t'} =j \right)/ C^k_i(t)  - p^k_{ij}   \right| > \epsilon', {\cal E}_t   \right) \notag \\ 
& = \sum_{C \geq 0} P \left( \left. \left| \frac{\sum_{t'=1}^{C^k_i(t)} I(A^k_i(t') = j) }{C^k_i(t)}     - p^k_{ij} \right| > \epsilon'  ,  {\cal E}_t      \right\vert C^k_i(t) = C  \right) P(C^k_i(t) = C)    \notag \\
& = \sum_{C \geq L \log t}  P \left( \left. \left| \frac{\sum_{t'=1}^{C^k_i(t)} I(A^k_i(t') = j) }{C^k_i(t)}     - p^k_{ij} \right| > \epsilon', {\cal E}_t    \right\vert C^k_i(t) = C  \right) P(C^k_i(t) = C) \notag \\
&\leq \sum_{C \geq L \log t}  P \left( \left. \left| \frac{\sum_{t'=1}^{C} I(A^k_i(t') = j) }{C}     - p^k_{ij} \right| > \epsilon'   \right\vert C^k_i(t) = C  \right) P(C^k_i(t) = C) \notag \\ 
&\leq   \sum_{C \geq L \log t}  2 e^{-2 L \log t \epsilon'^2} P(C^k_i(t) = C) \notag \\
&\leq 2 e^{-2 L \log t \epsilon'^2}  \leq 2/t^2 , \notag
\end{align}
since $C^k_i(t) \geq L \log t$ for all $k \in {\cal K}$, $i \in S^k$ if and only if ${\cal E}_t$ happens.
}

\section{Proof of Lemma \ref{s:urestls:lemma:subsec30}}
 \label{s:urestls:app:lemma:subsec30}
 When the estimated belief is in $J_l$, for any suboptimal action $u$, we have 
\begin{align}
{\cal L}^*(\psi_t, \boldsymbol{P}) - {\cal L}(\psi_t, u, h_{\boldsymbol{P}}, \boldsymbol{P}) \geq \underline{\Delta}. \label{strong:eqn:ineqaroe1}
\end{align}
Let $\epsilon < \underline{\Delta}/4$. When ${\cal F}_t(\epsilon)$ occurs, we have
\begin{align}
\left| {\cal I}_t(\hat{\psi}_t,u) - {\cal L}(\hat{\psi}_t, u, h_{\boldsymbol{P}}, \hat{\boldsymbol{P}}_t ) \right| \leq \epsilon, \label{strong:eqn:ineqaroe3}
\end{align}
for all $u \in U$.
Since $T_{\boldsymbol{P}}(\psi,y,u)$ is continuous in $\boldsymbol{P}$, and $h_{\boldsymbol{P}}(\psi)$ is continuous in $\psi$, there exists $\delta_e >0$
such that $|| \hat{\boldsymbol{P}}_t - \boldsymbol{P}||_1 < \delta_e$ implies that
\begin{align}
\left| {\cal L}(\hat{\psi}_t, u, h_{\boldsymbol{P}}, \boldsymbol{P} ) - {\cal L}(\hat{\psi}_t, u, h_{\boldsymbol{P}}, \hat{\boldsymbol{P}}_t ) \right| \leq \underline{\Delta}/4, \label{strong:eqn:ineqaroe4}
\end{align}
for all $u \in U$. 
Let $u^* \in O(J_l; \boldsymbol{P})$. Using (\ref{strong:eqn:ineqaroe1}), (\ref{strong:eqn:ineqaroe3}) and (\ref{strong:eqn:ineqaroe4}), we have
\begin{align}
{\cal I}(\hat{\psi}_t, u^*) &\geq {\cal L}(\hat{\psi}_t, u^*, h_{\boldsymbol{P}}, \hat{\boldsymbol{P}}_t ) -\epsilon \notag \\
&\geq {\cal L}(\hat{\psi}_t, u^*, h_{\boldsymbol{P}}, \boldsymbol{P} ) -\epsilon - \underline{\Delta}/4 \notag \\
&= {\cal L}^*(\hat{\psi}_t, \boldsymbol{P}) -\epsilon - \underline{\Delta}/4 \notag \\
&\geq {\cal L}(\hat{\psi}_t, u, h_{\boldsymbol{P}}, \boldsymbol{P}) + 3 \underline{\Delta}/4 -\epsilon \notag \\
&\geq {\cal L}(\hat{\psi}_t, u, h_{\boldsymbol{P}}, \hat{\boldsymbol{P}}_t ) + \underline{\Delta}/2 -\epsilon \notag \\
&\geq {\cal I}(\hat{\psi}_t, u) + \underline{\Delta}/2 -2\epsilon \notag \\
&> {\cal I}(\hat{\psi}_t, u) ~. \notag
\end{align}
Therefore, we have
\begin{align}
\left\{\hat{\psi}_t \in J_l, U_t=u, || \hat{\boldsymbol{P}}_t - \boldsymbol{P}||_1 < \delta_e, {\cal E}_t, {\cal F}_t \right\} = \emptyset ~. \label{strong:eqn:ineqaroe5}
\end{align}
Recall that for any $u \notin O(J_l; \boldsymbol{P})$,
\begin{align}
E^{\boldsymbol{P}}_{\psi_0,\alpha}[ D_{1,1}(T,\epsilon,J_l,u)] 
&= \sum_{t=1}^{T} P \left( \hat{\psi}_t \in J_l, U_t=u, {\cal E}_t, {\cal F}_t \right) \notag \\
&= \sum_{t=1}^{T} P \left( \hat{\psi}_t \in J_l, U_t=u, || \hat{\boldsymbol{P}}_t - \boldsymbol{P}||_1 < \delta_e, {\cal E}_t, {\cal F}_t \right) \notag \\
&+ \sum_{t=1}^{T} P \left( \hat{\psi}_t \in J_l, U_t=u, || \hat{\boldsymbol{P}}_t - \boldsymbol{P}||_1 \geq \delta_e, {\cal E}_t, {\cal F}_t \right) \notag \\
&\leq \sum_{t=1}^{T} P \left( ||\hat{\boldsymbol{P}}_t - \boldsymbol{P}||_1 \geq \delta_e, {\cal E}_t \right), \label{strong:eqn:ineqaroe6}
\end{align}
where (\ref{strong:eqn:ineqaroe6}) follows from (\ref{strong:eqn:ineqaroe5}).
Therefore for any $u \notin O(J_l; \boldsymbol{P})$,
\begin{align}
E^{\boldsymbol{P}}_{\psi_0,\alpha} \left[D_{1,1}(T,\epsilon,J_l,u)\right] &\leq \sum_{t=1}^{T} P\left(\left\|\hat{\boldsymbol{P}}_t - \boldsymbol{P}\right\|_1 \geq \delta_e, {\cal E}_t\right) \notag \\
&\leq \sum_{t=1}^{T} P\left( \left\{ |\hat{p}^k_{ij,t} - p^k_{ij}| \geq \frac{\delta_e}{K S^2_{\max}}, \textrm{ for some } k \in {\cal K},~ i,j \in S^k \right\}, {\cal E}_t \right) \notag \\
&\leq \sum_{t=1}^{T} \sum_{k=1}^K \sum_{(i,j) \in S^k \times S^k} P\left(|\hat{p}^k_{ij,t} - p^k_{ij}| \geq \frac{\delta_e}{K S^2_{\max}}, {\cal E}_t\right) \notag \\
&\leq 2 K S^2_{\max} \beta, \notag
\end{align}
for \aug{$L \geq K^2 S^4_{\max}/ \delta_e^2$}, where the last inequality follows from Lemma \ref{s:urestls:lemma:largedev}.

 \section{Proof of Lemma \ref{s:urestls:lemma:subsec34}} \label{s:urestls:app:subsec34}
Since $h_{\tilde{\boldsymbol{P}}}$ is continuous in $\psi$ by Lemma \ref{s:urestls:lemma:ACOEexistence} for any $\tilde{\boldsymbol{P}}$ such that Assumption \ref{s:urestls:assump:1} holds, and since $\bar{r}(\psi), V_{\tilde{\boldsymbol{P}}}, T_{\tilde{\boldsymbol{P}}}$ are continuous in $\tilde{\boldsymbol{P}}$, we have for any $\psi \in \boldsymbol{\Psi}$:
\begin{align}
&g_{\hat{\boldsymbol{P}}} + h_{\hat{\boldsymbol{P}}}(\psi)
= \argmax_{u \in U} \left\{  \bar{r}(\psi, u)
+ \sum_{y \in S^u} V_{\hat{\boldsymbol{P}}}(\psi,y, u) 
h_{\hat{\boldsymbol{P}}} (T_{\hat{\boldsymbol{P}}}(\psi, y, u) )  \right\}  \notag \\
& = \argmax_{u \in U} \left\{  \bar{r}(\psi, u) 
+ \sum_{y \in S^u} V_{\boldsymbol{P}}(\psi, y, u) 
h_{\hat{\boldsymbol{P}}} (T_{\boldsymbol{P}}(\psi, y, u) )
+ q (\boldsymbol{P}, \hat{\boldsymbol{P}}, \psi, u)
\right\}, \label{eqn:perturbedAROE1}
\end{align}
for some function $q$ such that 
%
$\lim_{\hat{\boldsymbol{P}} \rightarrow \boldsymbol{P}} q (\boldsymbol{P}, \hat{\boldsymbol{P}}, \psi, u) = 0, ~~\forall \psi \in \boldsymbol{\Psi}, u \in U$.
%
Let $\bar{r}(\boldsymbol{P}, \hat{\boldsymbol{P}}, \psi, u) = \bar{r}(\psi, u) + q (\boldsymbol{P}, \hat{\boldsymbol{P}}, \psi, u)$. We can write (\ref{eqn:perturbedAROE1}) as 

\begin{align}
g_{\hat{\boldsymbol{P}}} + h_{\hat{\boldsymbol{P}}}(\psi) 
= \argmax_{u \in U} \left\{ \bar{r}(\boldsymbol{P}, \hat{\boldsymbol{P}}, \psi, u)
+ \sum_{y \in S^u} V_{\boldsymbol{P}}(\psi, y, u) 
h_{\hat{\boldsymbol{P}}} (T_{\boldsymbol{P}}(\psi, y, u) )
  \right\}~. \label{eqn:perturbedAROE2}
\end{align}
Note that (\ref{eqn:perturbedAROE2}) is the AROE for a system with set of transition probability matrices $\boldsymbol{P}$, and perturbed rewards $ \bar{r}(\boldsymbol{P}, \hat{\boldsymbol{P}}, \psi, u)$. Since 
%
$\lim_{\hat{\boldsymbol{P}} \rightarrow \boldsymbol{P}} r (\boldsymbol{P}, \hat{\boldsymbol{P}}, \psi, u) = \bar{r}(\psi, u), ~~\forall \psi \in \boldsymbol{\Psi}, u \in U$,
%
we expect $h_{\hat{\boldsymbol{P}}}$ to converge to $h_{\boldsymbol{P}}$.  Below we prove that this is true. Let $F_{\hat{P}}$ denote the dynamic programming operator defined in (\ref{s:urestls:eqn:DPOP}), with transition probabilities $\boldsymbol{P}$ and rewards $r(\boldsymbol{P}, \hat{\boldsymbol{P}}, \psi, u)$. Then, by S-1 of Lemma \ref{s:urestls:lemma:ACOEexistence}, there exists a sequence of functions $v_{0, \hat{\boldsymbol{P}}}, v_{1, \hat{\boldsymbol{P}}} , v_{2, \hat{\boldsymbol{P}}}, \ldots$ such that $v_{0, \hat{\boldsymbol{P}}} = 0$, $ v_{l, \hat{\boldsymbol{P}}} = F_{\hat{\boldsymbol{P}}} v_{l-1, \hat{\boldsymbol{P}}}$ and another sequence of functions $v_{0, \boldsymbol{P}}, v_{1, \boldsymbol{P}} , v_{2, \boldsymbol{P}}, \ldots$ such that $v_{0, \boldsymbol{P}} = 0$, $ v_{l, \boldsymbol{P}} = F_{\boldsymbol{P}} v_{l-1, \boldsymbol{P}}$, for which
\begin{align}
\lim_{l \rightarrow \infty} v_{l, \hat{\boldsymbol{P}}} = h_{\hat{\boldsymbol{P}}}, \label{eqn:perturbediteration} \\
\lim_{l \rightarrow \infty} v_{l, \boldsymbol{P}} = h_{\boldsymbol{P}}, \label{eqn:iteration} 
\end{align}
uniformly in $\psi$.
Let
\begin{align*}
q_{\max}(\boldsymbol{P}, \hat{\boldsymbol{P}}) := \max_{u \in U, \psi \in \boldsymbol{\Psi}} |q (\boldsymbol{P}, \hat{\boldsymbol{P}}, \psi, u)| ~.
\end{align*}
We have
\begin{align*}
 v_{1, \boldsymbol{P}}(\psi) &= \max_{u \in U} \left\{  \bar{r}(\psi, u)   \right\} \\
 v_{1, \hat{\boldsymbol{P}}}(\psi) &= \max_{u \in U} \left\{  \bar{r}(\psi, u) + q (\boldsymbol{P}, \hat{\boldsymbol{P}}, \psi, u)   \right\}.
\end{align*}
\aug{
Next, we prove that 
\begin{align}
|v_{l, \boldsymbol{P}}(\psi) -  v_{l, \hat{\boldsymbol{P}}}(\psi)|   \leq  l q_{\max}(\boldsymbol{P}, \hat{\boldsymbol{P}})       \label{eqn:inductionproof}
\end{align}
for all $\psi$ by induction.
Clearly, we have for all $\psi \in \boldsymbol{\Psi}$
\begin{align}
| v_{1, \boldsymbol{P}}(\psi) -  v_{1, \hat{\boldsymbol{P}}}(\psi)|   \leq  q_{\max}(\boldsymbol{P}, \hat{\boldsymbol{P}}) .  \notag
\end{align}
Let 
\begin{align}
 \phi_{l,\hat{\boldsymbol{P}}}(\psi,u) :=   \bar{r}(\psi, u) + \sum_{ y \in S^u } V_{\boldsymbol{P}}(\psi,y,u) v_{l-1,\hat{\boldsymbol{P}}} (T_{\boldsymbol{P}}(\psi,y,u)) .   \notag
\end{align}
Assume that for all $\psi \in \boldsymbol{\Psi}$, 
\begin{align}
| v_{l, \boldsymbol{P}}(\psi) -  v_{l, \hat{\boldsymbol{P}}}(\psi)|   \leq  l q_{\max}(\boldsymbol{P}, \hat{\boldsymbol{P}})   .  \notag
\end{align}
This implies that for all $u \in {\cal U}$
\begin{align}
|  \phi_{l+1,\hat{\boldsymbol{P}}}(\psi,u)   -  \phi_{l+1,\boldsymbol{P}}(\psi,u) | 
& \leq q_{\max}(\boldsymbol{P}, \hat{\boldsymbol{P}})  + \sum_{ y \in S^u } V_{\boldsymbol{P}}(\psi,y,u) 
| v_{l, \boldsymbol{P}}(\psi) -  v_{l, \hat{\boldsymbol{P}}}(\psi)|    \notag \\
&\leq (l+1) q_{\max}(\boldsymbol{P}, \hat{\boldsymbol{P}}) \notag,
\end{align}
hence,
\begin{align}
| v_{l+1, \boldsymbol{P}}(\psi) -  v_{l+1, \hat{\boldsymbol{P}}}(\psi)|  
 & = |\max_{u \in U} \{  \phi_{l+1,\boldsymbol{P}}(\psi,u) \}   - \max_{u \in U } \{  \phi_{l+1,\hat{\boldsymbol{P}}}(\psi,u) \}|
\leq  (l+1) q_{\max}(\boldsymbol{P}, \hat{\boldsymbol{P}}) .     \notag
\end{align}

Fix an $\epsilon>0$. Let $B_{\eta_1(\epsilon)}(\boldsymbol{P})$ be the compact ball with radius $\eta_1(\epsilon)$ centered at $\boldsymbol{P}$ for which Assumption 1 holds for every $\hat{\boldsymbol{P}} \in B_{\eta_1(\epsilon)}(\boldsymbol{P})$  (Since Assumption 1 holds for $\boldsymbol{P}$, existence of such a compact ball is guaranteed). 
From (\ref{eqn:perturbediteration}) and (\ref{eqn:iteration}) for any $\epsilon>0$, there exists $\eta_1(\epsilon)>0$ such that for all $\hat{\boldsymbol{P}} \in B_{\eta_1(\epsilon)}(\boldsymbol{P})$ there exists an integer
$N_1(\hat{\boldsymbol{P}} )$ such that for all $l > N_1(\hat{\boldsymbol{P}} )$, we have
\begin{align}
|v_{l,\hat{\boldsymbol{P}}}(\psi) - h_{\hat{\boldsymbol{P}}} (\psi)    |   & \leq \epsilon/3 .   \notag 
\end{align}
Let $l^*$ be the smallest integer that is greater than $\max_{\hat{\boldsymbol{P}} \in B(\eta_1(\epsilon)) }N_1(\hat{\boldsymbol{P}} )$. 
Since $\lim_{\hat{\boldsymbol{P}} \rightarrow \boldsymbol{P}} q_{\max}(\boldsymbol{P}, \hat{\boldsymbol{P}})  = 0 $, there exists $\eta_2(\epsilon) < \eta_1(\epsilon)$ such that for all $\hat{\boldsymbol{P}} \in B_{\eta_2(\epsilon)}(\boldsymbol{P})$ we have
\begin{align}
q_{\max}(\boldsymbol{P}, \hat{\boldsymbol{P}}) \leq \epsilon/(3 l^*) ,    \notag
\end{align}
which implies from (\ref{eqn:inductionproof}) that 
\begin{align}
| v_{l^*, \boldsymbol{P}}(\psi) -  v_{l^*, \hat{\boldsymbol{P}}}(\psi)|   \leq \epsilon/3 .    \notag
\end{align}
This implies that for any $\epsilon>0$, there exists $\eta_2(\epsilon) >0$ such that for all $\hat{\boldsymbol{P}} \in B_{\eta_2(\epsilon)}(\boldsymbol{P})$, $\psi \in \boldsymbol{\Psi}$ we have
\begin{align}
| h_{\boldsymbol{P}}(\psi) -  h_{\hat{\boldsymbol{P}}}(\psi)|
\leq | h_{\boldsymbol{P}}(\psi) - v_{l^*, \boldsymbol{P}}(\psi) | 
+ |v_{l^*, \hat{\boldsymbol{P}}}(\psi) - v_{l^*, \boldsymbol{P}}(\psi)| + |v_{l^*, \hat{\boldsymbol{P}}}(\psi) -  h_{\hat{\boldsymbol{P}}}(\psi)| 
 < \epsilon, \notag
\end{align}
To complete the proof, let $\varsigma(\epsilon) >0$ be the largest number such that $||P^k - \hat{P}^k||_1 < \varsigma(\epsilon) $, for all $k \in {\cal K}$ implies that $\hat{\boldsymbol{P}} \in B_{\eta_2(\epsilon)}(\boldsymbol{P})$. 
}

\section{Proof of Lemma \ref{s:urestls:lemma:subsec35}}
\label{s:urestls:app:lemma:subsec35}
Let $\varsigma = \varsigma(\epsilon)$, given in Lemma \ref{s:urestls:lemma:subsec34}.  We have by Lemma \ref{s:urestls:lemma:subsec34},
\begin{align*}
\left\{ \left\|P^k - \hat{P}^k_t\right\|_1 < \varsigma, \forall k \in {\cal K}  \right\} \subset \left\{ \left\|h_{\boldsymbol{P}} - h_t \right\|_\infty < \epsilon \right\} ~,
\end{align*}
which implies
\begin{align*}
\left\{ \left\|P^k - \hat{P}^k_t\right\|_1 \geq \varsigma, \textrm{ for some } k \in {\cal K} \right\} \supset \left\{ \left\|h_{\boldsymbol{P}} - h_t \right\|_\infty \geq \epsilon \right\} ~.
\end{align*}
Then 
\begin{align}
 E^{\boldsymbol{P}}_{\psi_0,\alpha} [D_{1,2}(T,\epsilon)] &= E^{\boldsymbol{P}}_{\psi_0,\alpha} \left[\sum_{t=1}^{T} I({\cal E}_t, {\cal F}^c_t(\epsilon))\right] \notag \\
&\leq \sum_{t=1}^{T} P \left(\left\|P^k - \hat{P}^k_t\right\|_1 \geq \varsigma, \textrm{ for some } k \in {\cal K}, {\cal E}_t \right) \notag \\
&\leq \sum_{k=1}^K \sum_{(i,j) \in S^k \times S^k} \sum_{t=1}^{T} P\left(|p^k_{ij} - \hat{p}^k_{ij,t}| > \frac{\varsigma}{S^2_{\max}}, {\cal E}_t \right) \notag \\
&\leq 2 K S^2_{\max} \beta ~, \notag
\end{align}
for $L \geq S^4_{\max}/ \varsigma^2$.

%
\section{Proof of Lemma \ref{s:urestls:lemma:subsec21}}
\label{s:urestls:app:lemma:subsec21}
Consider $t>0$
\begin{align}
| (\hat{\psi}_t)_{\boldsymbol{x}} - (\psi_t)_{\boldsymbol{x}} | &= \left| \prod_{k=1}^K \left((\hat{P}^k_t)^{\tau^k} e^k_{s^k} \right)_{x^k} - \prod_{k=1}^K \left((P^k)^{\tau_k} e^k_{s^k} \right)_{x^k} \right| \notag \\
&\leq \sum_{k=1}^K \left|\left((\hat{P}^k_t)^{\tau^k} e^k_{s^k} \right)_{x^k}-\left((P^k)^{\tau_k} e^k_{s^k} \right)_{x^k} \right| \notag \\
&\leq \sum_{k=1}^K \left\| (\hat{P}^k_t)^{\tau^k} e^k_{s^k} - (P^k)^{\tau^k} e^k_{s^k}  \right\|_1 \notag \\
&\leq C_1(\boldsymbol{P}) \sum_{k=1}^K \left\| \hat{P}^k_t - P^k \right\|_1, \label{s:urestls:eqn:subsec21}
\end{align}
where last inequality follows from Lemma \ref{intro:lemma:ergodic}. By (\ref{s:urestls:eqn:subsec21})
\begin{align}
\left\| \hat{\psi}_t - \psi_t \right\|_1 \leq |S^1| \ldots |S^K| C_1(\boldsymbol{P}) \sum_{k=1}^K \left\| \hat{P}^k_t - P^k \right\|_1. \notag
\end{align}
Thus we have
\begin{align}
& P\left( \left\| \hat{\psi}_t - \psi_t \right\|_1 > \epsilon, {\cal E}_t \right) \notag \\  
&\leq  P\left(\sum_{k=1}^K \left\| \hat{P}^k_t - P^k \right\|_1 > \epsilon/(|S^1| \ldots |S^K| C_1(\boldsymbol{P})), {\cal E}_t \right) \notag \\
&\leq \sum_{k=1}^K P\left(\left\| \hat{P}^k_t - P^k \right\|_1 > \epsilon/(K|S^1| \ldots |S^K| C_1(\boldsymbol{P})), {\cal E}_t \right) \notag \\
&\leq \sum_{k=1}^K \sum_{(i,j) \in S^k \times S^k} P\left(|\hat{p}^k_{ij,t} - p^k_{ij}| > 
\frac{\epsilon}{(K S^2_{\max} |S^1| \ldots |S^K| C_1(\boldsymbol{P}))}, {\cal E}_t \right) \notag \\
&\leq 2 K S^2_{\max} \frac{1}{t^2}, \notag
\end{align}
where last inequality follows from Lemma \ref{s:urestls:lemma:largedev} since \aug{$L \geq (K S^2_{\max} |S^1| \ldots |S^K| C_1(\boldsymbol{P}))^2/ \epsilon^2$}. Then,
\begin{align*}
E^{\boldsymbol{P}}_{\psi_0,\alpha}[D_{2,1}(T,\epsilon)] &= \sum_{t=1}^{T} P_{\psi_0,\alpha} \left(\left\| \psi_t-\hat{\psi}_t \right\|_1 > \epsilon, {\cal E}_t \right) 
\leq  2 K S^2_{\max} \beta.
\end{align*}


\bibliographystyle{IEEEtran}
\bibliography{cemreferences}

\end{document}